\newtheorem{theo}{Theorem}[section]
\newtheorem{lemma}[theo]{Lemma}
\newtheorem{prop}[theo]{Proposition}
\newtheorem{coro}[theo]{Corollary}
\newcommand{\proof}{\noindent{\it Proof: }}
\newcommand{\proofbox}{\hfill \mbox{ $\Box$}\\}
\newcommand{\R}{{\mathbb R}}
\newcommand{\N}{{\mathbb N}}
\newcommand{\E}{\mathbb{E}}
\newcommand{\PP}{\mathbb{P}}
\newcommand{\NL}{\mathcal{N}(L)}
\newcommand{\mq}{{\mu}_q}
\newcommand{\Hde}{\mathcal{H}^{d-1}}
\date{}
\title{The mean width of random polytopes circumscribed around a convex body}
\author{K\'aroly J. B\"or\"oczky\footnote{Supported by OTKA grants 068398 
and 049301 and by the EU Marie Curie projects BudAlgGeo (MTKD-CT-2004-002988)
and DiscConvGeo (MTKD-CT-2005-014333), and by the J\'anos Bolyai Research 
Scholarship of the Hungarian
Academy of Sciences.}, Ferenc Fodor
\footnote{Supported by the J\'anos Bolyai Research Scholarship of the Hungarian
Academy of Sciences and by OTKA grant 068398.}, 
Daniel Hug\footnote{Supported by the 
European Network PHD, FP6 Marie Curie Actions, RTN, Contract MCRN -511953.\newline
{\bf Keywords:} Random polytope, random polyhedral set, mean width, approximation, 
weighted volume approximation, affine surface area\newline
{\bf Subjclass[2000]:} Primary 52A22, Secondary 60D05, 52A27
}}
\begin{document}


\maketitle

\begin{abstract}
Let $K$ be a $d$-dimensional convex body, and let $K^{(n)}$ be the intersection
of $n$ halfspaces containing $K$ whose bounding hyperplanes are 
independent and 
identically distributed. Under suitable distributional assumptions, 
we prove an asymptotic formula for the expectation of the
difference of the mean widths of $K^{(n)}$ and $K$, and 
another asymptotic formula 
for the expectation of the number of facets of $K^{(n)}$. These results
are achieved by establishing an asymptotic result on  
weighted volume approximation  of $K$ and by ``dualizing'' it using
polarity. 
\end{abstract}

\section{Introduction}

Let $K$ be a convex body (compact convex set with nonempty interior) in
$d$-dimensional Euclidean space $\R^d$. The convex hull $K_{(n)}$ of $n$ 
independent random points in $K$ chosen according to the uniform 
distribution is a common model of a  random polytope contained in $K$. 
The famous four-point problem of Sylvester \cite{S1864} is the 
starting point of an extensive investigation of random polytopes of 
this type. Beside specific probabilities as in Sylvester's problem, 
important objects of study are expectations, variances and distributions of various 
geometric functionals associated with $K_{(n)}$. Typical examples of such 
functionals are volume, other intrinsic
volumes, and the number of $i$-dimensional faces. 
In their ground-braking papers \cite{RS63, RS64}, R\'enyi and Sulanke  considered 
random polytopes in the Euclidean plane and proved asymptotic results for the expectations 
of basic functionals of random polytopes in a convex domain $K$ in the cases where $K$ is 
sufficiently smooth or a convex polygon.  
Since then most results have been in the form of asymptotic formulae as the number $n$ of 
random points tends to infinity. In the last three decades, much effort has been devoted to
exploring the properties of this particular model of a random polytope contained in a $d$-dimensional
convex body $K$. For instance, for a sufficiently smooth convex body $K$, 
asymptotic formulae were proved
for the expectation of the mean width
difference $W(K)-W(K_{(n)})$ by Schneider and  Wieacker \cite{ScW80}, 
and for the volume difference $V(K)-V(K_{(n)})$ by B\'ar\'any \cite{Bar92}. 
The assumption of smoothness was
relaxed in the case of the mean width by 
B\"or\"oczky, Fodor, Reitzner and V\'{\i}gh \cite{BFRV},
and removed by
Sch\"utt \cite{Sch94} in the case of the volume.
Recently, even
variance estimates, laws of large numbers, and central limit theorems 
have been proved in a sequence of contributions, for instance by B\'ar\'any, Reitzner, and Vu. 
For more details on the current state-of-the-art of this line of research, 
see the survey papers by Weil and Wieacker \cite{WW93}, 
 Gruber \cite{Gru97} and 
Schneider \cite{Sch04}, and the recent monograph of Schneider and Weil \cite{SW}.

In a third paper, R\'enyi and Sulanke \cite{RS68} considered a ``dual'' 
model of a random polytope contained in a given convex body $K$ (a random inscribed polytope), 
that is a random polytope containing a given convex body (a random circumscribed polytope). 
Subsequently, this approach has not received nearly as much
attention as the ``inscribed case''. 
There are various ways of producing circumscribed
random polytopes containing a given convex body. 
In this paper, we consider a model in which
the circumscribed polytope arises as an 
intersection of closed halfspaces whose bounding hyperplanes are randomly chosen 
hyperplanes. The rough description of the probability model is the following, 
it is described more precisely in Section \ref{Sec:2}, a more general setting is 
provided in Section \ref{secpolar}. In Euclidean space $\R^d$, we consider hyperplanes that intersect
the radius one parallel domain of a given convex body $K$ but miss the interior of $K$, 
and we use the restriction of the 
(suitably normalized) Haar measure on the set of hyperplanes 
in $\R^d$ to provide an associated  
probability measure. For  $n$ independent random hyperplanes $H_1,\ldots,H_n$ chosen
according to this distribution, the 
intersection of the closed halfspaces bounded by $H_1,\ldots,H_n$ and 
containing $K$ determines a circumscribed random polyhedral set 
containing  $K$ (which might be unbounded). 
The main goal of this article is to find asymptotic formulae for 
the expectation of the difference of the mean widths of a random circumscribed polytope 
and the given convex body $K$, 
and for the expectation of the number of facets of a circumscribed random polyhedral set. 
These (and more general) results will be achieved by establishing general results on weighted volume 
approximation of a given convex body by inscribed random polytopes. In all these results, no regularity 
or curvature assumptions on $K$ are requird.

As for earlier results, we mention the paper \cite{Zie70} 
by Ziezold  who investigated circumscribed polygons in the plane, 
and the doctoral dissertation \cite{Kal90} 
of Kaltenbach  who proved asymptotic formulae for 
the expectations of the volume difference and the
number of vertices of circumscribed random polytopes around a 
convex body under the assumption that
the boundary of $K$ is sufficiently smooth. 
Recently, B\"or\"oczky and Schneider \cite{BS08} established 
upper and lower bounds for the expectation of the mean width 
difference for general convex bodies. 
Furthermore, they also proved asymptotic formulae for the 
expected number of vertices and facets of $K^{(n)}$, 
and an asymptotic formula for the expectation of the mean width difference, 
under the assumption that the parent body $K$ is a simplicial 
polytope with $r$ facets. 

In \cite{BR04},  B\"or\"oczky and Reitzner 
discuss a different model of a random circumscribed polytope 
where $n$ independent random points are chosen from the boundary of $K$, 
and the intersection of the supporting halfspaces of $K$ at these points 
is the random polyhedral set under consideration. This 
framework is again dual to the one considered by Sch\"utt and Werner (see \cite{SW}) who study  
the expected volume of the convex hull of $n$ independent random points chosen 
from the boundary of a sufficiently regular convex body.

\section{The probability space and the main goal}\label{Sec:2}

Let us first describe the setting for stating our results on circumscribed 
random polyhedral sets.  
Throughout this article, $K$ will denote a compact convex set 
with interior points (a convex body) in $d$-dimen\-sion\-al 
Euclidean space ${\mathbb R}^d$ ($d\ge 2$). We write $\langle\cdot\,,\cdot\rangle$ 
for the scalar product and $\|\cdot\|$ for the norm in $\R^d$. 
For any notions on convexity, we refer to the
monographs by Schneider \cite{Sch93} or  by Gruber \cite{Gru07}. 
Let $V$ denote volume, and let 
${\cal H}^{j}$ denote the $j$-dimensional Hausdorff measure. 
The unit ball of ${\mathbb R}^d$ with center at the
origin is denoted by $B^d$, and $S^{d-1}$ is its boundary. We put $\alpha_d:=V(B^d)$  
and $\omega_d:={\cal H}^{d-1}(S^{d-1})=d\alpha_d$. 
The parallel body of $K$ of radius $1$ 
is $K_1:=K+B^d$. Let ${\mathcal H}$ denote the space (with its usual
topology) of hyperplanes in ${\mathbb R}^d$, and let ${\mathcal H}_K$ be 
 the subspace of hyperplanes meeting $K_1$ but not the interior
of $K$. For $H\in {\mathcal H}_K$, the closed halfspace bounded by
$H$ that contains $K$ is denoted by $H^-$. Let $\mu$ denote 
the motion invariant Borel measure on ${\mathcal H}$, normalized
so that $\mu(\{H\in {\mathcal H}: H\cap M\not=\emptyset\})$ is the
mean width $W(M)$ of $M$, for every convex body $M\subset {\mathbb
R}^d$. Let $2\mu_K$ be the  restriction of $\mu$ to ${\mathcal
H}_K$. Since $\mu({\mathcal H}_K)= W(K+B^d)-W(K)= W(B^d)=2$, the
measure $\mu_K$ is a probability measure. For $n\in{\mathbb N}$,
let $H_1,\dots,H_n$ be independent random hyperplanes in ${\mathbb
R}^d$, i.e.\ independent ${\mathcal H}$-valued random variables on some probability
space $(\Omega,{\bf A},{\mathbb P})$, each with distribution
$\mu_K$. The possibly unbounded intersection
$$
K^{(n)}:= \bigcap_{i=1}^n H_i^-
$$
of the halfspaces $H^-_i$, with $H_i\in \mathcal{H}_K$ for $i=1,\ldots,n$, is a random
polyhedral set. A major aim of the present work is to 
investigate ${\mathbb E}W(K^{(n)}\cap K_1)$, where ${\mathbb E}$ denotes mathematical
expectation. The intersection with $K_1$ is considered, since $K^{(n)}$ 
is unbounded with positive probability. Instead of ${\mathbb E}W(K^{(n)}\cap K_1)$, 
we could consider ${\mathbb E}_1
W(K^{(n)})$, the conditional expectation of $W(K^{(n)})$ under the
condition that $K^{(n)}\subset K_1$. Since ${\mathbb
E}W(K^{(n)}\cap K_1)={\mathbb E}_1 W(K^{(n)})+O(\gamma^n)$ with $\gamma
\in (0,1)$ (cf.\ \cite{BS08}), there is no difference in the
asymptotic behaviors of both quantities, as $n\to\infty$. We also
remark that, for the asymptotic results, the parallel body $K_1$
could be replaced by any other convex body containing $K$ in its
interior; this would only affect some normalization constants.

Let $\partial K$ denote the boundary of $K$. 
We call $\partial K$ twice differentiable in the generalized sense at
a boundary point $x\in\partial K$ if there exists a quadratic form 
$Q$ on $\R^{d-1}$, the second fundamental form of $K$ at $x$,
with the following property: If $K$ is positioned in such a way that $x=o$
and $\R^{d-1}$ is a support hyperplane of $K$ at $o$,  then in a neighborhood
of $o$,  $\partial K$ is the graph of a convex function $f$ defined on a
$(d-1)$-dimensional ball around $o$ in  $\R^{d-1}$ satisfying
\begin{equation}
\label{diff}
f(z)=\mbox{$\frac12$}\,Q(z)+o(\|z\|^2),
\end{equation}
as $z\to o$. Alternatively, we call $x$ a normal boundary point of $K$. 
If this is the case, we write $\kappa(x)=\det(Q)$ to denote
the generalized Gaussian curvature of $K$ at $x$.
Writing $\kappa(x)$, we always assume that
$\partial K$ is twice differentiable in the generalized sense at
$x\in\partial K$.
According to a classical result of Alexandrov
(see \cite{Sch93}, \cite{Gru07}), $\partial K$ is twice differentiable in the generalized 
sense almost everywhere with respect to the boundary measure of $K$ ($\Hde$ almost 
all boundary points are normal boundary points). 
Finally, we define the constant
\begin{equation}
\label{wieackercons}
c_d = \frac{(d^2+d+2)(d^2+1)}{2(d+3)\cdot (d+1)!} 
\Gamma\left(\frac{d^2+1}{d+1}\right) 
\left(\frac{d+1}{\alpha_{d-1}}\right)^{2/(d+1)}
\end{equation}
(cf.\  J.A. Wieacker) \cite{Wie78}, which will appear in the statements of our main results. 
 In the following, we simply write $dx$ instead of $\mathcal{H}^d(dx)$.
 
The main asymptotic result concerning the expected difference of the mean widths of $K^{(n)}$ and $K$ 
is the following theorem. Generalizations of Theorem \ref{mainmean}, and also 
of Theorem \ref{extfacets} below, which 
hold under more general distributional assumptions, are provided in Section \ref{secpolar}. There we also indicate 
the connection to the $p$-affine surface area of a convex body.

\begin{theo}
\label{mainmean} 
If $K$ is a convex body in $\R^d$,  then
$$
\lim_{n\to\infty}n^{\frac2{d+1}}\,\E(W(K^{(n)}\cap K_1)-W(K))= 
{2\,c_d}\,{\omega_d}^{-\frac{d-1}{d+1}}\, \int_{\partial K}
\kappa(x)^{\frac{d}{d+1}}\,\Hde(dx).
$$
\end{theo}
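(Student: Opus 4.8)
The plan is to reduce the circumscribed problem to a weighted volume approximation problem for inscribed random polytopes via polarity, and then invoke the general weighted-volume result announced for Section~\ref{secpolar}. First I would fix a center of polarity in the interior of $K$; by translation assume $o\in\mathrm{int}\,K$. A hyperplane $H\in\mathcal{H}_K$ not meeting $\mathrm{int}\,K$ corresponds under polarity to a point $x=H^*$ lying in $K^*$ but outside a slightly shrunken body; more precisely, the halfspace $H^-\supset K$ dualizes to a point of $K^*$, and the intersection $K^{(n)}=\bigcap_i H_i^-$ dualizes to the convex hull $\mathrm{conv}\{H_1^*,\dots,H_n^*\}$, which is an inscribed random polytope in $K^*$ (intersected appropriately with $(K_1)^*$ on the dual side). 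The key classical identity here is that the mean width of $K^{(n)}$, i.e.\ the $\mu$-measure of hyperplanes meeting it, translates under polarity into a weighted volume of the region in $K^*$ \emph{not} covered by the random inscribed polytope: $W(K^{(n)}\cap K_1)-W(K)$ equals, up to the factor $2$ and the normalization of $\mu$, an integral of a density function over $K^*\setminus\mathrm{conv}\{H_1^*,\dots,H_n^*\}$. I would compute this density explicitly — it comes from the Jacobian of the polarity map expressed in terms of the support function, and is a smooth positive function on a neighborhood of $\partial K^*$ inside $K^*$.

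Next, I would appeal to the general weighted volume approximation theorem (the one the introduction says is established first and then ``dualized''): for a convex body $L$ with a continuous positive weight $\varrho$ near $\partial L$ and random points distributed with a density proportional to another weight on a neighborhood of $\partial L$, one has an asymptotic formula of the form
$$
\lim_{n\to\infty} n^{2/(d+1)}\,\E\!\int_{L\setminus L_{(n)}}\varrho(x)\,dx
= c_d \int_{\partial L} \varrho(x)\,\psi(x)\,\kappa_L(x)^{1/(d+1)}\,\Hde(dx),
$$
where $\psi$ records the (explicit) dependence on the sampling density and $\kappa_L$ is the generalized Gaussian curvature of $L$. Here $L=K^*$. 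I then substitute the explicit density from the polarity Jacobian for $\varrho$ and $\psi$, and use the standard transformation rule relating the curvature $\kappa_{K^*}$ of the polar body at a boundary point to the curvature $\kappa_K$ of $K$ at the corresponding boundary point (this is the classical relation involving support functions: if $x\in\partial K$ has outer normal $u$ and $x^*=x/\langle x,u\rangle\cdot(\text{something})$, then $\kappa_K(x)\,\kappa_{K^*}(x^*)$ equals an explicit power of $\langle x,u\rangle$ times $\|x\|$-type factors). Pushing the integral over $\partial K^*$ back to an integral over $\partial K$ via this change of variables, all the polarity-dependent factors — the Jacobian density, the sampling weight $\psi$, the curvature transformation, and the surface-area element ratio — must collapse into the single clean expression $2c_d\,\omega_d^{-(d-1)/(d+1)}\,\kappa(x)^{d/(d+1)}$. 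In particular the choice of polarity center must drop out, which is a useful consistency check to carry along.

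The main obstacle, and the step I would spend the most care on, is the bookkeeping of all these transformation factors so that the final constant and the exponent $d/(d+1)$ on $\kappa$ come out exactly right. There are several competing normalizations: the normalization of $\mu$ so that $\mu(\mathcal{H}_K)=2$, the normalization of the probability measure $\mu_K$, the Jacobian of $H\mapsto H^*$, the sampling density on the dual side, the curvature transformation formula, and the Hausdorff-measure ratio between $\partial K$ and $\partial K^*$. Each contributes powers of $\langle x,u\rangle$, of $\kappa$, and of $\omega_d$ or $\alpha_{d-1}$, and these have to telescope. A secondary technical point is justifying that only a neighborhood of the boundary matters — i.e.\ that the contribution of hyperplanes far from $\partial K$ (equivalently, dual points deep inside $K^*$) is negligible, and that intersecting with $K_1$ versus conditioning on boundedness changes the answer only by an exponentially small term, which is quoted from \cite{BS08}. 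Finally I would need the general weighted volume theorem to be applicable under merely continuous (not smooth) weights and with no regularity assumption on $K^*$; this is exactly what the general result in Section~\ref{secpolar} is designed to provide, so the reduction is clean once that machinery is in place.
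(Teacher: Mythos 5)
Your plan is correct and follows essentially the same route as the paper: dualize the circumscribed model at an interior point so that $K^{(n)}$ becomes the polar of an inscribed random polytope in $K^*$ sampled with the explicit density coming from the Jacobian of the polarity map, rewrite $W(K^{(n)}\cap K_1)-W(K)$ (up to an exponentially small correction for unboundedness, quoted from \cite{BS08}) as a weighted volume of $K^*\setminus K^*_{(n)}$, apply the general weighted volume approximation theorem to $K^*$, and transform the resulting boundary integral over $\partial K^*$ back to $\partial K$ via the curvature relation for polar bodies, which is exactly the paper's Lemma \ref{trafo2}. The only difference is one of emphasis: what you call a ``standard transformation rule'' for $\kappa_{K^*}$ versus $\kappa_K$ is, for general convex bodies without regularity assumptions, precisely the integral-geometric identity the paper establishes in Section \ref{Polinttrafo} using generalized normal bundle techniques.
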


\bigskip

Let $f_i(P)$, $i\in \{0,\ldots,d-1\}$, denote the number of $i$-dimensional faces of a polyhedral set  $P$. 
In the statement of the following theorem, $K^{(n)}$ could be replaced by the intersection of $K^{(n)}$ 
with a fixed polytope containing $K$ in its interior without 
changing the right-hand side. Alternatively, instead of $\E(f_{d-1}(K^{(n)}))$ we could consider the 
conditional expectation of $ f_{d-1}(K^{(n)})$ under the assumption that $K^{(n)}$ is contained in $K_1$.

\bigskip

\begin{theo}\label{extfacets}
If $K$ is a convex body in $\R^d$,  then
$$
\lim_{n\to\infty}n^{-\frac{d-1}{d+1}}\,\E(f_{d-1}(K^{(n)}))= 
{c}_d\, \omega_d^{-\frac{d-1}{d+1}} \int_{\partial K}
\kappa(x)^{\frac{d}{d+1}}\,\Hde(dx).
$$
\end{theo}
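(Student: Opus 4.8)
The plan is to derive Theorem~\ref{extfacets} from Theorem~\ref{mainmean} (or rather from the common underlying weighted volume approximation result) by exploiting the duality between the two statements, together with a probabilistic identity relating the number of facets to an expected mean width increment. The key observation is that the right-hand sides of the two theorems differ only by a factor of $2$ and by the exponent of $n$. This is not a coincidence: there should be an ``Efron-type'' identity connecting $\E(f_{d-1}(K^{(n)}))$ with the expected mean width difference at a smaller sample size.

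First I would set up the polarity transform. Fix a point, say the Steiner point or the incenter of $K$, as the origin in the interior of $K$, and pass to the polar body $K^*$. A hyperplane $H\in\mathcal{H}_K$ (missing $\mathrm{int}\,K$, meeting $K_1$) corresponds under polarity to a point $x_H$ in a spherical-like shell around $K^*$, and the halfspace $H^-\supset K$ corresponds to the condition that $x_H$ lies ``outside'' the supporting halfspace; consequently $K^{(n)}=\bigcap_i H_i^-$ is polar to the convex hull $\mathrm{conv}(K^*\cup\{x_{H_1},\dots,x_{H_n}\})$. Facets of $K^{(n)}$ correspond to vertices of this random inscribed-type polytope, i.e.\ to those sample points that are not contained in the convex hull of $K^*$ and the remaining points. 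The plan is to compute $\mu_K$ in these polar coordinates and check that the pushforward measure on the shell around $K^*$ matches exactly the weighted (non-uniform) distribution for which a general weighted volume-approximation theorem has been established earlier — this is precisely the ``dualization'' announced in the abstract and carried out in Section~\ref{secpolar}.

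Next I would invoke the standard identity expressing the expected number of extreme points of the random hull: a sample point $x_{H_i}$ is a vertex of $\mathrm{conv}(K^*\cup\{x_{H_1},\dots,x_{H_n}\})$ iff it is not in $\mathrm{conv}(K^*\cup\{x_{H_j}:j\neq i\})$. By symmetry,
\[
\E(f_{d-1}(K^{(n)}))=\E(\#\text{vertices})=n\cdot\PP\big(x_{H_n}\notin \mathrm{conv}(K^*\cup\{x_{H_1},\dots,x_{H_{n-1}}\})\big),
\]
and the latter probability is, up to the relevant normalization, the expected ``newly added'' portion of the relevant functional (here the weighted volume cut off outside the current hull) when passing from $n-1$ to $n$ points — essentially $\E(\Phi(n-1\text{ points}))-$ nothing, rewritten as a difference quotient. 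Translated back through polarity, the weighted volume cap functional on the $K^*$-side corresponds to the mean width increment on the $K$-side, so one obtains a relation of the schematic form
\[
\E(f_{d-1}(K^{(n)}))= n\cdot\big(\E\,\Delta_{n-1}-\E\,\Delta_n\big)\cdot(\text{normalizing constant}),
\]
where $\Delta_m:=\E(W(K^{(m)}\cap K_1)-W(K))$. Given the asymptotics $\E\,\Delta_m \sim C\, m^{-2/(d+1)}$ from Theorem~\ref{mainmean}, a discrete differentiation gives $\E\,\Delta_{n-1}-\E\,\Delta_n\sim \frac{2}{d+1}C\,n^{-(d+3)/(d+1)}$, and multiplying by $n$ yields the rate $n^{-(d-1)/(d+1)}$ with the constant $\frac{2}{d+1}C$; tracking the numerical factors (the $2$ in Theorem~\ref{mainmean} versus its absence here, and the $\frac{2}{d+1}$ from differentiation against the $c_d$ bookkeeping) should reproduce the stated right-hand side exactly.

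\textbf{Main obstacle.} The genuinely delicate point is \emph{not} the heuristic differentiation but making it rigorous: passing from the asymptotic equivalence $\E\,\Delta_m\sim C m^{-2/(d+1)}$ to the asymptotics of the \emph{difference} $\E\,\Delta_{n-1}-\E\,\Delta_n$ is illegitimate without additional monotonicity/convexity control, since differences of asymptotically equivalent sequences need not be asymptotically equivalent to the formal derivative. The standard remedy — which I expect the authors to use — is to prove the facet asymptotics \emph{directly} rather than by differentiation: express $\E(f_{d-1}(K^{(n)}))$ via the Efron-type identity as an integral over $\mathcal{H}_K$ of a cap-covering probability, localize near each normal boundary point of $K$ using the generalized second fundamental form \eqref{diff}, rescale by the appropriate powers of $n$, and identify the limiting integrand through a Wieacker-type computation in the tangent paraboloid model (this is where the constant $c_d$ from \eqref{wieackercons} enters). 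The technical heart is then a dominated-convergence / uniform-integrability argument showing that the non-normal boundary points and the ``large cap'' events contribute negligibly, which is exactly the kind of estimate already needed for Theorem~\ref{mainmean} and hence should be available from the general weighted approximation machinery of Section~\ref{secpolar}.
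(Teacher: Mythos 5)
Your overall instinct --- dualize and use an Efron-type identity to reduce the facet count to the weighted volume/mean-width asymptotics --- is the paper's strategy, but as written your argument has two concrete defects. First, the polar correspondence is garbled: since every $x_{H_i}=t_i^{-1}u_i$ lies in $K^*\setminus\mathrm{int}(K_1^*)\subset K^*$, the set $\mathrm{conv}(K^*\cup\{x_{H_1},\ldots,x_{H_n}\})$ is just $K^*$, and with your vertex criterion (``not contained in the convex hull of $K^*$ and the remaining points'') no sample point would ever be a vertex. The correct statement, used in Proposition \ref{equaldistr}, is that $K^{(n)}=\bigcap_i H^-(u_i,t_i)=[x_1,\ldots,x_n]^*$, so facets of $K^{(n)}$ correspond to vertices of the convex hull $[x_1,\ldots,x_n]$ of the dual points \emph{alone}, on the event that this hull contains $o$ in its interior (equivalently $K^{(n)}$ bounded); that event fails with probability $O(\alpha^n)$, and since $f_{d-1}(K^{(n)})\le n$ the correction is $O(n\alpha^n)$.

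Second, and more seriously, you misstate Efron's identity and then run into the obstacle you yourself flag. The identity is not $\E f_{d-1}(K^{(n)})\approx n\,(\E\Delta_{n-1}-\E\Delta_n)$; it is $\E_{\varrho,K^*}f_0(K^*_{(n)})=n\,\E_{\varrho,K^*}\int_{K^*\setminus K^*_{(n-1)}}\varrho(x)\,dx$, i.e.\ $n$ times the expected \emph{missed $\varrho$-mass at $n-1$ points}, with no difference of consecutive deficits and hence no need to differentiate an asymptotic relation. In the isotropic case the dual sampling density is $\varrho(x)=\omega_d^{-1}\|x\|^{-(d+1)}$ on $K^*\setminus K_1^*$, which coincides with the weight $\lambda$ that represents the mean width under polarity; so Efron plus the mean-width asymptotics at sample size $n-1$ (equivalently, Theorem \ref{weighted} with $\lambda=\varrho$, which is exactly Corollary \ref{weightedcor}) gives $\E f_{d-1}(K^{(n)})=\tfrac n2\,\E\bigl(W(K^{(n-1)}\cap K_1)-W(K)\bigr)+O(n\alpha^n)$ and hence the stated limit with the correct constant. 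Your fallback --- ``prove the facet asymptotics directly by localization and a Wieacker-type computation'' --- is not carried out and amounts to redoing the whole machinery of Sections \ref{Sec:3}--\ref{Polinttrafo} (in the general, non-isotropic case one also needs the curvature transformation of Lemma \ref{trafo2} to pass from $\partial K^*$ back to $\partial K$). So the proposal identifies the right framework but, as it stands, does not contain a valid proof: the one identity that would make it work is exactly the piece that is missing.
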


Both theorems will be deduced from a ``dual'' result on  weighted volume approximation of 
convex bodies by inscribed random polytopes which is stated in the subsequent section. 

\section{Weighted volume approximation by inscribed polytopes}\label{Sec:3}

For a given convex body, we introduce a class of inscribed random polytopes.  
Let $C$ be a convex body in $\R^d$, let $\varrho$ be a bounded, nonnegative, 
measurable function on $C$, and let 
$\mathcal{H}^d\llcorner C$ denote the restriction of $\mathcal{H}^d$ to $C$. 
Assuming that $\int_C\varrho(x)\,\mathcal{H}^d(dx)>0$,
we choose random points 
from $C$ according to the probability 
measure 
$$
\PP_{\varrho,C}:=\left(\int_C\varrho(x)\,dx\right)^{-1}\varrho\,\mathcal{H}^d\llcorner C.
$$   
Expectation with respect to $\PP_{\varrho,C}$  
is denoted by $\E_{\varrho,C}$. 
The convex hull of $n$ independent and identically distributed random points with 
distribution $\PP_{\varrho,C}$ is denoted by $C_{(n)}$ if $\varrho$ is clear from the context. 
This yields a general model of an inscribed random polytope.

Generalizing  a result by C. Sch\"utt \cite{Sch94}, we prove the following theorem.

\begin{theo}
\label{weighted} 
For a convex body $K$ in $\R^d$, a probability
density function $\varrho$ on $K$, and an integrable 
function $\lambda:K\to\R$ such that, on a
neighborhood of $\partial K$ with respect to $K$, 
$\lambda$ and $\varrho$ are continuous and $\varrho$ is positive, 
\begin{equation}\label{stern1}
\lim_{n\to\infty}n^{\frac2{d+1}}\,
\E_{\varrho,K}\int_{K\setminus K_{(n)}}\lambda(x)\,dx= c_d 
\int_{\partial K}\varrho(x)^{\frac{-2}{d+1}}\lambda(x)
\kappa(x)^{\frac1{d+1}}\,\Hde(dx)
\end{equation}
where $c_d$ is defined in (\ref{wieackercons}).
\end{theo}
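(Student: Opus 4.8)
The plan is to reduce the weighted statement to the unweighted volume-approximation theorem of Sch\"utt by a localization argument, exploiting the fact that the asymptotics are governed entirely by the behaviour of $\varrho$ and $\lambda$ near $\partial K$. First I would record the \emph{economic cap covering} technology (in the form used by B\'ar\'any and Sch\"utt): for a convex body $K$ and small $t>0$ the set of points of $K$ lying within ``volume-distance'' $t$ of the boundary, together with the Macbeath-region / cap machinery, controls the probability that a given boundary patch is not ``covered'' by $K_{(n)}$. The key quantitative input is that a random point falls into a cap $C(x,h)$ of height $h$ at a normal boundary point $x$ with probability $\asymp \varrho(x)\kappa(x)^{-1/2}h^{(d+1)/2}$ (up to $1+o(1)$ as $h\to 0$, using \eqref{diff} and continuity and positivity of $\varrho$ near $\partial K$), so that the relevant cap height scales like $h \sim (\varrho(x)^{2}\kappa(x)^{-1} n)^{-1/(d+1)}$ times a bounded factor, and the local contribution to the missed weighted volume is $\lambda(x)$ times a cap volume $\asymp \kappa(x)^{-1/2}h^{(d+1)/2}$.

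The main steps, in order, are: (i) Fix a small parameter and split $\int_{K\setminus K_{(n)}}\lambda\,dx$ into the part coming from a boundary shell $K\setminus K_{t_n}$ (where $K_{t_n}$ is the ``floating body'' or inner parallel-type body at the appropriate scale $t_n\to 0$) and the deep-interior part; show the deep-interior contribution is negligible after multiplication by $n^{2/(d+1)}$, using that $\PP_{\varrho,K}(K_{(n)}\not\supset \{{\rm dist}\ge t_n\})$ decays faster than any power of $n$ once $t_n$ is chosen correctly, since $\varrho$ is bounded and bounded below near $\partial K$. (ii) On the boundary shell, approximate $\varrho$ and $\lambda$ locally by constants: partition $\partial K$ into finitely many small pieces on which $\varrho$ is within $(1\pm\varepsilon)$ of a constant $\varrho_j$ and $\lambda$ within $\varepsilon$ of $\lambda_j$ (possible on a neighbourhood of $\partial K$ by the continuity hypotheses; the set of non-normal boundary points has $\Hde$-measure zero and contributes nothing). (iii) On each piece, compare the weighted integral against an unweighted one: a random point with density $\varrho$ restricted near that piece behaves, up to the factor $\varrho_j$, like a uniform point, and the expected missed $\lambda$-weighted volume over that piece is $(1+O(\varepsilon))\,\lambda_j$ times the expected missed \emph{volume} for a uniform sample of effective size $\sim \varrho_j n$. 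Here I invoke Sch\"utt's theorem in its local/finitely-additive form to get that this expected missed volume is $(1+o(1))\,c_d (\varrho_j n)^{-2/(d+1)}\int_{\text{piece}}\kappa^{1/(d+1)}\,\Hde$. (iv) Sum over $j$, multiply by $n^{2/(d+1)}$, let $n\to\infty$ and then $\varepsilon\to0$, obtaining the right-hand side of \eqref{stern1}.

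The genuinely delicate point is step (iii): making rigorous the passage from ``non-uniform density that is merely continuous and positive near $\partial K$'' to ``locally constant density'', with uniform (in $n$) error control, and simultaneously handling the interaction between different boundary pieces — a cap at $x$ can be ``filled in'' by points chosen near a neighbouring piece. The standard way around this, which I would follow, is the sandwiching argument: bound $\varrho$ above and below on the shell by step functions $\varrho^{\pm}$ that are constant on each piece and satisfy $\varrho^{-}\le\varrho\le\varrho^{+}$ with $\varrho^{+}/\varrho^{-}\le 1+\varepsilon$, couple the samples so that monotonicity in the density gives containment of the corresponding random polytopes in distribution, and deduce matching upper and lower bounds for $\E_{\varrho,K}\int_{K\setminus K_{(n)}}\lambda\,dx$ from the locally-constant case; one must also discretize $\lambda$ into $\lambda^{\pm}$ the same way. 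The locally-constant-density case itself still needs an extension of Sch\"utt's result to densities that are piecewise constant near the boundary, but this follows from his proof essentially verbatim since his cap estimates are local and only the leading constant picks up the density factor $\varrho_j^{-2/(d+1)}$ as exhibited by the cap-height scaling above. A secondary technical nuisance is that $\lambda$ is only assumed integrable (not bounded) away from $\partial K$; this is harmless because that region contributes $o(n^{-2/(d+1)})$ by the super-polynomial decay in step (i), so only the values of $\lambda$ on the boundary shell, where it is continuous, actually matter.
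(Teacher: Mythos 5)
Your overall architecture (deep interior vs.\ boundary shell, local constancy of $\varrho$ and $\lambda$, reduction to a constant-density case) is plausible, but the decisive step (iii) is not a reduction: the ``local/finitely-additive form'' of Sch\"utt's theorem that you invoke --- convergence of the expected missed volume over a single boundary piece, for a density that is only piecewise constant near the boundary, with control uniform enough to sum over pieces --- is not a statement available in \cite{Sch94}; it is essentially the analytic core of the theorem you are asked to prove. Saying it ``follows from his proof essentially verbatim'' is not safe here: as the paper points out, Lemma~2 of \cite{Sch94}, the crucial ingredient of that proof, is given without proof, misses a factor $\tfrac12$, and is false in the stated generality (it fails for simplices). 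The paper's proof is deliberately self-contained for exactly this reason: it avoids Macbeath regions and cap coverings, starts from $\E_{\varrho,K}\int_{K\setminus K_{(n)}}\lambda\,dx=\int_K\PP_{\varrho,K}(x\notin K_{(n)})\lambda(x)\,dx$, uses the disintegration of Lemma \ref{integration} and the cap estimate of Lemma \ref{genup} together with the Sch\"utt--Werner integrability $\int_{\partial K}r(y)^{-d/(d+1)}\,\Hde(dy)<\infty$ to justify dominated convergence, and then computes the pointwise limit $J_\varrho(y)$ at a normal boundary point by reparametrizing caps by their $\varrho$-measure, localizing to a bounded number of points in a small cap (Lemma \ref{pointsneary}) and comparing with the osculating paraboloid, hence with the ball (Lemma \ref{compareball}), where Wieacker's formula fixes the constant $c_d$. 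None of this local limit analysis is supplied, or replaced, by your outline.

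Two further points are weaker than stated, though more repairable. First, the sandwiching: step functions $\varrho^{\pm}$ with $\varrho^-\le\varrho\le\varrho^+$ are not probability densities, and ``monotonicity in the density gives containment of the random polytopes in distribution'' is not literally true at fixed sample size $n$; you would need a thinning coupling (an $n$-sample from $\varrho$ contains a subsample of Binomial$(n,c)$ points i.i.d.\ from the normalization of $c\varrho^-$), plus an argument that the limit is insensitive to such random sample sizes. Second, in step (i) the claim that $\PP(K_{(n)}\not\supset\{x:\,\mathrm{dist}(x,\partial K)\ge t_n\})$ decays superpolynomially only holds if $t_n$ is taken well above the critical scale (e.g.\ caps of $\varrho$-measure of order $(\log n)^2/n$); the remaining shell, which is where the entire limit is generated, then still has to be handled pointwise with an integrable majorant --- this is precisely where the $r(y)^{-d/(d+1)}$ integrability enters in the paper, and your outline gives no substitute for it.
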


The limit on the right-hand side of \eqref{stern1} depends only on the values of $\varrho$ and $\lambda$ 
on the boundary of $K$. In particular, we may prescribe any continuous, positive function 
$\varrho$ on $\partial K$. Then any continuous extension of $\varrho$ to a probability density on $K$ 
(there always exists such an extension) 
will satisfy Theorem \ref{weighted} with the prescribed values of $\varrho$ on the right-hand side.

Our proof of Theorem \ref{weighted} 
is inspired by the argument in C. Sch\"utt \cite{Sch94} who considered the 
special case $\varrho\equiv\lambda\equiv 1$. 
We note that for Lemma~2 in \cite{Sch94}, which is crucial for the proof in \cite{Sch94}, 
no explicit proof is provided, but  
reference is given to an analogous result in an unpublished
note by M. Schmuckenschl\"ager. Besides a missing factor
$\frac12$, Lemma~2 does not hold in the generality stated in \cite{Sch94}. For instance, 
it is not true for simplices. Most probably, this gap can be overcome, but still our approach
to prove Theorem~\ref{weighted}, 
where  Lemma~2 in \cite{Sch94}
is replaced by the elementary
Lemma \ref{integration}, 
might be of some interest. 

The present partially new approach to Theorem \ref{weighted} 
involves also some other interesting new features. In particular, we do not need the concept 
of a Macbeath region. An outline of the proof is given below. 
It should also be emphasized that 
the generality of Theorem \ref{weighted} is needed for our study of 
circumscribed random polyhedral sets 
via duality.

\bigskip

\noindent
A classical argument going back to Efron shows that
$$
\E_{\varrho,K}\left(f_0(K_{(n)})\right)=n\cdot\E_{\varrho,K}\int_{K\setminus K_{(n-1)}}
\varrho(x)\, dx,
$$
which yields the following consequence of Theorem \ref{weighted}.

\begin{coro}
\label{weightedcor} 
For a convex body $K$ in $\R^d$, and for a probability
density function $\varrho$ on $K$ which is  continuous and positive in a
neighborhood of $\partial K$ with respect to $K$, 
$$
\lim_{n\to\infty}n^{-\frac{d-1}{d+1}}\,
\E_{\varrho,K} (f_0(K_{(n)}))= {c}_d 
\int_{\partial K}\varrho(x)^{\frac{d-1}{d+1}}
\kappa(x)^{\frac1{d+1}}\,\Hde(dx)
$$
where $c_d$ is defined in (\ref{wieackercons}).
\end{coro}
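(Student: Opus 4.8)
The plan is to derive Corollary~\ref{weightedcor} directly from Theorem~\ref{weighted} via the Efron-type identity stated just above it. First I would record the identity
$$
\E_{\varrho,K}\left(f_0(K_{(n)})\right)=n\cdot\E_{\varrho,K}\int_{K\setminus K_{(n-1)}}\varrho(x)\,dx,
$$
which holds because a point $x$ chosen according to $\PP_{\varrho,K}$ is a vertex of the convex hull of the remaining $n-1$ points precisely when it lies outside that hull, and then averaging the indicator over all $n$ points and using symmetry gives the factor $n$ together with the integral of $\varrho$ over $K\setminus K_{(n-1)}$. Thus, up to the harmless shift $n\mapsto n-1$, computing the asymptotics of $\E_{\varrho,K}(f_0(K_{(n)}))$ reduces to computing the asymptotics of $\E_{\varrho,K}\int_{K\setminus K_{(n-1)}}\varrho(x)\,dx$.

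Next I would apply Theorem~\ref{weighted} with the particular choice $\lambda:=\varrho$. Since $\varrho$ is a probability density on $K$ that is continuous and positive in a neighborhood of $\partial K$ with respect to $K$, it is in particular integrable, so the hypotheses of Theorem~\ref{weighted} are met with this $\lambda$. The theorem then yields
$$
\lim_{m\to\infty}m^{\frac2{d+1}}\,\E_{\varrho,K}\int_{K\setminus K_{(m)}}\varrho(x)\,dx=c_d\int_{\partial K}\varrho(x)^{\frac{-2}{d+1}}\varrho(x)\,\kappa(x)^{\frac1{d+1}}\,\Hde(dx)=c_d\int_{\partial K}\varrho(x)^{\frac{d-1}{d+1}}\kappa(x)^{\frac1{d+1}}\,\Hde(dx),
$$
using $-\frac2{d+1}+1=\frac{d-1}{d+1}$. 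Combining this with the Efron identity (with $m=n-1$) gives
$$
n^{-\frac{d-1}{d+1}}\E_{\varrho,K}(f_0(K_{(n)}))=n^{-\frac{d-1}{d+1}}\cdot n\cdot\E_{\varrho,K}\int_{K\setminus K_{(n-1)}}\varrho(x)\,dx=n^{\frac2{d+1}}\,\E_{\varrho,K}\int_{K\setminus K_{(n-1)}}\varrho(x)\,dx.
$$

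Finally I would pass to the limit: writing $n^{\frac2{d+1}}=\left(\frac{n}{n-1}\right)^{\frac2{d+1}}(n-1)^{\frac2{d+1}}$ and noting $\left(\frac{n}{n-1}\right)^{\frac2{d+1}}\to1$, the right-hand side converges to $c_d\int_{\partial K}\varrho(x)^{\frac{d-1}{d+1}}\kappa(x)^{\frac1{d+1}}\,\Hde(dx)$, which is exactly the claimed formula. There is essentially no serious obstacle here, since all the analytic work is already contained in Theorem~\ref{weighted}; the only points requiring a line of care are the justification of the Efron identity (which is classical, and amounts to the observation that among $n$ i.i.d.\ points the expected number of vertices equals $n$ times the probability that a fixed one of them is a vertex, i.e.\ lies outside the hull of the other $n-1$) and the bookkeeping of the index shift $n\mapsto n-1$, which is absorbed into a multiplicative factor tending to $1$.
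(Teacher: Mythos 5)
Your proposal is correct and follows exactly the route the paper takes: the Efron identity combined with Theorem \ref{weighted} applied with $\lambda=\varrho$, plus the harmless index shift $n\mapsto n-1$. The exponent bookkeeping ($-\tfrac{2}{d+1}+1=\tfrac{d-1}{d+1}$ and $n^{-\frac{d-1}{d+1}}\cdot n=n^{\frac{2}{d+1}}$) is accurate, so nothing is missing.
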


The proof of Theorem \ref{weighted} is obtained through the following intermediate steps.  
Details are provided in Section \ref{Proofweighted}. 
Since the convex body $K$ is fixed, we write $\E_\varrho$ and $\PP_\varrho$ 
instead of  $\E_{\varrho,K}$
and $\PP_{\varrho,K}$, respectively. 
The basic observation to prove Theorem~\ref{weighted} is that
\begin{equation}\label{basic1}
\E_\varrho\int_{K\setminus K_{(n)}}\lambda(x)\,dx= \int_K
\PP_\varrho\left(x\not\in K_{(n)} \right) \lambda(x)\,dx,
\end{equation}
which is an immediate consequence of Fubini's theorem. Throughout the proof, 
we may assume that $o\in{\rm int}(K)$. 
The asymptotic behavior, as $n\to\infty$,  
of the right-hand side of \eqref{basic1} is determined by points $x\in K$ 
which are sufficiently close 
to the boundary of $K$. In order to give this statement a precise meaning, scaled copies of $K$ are introduced 
as follows. For $t\in(0,1)$, we define 
$K_t:=(1-t)K$ and $y_t:=(1-t)y$ for $y\in\partial K$. 
In Lemma  \ref{closetoK}, we show that 
$$
\lim_{n\to\infty}n^{\frac{2}{d+1}}\, \int_{K_{n^{\frac{-1}{d+1}}}}
\PP_\varrho\left(x\not\in K_{(n)} \right)\lambda(x)\,dx=0.
$$
This limit relation is based on a geometric estimate of $\PP_\varrho\left(x\not\in K_{(n)} \right)$, 
provided in Lemma  \ref{genup}, and on a disintegration result stated as Lemma \ref{integration}. 

For $y\in\partial K$, we write $u(y)$ for some 
exterior unit normal of $K$ at $y$. This exterior unit normal 
is uniquely determined for $\mathcal{H}^{d-1}$ almost all boundary points of $K$. Applying the 
disintegration result again and using Lebesgue's dominated convergence result, we finally get
$$
\lim_{n\to\infty}n^{\frac2{d+1}}\E_\varrho
\int_{K\setminus K_{(n)}}\lambda(x)\,dx=
\int_{\partial K}\lambda(y)J_\varrho(y)\,\mathcal{H}^{d-1}(dy),
$$
where
$$
J_\varrho(y)=\lim_{n\to\infty}
\int_0^{n^{\frac{-1}{d+1}}}n^{\frac2{d+1}}\langle y,u(y)
\rangle\PP_\varrho\left(y_t\not\in K_{(n)} \right)\,dt
$$
for $\Hde$ almost all $y\in\partial K$. For the subsequent analysis, it is sufficient to consider 
a small cap of $K$ at a normal boundary point $y\in \partial K$. The case $\kappa(y)=0$ is 
treated in Lemma \ref{zerocurv}. The main case is $\kappa(y)>0$. Here we reparametrize $y_t$ 
as $\tilde{y}_s$, in terms of the probability content of a small cap of $K$ whose 
bounding hyperplane passes through $y_t$. This implies that 
$$
J_\varrho(y)=
(d+1)^{-\frac{d-1}{d+1}}\alpha_{d-1}^{-\frac{2}{d+1}}\varrho(y)^{\frac{-2}{d+1}}
 \kappa(y)^{\frac1{d+1}}
\lim_{n\to\infty}\int_0^{n^{-1/2}}n^{\frac2{d+1}}
\PP_\varrho\left(\tilde{y}_s\not\in K_{(n)} \right)s^{-\frac{d-1}{d+1}}\,ds,
$$
cf.\ \eqref{limitform0}. 
It is then a crucial step in the proof to show that the remaining integral 
asymptotically is independent of the particular convex body $K$, and thus the 
limit of the integral is the same as for a Euclidean ball (see Lemma \ref{compareball}). 
To achieve this, the integral is first approximated,  
up to a prescribed error of order $\varepsilon>0$, by replacing 
$\PP_\varrho\left(\tilde{y}_s\not\in K_{(n)} \right)$ by the probability of an event that 
depends only on a small cap of $K$ at $y$ and on a small number of random points. This important step is 
accomplished in Lemma \ref{pointsneary}. For the proofs of Lemmas 
\ref{pointsneary} and \ref{compareball} it is essential that 
the boundary of $K$ near the normal boundary point $y$ can be suitably approximated 
by the osculating paraboloid of $K$ at $y$.

\section{Proof of Theorem \ref{weighted}}\label{Proofweighted}

To start with the actual proof, we fix some further notation. 
For $y\in\partial K$ and $t\in (0,1)$, we define the cap 
$C(y,t):=\{x\in K:\langle u(y),x\rangle\geq \langle u(y),y_t\rangle\}$ 
whose bounding hyperplane  passes through $y_t$ and has normal $u(y)$. 
For $u\in\R^d\setminus\{o\}$ and $t\in\R$, we define the hyperplane 
$H(u,t):=\{x\in\R^d:
\langle x,u\rangle =t\}$, and the closed halfspaces $H^+(u,t):=\{x\in\R^d:
\langle x,u\rangle \ge t\}$ and $H^-(u,t):=\{x\in\R^d:
\langle x,u\rangle \le t\}$ bounded by $H(u,t)$. We denote by $h(K,\cdot)=h_K$ the 
support function of $K$, that is $h(K,u):=\max\{\langle x,u\rangle:x\in K\}$ for $u\in\R^d$.

For $y\in\partial K$, the maximal number 
$r\geq 0$ such that $y-ru(y)+rB^d\subset K$ is denoted by $r(y)$. 
This number is called the interior reach of the boundary point $y$. 
It is well known that $r(y)>0$ for $\mathcal{H}^{d-1}$ almost all $y\in \partial K$. 
If $r(y)>0$,  there is a unique tangent plane of $K$ at $y$. 
In particular, $r(y)\leq r(K)$ where $r(K)$
is the inradius  of $K$. The convex hull of 
subsets $X_1,\ldots,X_r\subset\R^d$ and points $z_1,\ldots,z_s\in\R^d$ is denoted by 
$[X_1,\ldots,X_r,z_1,\ldots,z_s]$.

For real functions $f$ and $g$ defined on the same space $I$, we write
$f\ll g$ or $f=O(g)$ if there exists a positive constant $\gamma$, depending
only on $K$, $\varrho$ and $\lambda$, such that 
$|f|\leq \gamma\cdot g$ on $I$. In general, we write $\gamma_0,\gamma_1,\ldots$ to
denote positive constants depending only on $K$, $\varrho$ and
$\lambda$. The Landau symbol $o(\cdot)$ is defined as usual. We further put $\R^+:=[0,\infty)$. 

Finally, we observe that there exists a constant 
$\gamma_0\in(0,1)$ such that for $y\in\partial K$, we
have
\begin{equation}
\label{yu(y)} 
|\langle y,u(y)\rangle|\geq \gamma_0\|y\|, 
\mbox{ and hence }\|y|u(y)^\bot\|\leq\sqrt{1-\gamma_0^2}\cdot \|y\|,
\end{equation}
where $y|u^\bot$ denotes the orthogonal projection of $y$ onto the orthogonal 
complement of the vector $u\in\R^d\setminus\{o\}$. Subsequently, we always assume that $n\in\N$.

\begin{lemma}
\label{genup}
There exists a constant $\delta>0$, depending on $K$ and $\varrho$, such that
if $y\in \partial K$ and $t\in (0,\delta)$, then
$$
\PP_\varrho\left(y_t\not\in K_{(n)} \right)\ll 
\left(1-\gamma_1r(y)^{\frac{d-1}2}t^{\frac{d+1}2}\right)^n.
$$
\end{lemma}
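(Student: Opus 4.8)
The plan is to bound $\PP_\varrho(y_t\notin K_{(n)})$ by the probability that all $n$ random points avoid a suitable cap, or rather a ball inside $K$ on the ``far side'' of a hyperplane through $y_t$. First I would recall the standard observation: $y_t\notin K_{(n)}$ if and only if there is a hyperplane $H$ separating $y_t$ from the $n$ random points, equivalently there is a closed halfspace $H^-$ containing $y_t$ with all $n$ points lying in the complementary open halfspace. Consequently, if we fix the hyperplane $H(u(y),\langle u(y),y_t\rangle)$ through $y_t$ with the tangential normal $u(y)$, then on the event $\{$all $n$ points lie in $K\cap H^+(u(y),\langle u(y),y_t\rangle)\}=\{$no point lies in $C(y,t)\}$ we already have $y_t\notin K_{(n)}$ is \emph{not} forced; rather the inclusion goes the other way. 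The correct direction is: $y_t\notin K_{(n)}$ \emph{implies} there is \emph{some} separating halfspace, which is harder to control. So instead I would use the cleaner sufficient route in reverse — we want an \emph{upper} bound on $\PP_\varrho(y_t\notin K_{(n)})$, and the cap $C(y,t)$ gives exactly that: if at least one of the random points falls in $C(y,t)$, that point together with... no. Let me restate: the event $y_t\in K_{(n)}$ is \emph{implied} by the event that $y_t$ lies in the convex hull of points near it, and a convenient sufficient condition is that each of, say, $d$ ``caps around $y_t$ from $d$ independent directions'' is hit. This is the standard ``sausage'' or ``cap-covering'' trick. Thus $\PP_\varrho(y_t\notin K_{(n)})\le \PP_\varrho(\text{some relevant cap is missed by all }n\text{ points})$, and by a union bound over a bounded number ($d$ suffices, or even one with a wedge argument) of caps this is $\ll (1-\mathbb{P}_\varrho(\text{fixed cap}))^n$.

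The heart of the matter is therefore a lower bound for the $\PP_\varrho$-measure of an appropriate cap or region near $y_t$, namely a bound of the form
\begin{equation*}
\PP_\varrho(C(y,t))\gg r(y)^{\frac{d-1}{2}}\,t^{\frac{d+1}{2}}.
\end{equation*}
To get this I would use the interior reach: by definition of $r(y)$, the ball $B:=y-r(y)u(y)+r(y)B^d$ is contained in $K$ and touches $\partial K$ at $y$. The cap $C(y,t)$ of $K$ cut off by $H(u(y),\langle u(y),y_t\rangle)$ contains the corresponding spherical cap of $B$ at distance $\langle u(y),y\rangle-\langle u(y),y_t\rangle = t\langle u(y),y\rangle$ from the boundary point $y$ — and by \eqref{yu(y)}, $\langle u(y),y\rangle$ and hence this height is comparable to $t\|y\|\asymp t$ up to constants depending only on $K$ (using $o\in\mathrm{int}(K)$, so $\|y\|$ is bounded above and below). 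The volume of a spherical cap of a ball of radius $r(y)$ of height $h\asymp t$ is of order $r(y)^{\frac{d-1}{2}}h^{\frac{d+1}{2}}\asymp r(y)^{\frac{d-1}{2}}t^{\frac{d+1}{2}}$ provided $h$ is small relative to $r(y)$; this is where the constant $\delta$ enters, chosen so small that $t<\delta$ forces $h$ smaller than, say, $r(y)/2\le r(K)/2$, legitimizing the elementary cap-volume estimate. Finally, since $\varrho$ is continuous and positive on a neighborhood of $\partial K$ in $K$ and $\partial K$ is compact, $\varrho\ge\gamma>0$ there and $\int_K\varrho\,dx$ is a fixed positive constant, so $\PP_\varrho(C(y,t))\ge\gamma(\int_K\varrho)^{-1}\,V(\text{spherical cap})\gg r(y)^{\frac{d-1}{2}}t^{\frac{d+1}{2}}$; shrinking $\delta$ further if needed guarantees the whole cap $C(y,t)$ lies in the neighborhood where $\varrho>\gamma$.

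Assembling: choose directions $u_1,\dots,u_d$ (or use a single-cap/wedge argument) so that hitting all the associated caps at $y_t$ forces $y_t\in K_{(n)}$; each such cap contains a spherical cap of an inscribed ball and hence has $\PP_\varrho$-measure $\gg r(y)^{\frac{d-1}{2}}t^{\frac{d+1}{2}}$; by independence and a union bound,
\begin{equation*}
\PP_\varrho(y_t\notin K_{(n)})\le \sum_{j}\bigl(1-\PP_\varrho(\text{cap }j)\bigr)^n\ll \bigl(1-\gamma_1 r(y)^{\frac{d-1}{2}}t^{\frac{d+1}{2}}\bigr)^n,
\end{equation*}
where in the last step we used $\tbinom{n}{1}$ absorbing the constant number of summands and monotonicity of $x\mapsto(1-cx)^n$. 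The main obstacle I anticipate is the combinatorial/geometric step certifying that hitting finitely many caps around $y_t$ actually forces $y_t$ into the convex hull — one must be careful that the caps are ``transversal enough'' near $y_t$; the cleanest implementation is to take $y_t$ and surround it by $d$ caps whose outer normals positively span $\R^d$ (e.g.\ slightly tilted copies of $u(y)$), so that any points, one in each, have $y_t$ in their convex hull. This requires $t$ (hence the caps) small, which is again absorbed into the choice of $\delta$, and the tilts must be chosen uniformly over $y\in\partial K$, which is possible by compactness of $\partial K$ and the lower bound \eqref{yu(y)}.
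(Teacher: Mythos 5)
Your overall architecture (bound $\PP_\varrho(y_t\notin K_{(n)})$ by the probability that one of boundedly many fixed regions near $y_t$, each of $\PP_\varrho$-measure $\gg r(y)^{\frac{d-1}{2}}t^{\frac{d+1}{2}}$, is avoided by all $n$ points, then use a union bound and the inscribed ball of radius $r(y)$ for the measure estimate) is the paper's, but your certification step contains a genuine gap. The family you propose --- caps ``whose outer normals are slightly tilted copies of $u(y)$'' --- cannot certify $y_t\in K_{(n)}$: with positive probability all $n$ points lie in $K$ in a small neighborhood of $y$, hence strictly on the outer side of the hyperplane through $y_t$ with normal $u(y)$; then every such tilted cap is hit, yet $y_t\notin K_{(n)}$. (Also, $d$ vectors never positively span $\R^d$, and vectors close to $u(y)$ cannot positively span at all.) The real difficulty is that hyperplanes through $y_t$ with tangential or inward normals must be excluded as well, and for those one needs regions deep inside $K$, not caps near $y$. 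The paper resolves exactly this by splitting off the event $o\notin K_{(n)}$, whose probability is at most $2^d(1-\gamma_5)^n$ by the coordinate-corner argument \eqref{except}, and absorbing it into the stated bound; on $\{o\in K_{(n)}\}$ any separating hyperplane $H\ni y_t$ with $K_{(n)}\subset H^-$ has normal $v$ with $\langle v,y\rangle\ge0$, and then one of the $2^{d-1}$ \emph{wide} orthant pieces $\Theta_{i,t}=C(y,t)\cap\left(y_t+\left[\Theta'_i,\R^+y\right]\right)$ lies entirely in $H^+$ and is therefore missed. These wide, flat pieces (tangential extent $\asymp\sqrt{r(y)t}$) are the ones carrying the measure $\gg r(y)^{\frac{d-1}{2}}t^{\frac{d+1}{2}}$; narrow ``transversal'' caps around outward directions meet $K$ only in a set of diameter $O(t)$ and volume $O(t^d)$, which is too small when $t\ll r(y)$.

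A second gap is the uniformity of $\delta$. You propose to choose $\delta$, depending only on $K$ and $\varrho$, so small that $t<\delta$ forces the cap height to be at most $r(y)/2$; this is impossible, since $r(y)$ varies over $\partial K$ and $\inf_{y\in\partial K}r(y)=0$ in general (already for polytopes), so for any fixed $\delta$ there are boundary points with $0<r(y)\ll t<\delta$. The paper handles this by a case distinction: if $t\le\gamma_3 r(y)$, the inscribed ball $y-r(y)u(y)+r(y)B^d$ gives a $(d-1)$-ball of radius $\gg\sqrt{r(y)t}$ in $(y_t+\Theta'_i)\cap K$, hence $V(\Theta_{i,t})\gg r(y)^{\frac{d-1}{2}}t^{\frac{d+1}{2}}$; if $t\ge\gamma_3 r(y)$, one instead uses $V(\Theta_{i,t})\gg t^d\gg r(y)^{\frac{d-1}{2}}t^{\frac{d+1}{2}}$. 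Note also that the measure lower bound must be proved for each certifying region (the orthant pieces), not merely for the full cap $C(y,t)$. With these two repairs --- the $o\in K_{(n)}$ device with the orthant decomposition of the cap, and the two-case volume estimate --- your plan becomes essentially the paper's proof of Lemma \ref{genup}.
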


\noindent
{\bf Remarks } \begin{enumerate}
\item In addition, we may assume that on $K\setminus{\rm int}(K_\delta)$, 
both functions $\varrho,\lambda$
are continuous, $\varrho$ is positive and 
$\gamma_1r(K)^{\frac{d-1}2}\delta^{\frac{d+1}2}<1$.
\item In the following, we will use the notion of a ``coordinate corner''. 
Given an orthonormal basis in a linear $i$-dimensional subspace $L$, 
the corresponding $(i-1)$-dimensional coordinate planes cut $L$ into 
$2^i$ convex cones, which we call coordinate corners (with respect to $L$ 
and the given basis).
\end{enumerate}

\bigskip

\begin{proof} If $r(y)=0$, then there is nothing to prove. So let $r(y)>0$, thence $u(y)$ is 
uniquely determined. Choose an orthonormal basis in $u(y)^\perp$, and 
let $\Theta'_1,\ldots,\Theta'_{2^{d-1}}$ be the corresponding coordinate corners 
in $u(y)^\bot$. For $i=1,\ldots,2^{d-1}$ and $t\in[0,1]$, we define
$$
\Theta_{i,t}:=C(y,t)\cap\left(y_t+\left[\Theta'_i,\R^+y\right]
\right).
$$
If $\delta>0$ is small enough to ensure that $\varrho>0$ is positive and continuous 
in a neighborhood (relative to $K$) of $\partial K$, then
$$
\int_{\Theta_{i,t}}\varrho(x)\,dx\ge\gamma_2\, V(\Theta_{i,t}).
$$

If $y_t\not\in K_{(n)}$ and $o\in K_{(n)}$, then there exists a 
hyperplane $H$ through $y_t$, bounding the 
halfspaces $H^-$ and $H^+$, for which 
$K_{(n)}\subset H^-$. Moreover, there is some $i\in\{1,\ldots,2^{d-1}\}$ 
such that $\Theta_{i,t}\subset H^+$. Therefore
\begin{equation}\label{eqa}
\PP_\varrho\left(y_t\not\in K_{(n)}, o\in K_{(n)} \right)\ll
\sum_{i=1}^{2^{d-1}}\left(1-\gamma_2 V(\Theta_{i,t})\right)^n.
\end{equation}
Finally, we prove
\begin{equation}\label{eqb}
V(\Theta_{i,t})\gg r(y)^{\frac{d-1}2}t^{\frac{d+1}2},
\end{equation}
for $i=1,\ldots,2^{d-1}$.  According
to (\ref{yu(y)}), there exist positive constants $\gamma_3,\gamma_4$ with 
$\gamma_3\le 1$ such that
if $t\leq \gamma_3r(y)$, then $(y_t+\Theta_i')\cap K$ contains a
$(d-1)$-ball of radius at least
$$\gamma_4\sqrt{r(y)^2-(r(y)-t)^2}\ge \gamma_4\sqrt{r(y)t},
$$
and we are done.
On the other hand, if $t\geq \gamma_3r(y)$, then
$$
V(\Theta_{i,t})\gg t^d\gg r(y)^{\frac{d-1}2}t^{\frac{d+1}2}.
$$
To deal with the case $o\not\in K_{(n)}$, we observe that there exists 
a positive constant $\gamma_5\in (0,1)$ such that the probability measure 
of each of the $2^d$ coordinate corners of $\R^d$ is at least $\gamma_5$. 
If $o\not\in K_{(n)}$, then $\{x_1,\ldots,x_n\}$ is disjoint from one of 
these coordinate corners, and hence 
\begin{equation}\label{except}
\PP_\varrho(o\not\in K_{(n)})\le 2^d(1-\gamma_5)^n.
\end{equation}
Now the assertion follows from \eqref{eqa}, \eqref{eqb} and \eqref{except}. $\Box$
\end{proof}

\bigskip

\noindent
Subsequently, the estimate of Lemma \ref{genup} will be used, for instance, to restrict the 
domain of integration on the right-hand side of \eqref{basic1} (cf.\ Lemma \ref{closetoK}) 
and to justify an application of Lebesgue's 
dominated convergence theorem (see \eqref{limitform}). For these applications, 
we also need that if $c>0$ is such that 
$\omega:=c\,\delta^{\frac{d+1}2}<1$, 
then 
\begin{equation}
 \label{Gamma}
\int_0^\delta\left(1-c\,t^{\frac{d+1}2}\right)^n\, dt
= \frac2{d+1}\, c^{\frac{-2}{d+1}}
\int_0^{\omega}s^{\frac2{d+1}-1}(1-s)^n\,ds
\ll  c^{\frac{-2}{d+1}}\cdot n^{\frac{-2}{d+1}},
\end{equation}
where we use that $(1-s)^{n}\le e^{-ns}$ for $s\in[0,1]$ and $n\in\N$. 

The next lemma will allow us to decompose integrals in a suitable way.

\begin{lemma}\label{integration}
If \, $0\le t_0\le t_1<\delta$ and $h:K\to[0,\infty]$ is a measurable function, then
$$
\int_{K_{t_0}\setminus K_{t_1}} h(x) \, dx
=\int_{\partial K}\int_{t_0}^{t_1}(1-t)^{d-1}
\langle y,u(y)\rangle h(y_t)\, dt\, \mathcal{H}^{d-1}(dy).
$$
\end{lemma}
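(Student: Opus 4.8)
The statement to prove is Lemma~\ref{integration}, a disintegration formula expressing the integral over a shell $K_{t_0}\setminus K_{t_1}$ between two scaled copies of $K$ as an iterated integral over $\partial K$ and the radial parameter $t$.

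\medskip

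\textbf{Approach.} The plan is to recognize that the map $\Phi\colon\partial K\times(t_0,t_1)\to K_{t_0}\setminus K_{t_1}$, $(y,t)\mapsto y_t=(1-t)y$, is essentially a smooth change of variables (a ``polar-type'' parametrization centered at $o\in\mathrm{int}(K)$), and that the claimed identity is precisely the change-of-variables formula once one computes the Jacobian of $\Phi$. So the first step is to set up coordinates: since $r(y)>0$ and hence $\partial K$ has a unique tangent hyperplane for $\mathcal{H}^{d-1}$-almost all $y$, the boundary $\partial K$ is a Lipschitz hypersurface and the area formula applies. For such $y$, parametrize a neighborhood in $\partial K$ by a chart, and note $\Phi(y,t)=(1-t)y$. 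The second step is the Jacobian computation: differentiating, $\partial_t\Phi=-y$, while the tangential derivatives are $(1-t)$ times the tangential derivatives of $y$, which span the tangent hyperplane $T_y\partial K = u(y)^\perp$. Thus the Jacobian determinant factors as $(1-t)^{d-1}$ (from the $d-1$ tangential directions) times the length of the component of $-y$ normal to $u(y)^\perp$, which is exactly $|\langle y,u(y)\rangle|=\langle y,u(y)\rangle$ (the latter equality because $o\in\mathrm{int}(K)$ forces $\langle y,u(y)\rangle>0$; cf.\ \eqref{yu(y)}). The third step is to assemble: by the change-of-variables/area formula,
$$
\int_{K_{t_0}\setminus K_{t_1}}h(x)\,dx=\int_{\partial K}\int_{t_0}^{t_1}(1-t)^{d-1}\langle y,u(y)\rangle\, h(y_t)\,dt\,\mathcal{H}^{d-1}(dy),
$$
which is the assertion; the condition $t_1<\delta<1$ guarantees $\Phi$ is a bijection onto the shell (the scaled bodies $K_t$ are strictly nested) and that $(1-t)^{d-1}$ stays bounded away from $0$.

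\medskip

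\textbf{Technical care.} The honest way to handle the almost-everywhere differentiability of $\partial K$ is to invoke a standard disintegration of Lebesgue measure in ``radial'' coordinates about an interior point, or equivalently the coarea/area formula for the Lipschitz map $x\mapsto$ (the value of $t$ with $x\in\partial K_t$). One clean route: the boundary $\partial K$ admits the parametrization by the radial function $\rho_K$ on $S^{d-1}$, and both $\mathcal{H}^{d-1}$ on $\partial K$ and $dx$ on $K$ can be pushed to $S^{d-1}\times(0,1)$; comparing the two expressions yields the factor $\langle y,u(y)\rangle$, since $\langle y,u(y)\rangle$ is exactly the density relating the spherical surface element on $\partial K$ to its projection — this is the classical identity $\mathcal{H}^{d-1}(dy)=\langle y,u(y)\rangle^{-1}\|y\|^{d}\,\mathcal{H}^{d-1}_{S^{d-1}}(d\theta)$ type relation, suitably rewritten. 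Alternatively, since $h\ge 0$, one may first verify the formula for $h=\mathbf{1}_A$ with $A$ a Borel set, reduce to $A$ of the form $\{y_t : y\in B,\ t\in(a,b)\}$ by a monotone class argument, and then it is a direct volume computation; monotone convergence extends it to all measurable $h\colon K\to[0,\infty]$.

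\medskip

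\textbf{Main obstacle.} The only real subtlety is justifying the Jacobian factorization at the level of an a.e.-differentiable (merely Lipschitz) boundary rather than a $C^1$ one — i.e.\ making the area-formula bookkeeping rigorous while discarding the $\mathcal{H}^{d-1}$-null set of non-regular boundary points (which also carries no mass under the right-hand side). Everything else — the appearance of $(1-t)^{d-1}$ from the homothety and of $\langle y,u(y)\rangle$ as the normal component of the position vector — is elementary linear algebra. I would therefore spend most of the written proof on a careful statement of the radial disintegration and a one-line Jacobian computation, citing the area formula (e.g.\ from \cite{Sch93}) for the regularity-free version.
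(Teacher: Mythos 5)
Your proposal is correct and follows essentially the same route as the paper: both parametrize the shell by the bilipschitz map $(y,t)\mapsto(1-t)y$ on $\partial K\times[t_0,t_1]$, compute the a.e.\ Jacobian $(1-t)^{d-1}\langle y,u(y)\rangle$, and conclude by the area/coarea theorem (the paper cites Federer). The alternative reductions you sketch (radial-function coordinates, monotone class) are not needed and are not used in the paper.
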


\begin{proof}
The map $T:\partial K\times [t_0,t_1]\to K_{t_0}\setminus K_{t_1}$, $(y,t)\mapsto (1-t)y$,  
provides a bilipschitz parametrization of $K_{t_0}\setminus K_{t_1}$ with $(1-t)y=y_t\in \partial K_t$. 
The Jacobian of $T$, for $\Hde$ almost all $y\in \partial K$ and $t\in[t_0,t_1]$, is given by 
$JT(y,t)=(1-t)^{d-1}\langle y,u(y)\rangle$, 
where $u(y)$ is the ($\Hde$ a.e.) unique exterior unit normal of $\partial K$ at $y$. The assertion 
now follows from Federer's area/coarea theorem (see \cite{Federer69}). $\Box$
\end{proof}

\bigskip

\noindent
In the following, we will use the important fact that, for 
$\alpha>-1$, 
\begin{equation}
\label{inball}
\int_{\partial K}r(y)^\alpha\,\Hde(dy)<\infty,
\end{equation}
which is a  result due to 
C. Sch\"utt and  E. Werner \cite{ScW94}. 

By decomposing $\lambda$ 
in its positive and its negative part, we can henceforth assume that 
$\lambda$ is a nonnegative, integrable function.

\begin{lemma}
\label{closetoK}
As $n$ tends to infinity, 
$$
\int_{K_{n^{\frac{-1}{d+1}}}}
\PP_\varrho\left(x\not\in K_{(n)} \right)\lambda(x)\,dx=
o\left(n^{\frac{-2}{d+1}}\right).
$$
\end{lemma}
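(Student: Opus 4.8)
\noindent The plan is to split the domain $K_{n^{-1/(d+1)}}$ into an outer collar, on which $\lambda$ is bounded and Lemma~\ref{genup} applies, and an inner body, on which $\PP_\varrho(x\notin K_{(n)})$ decays exponentially fast, uniformly in $x$. Fix $\delta'\in(0,\delta)$, with $\delta$ as in Lemma~\ref{genup}, and assume $n$ is so large that $n^{-1/(d+1)}<\delta'$. Then
$$
\int_{K_{n^{-1/(d+1)}}}\PP_\varrho\left(x\notin K_{(n)}\right)\lambda(x)\,dx
=\int_{K_{n^{-1/(d+1)}}\setminus K_{\delta'}}\PP_\varrho\left(x\notin K_{(n)}\right)\lambda(x)\,dx
+\int_{K_{\delta'}}\PP_\varrho\left(x\notin K_{(n)}\right)\lambda(x)\,dx,
$$
and I shall show that each summand is $o(n^{-2/(d+1)})$.

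\smallskip
\noindent\emph{The outer collar.} Since $K_{n^{-1/(d+1)}}\setminus K_{\delta'}\subseteq K\setminus{\rm int}(K_\delta)$, the function $\lambda$ is bounded there (Remark~1 after Lemma~\ref{genup}). Applying the disintegration formula of Lemma~\ref{integration} with $t_0=n^{-1/(d+1)}$, $t_1=\delta'$, bounding the factor $(1-t)^{d-1}\langle y,u(y)\rangle\lambda(y_t)$ by a constant, and inserting Lemma~\ref{genup}, we obtain
$$
\int_{K_{n^{-1/(d+1)}}\setminus K_{\delta'}}\PP_\varrho\left(x\notin K_{(n)}\right)\lambda(x)\,dx
\ll\int_{\partial K}\int_{n^{-1/(d+1)}}^{\delta'}\left(1-\gamma_1 r(y)^{\frac{d-1}2}t^{\frac{d+1}2}\right)^{n}dt\,\Hde(dy).
$$
Now $(1-s)^n\le e^{-ns}$, and the substitution used in \eqref{Gamma} (with $c=\gamma_1 r(y)^{(d-1)/2}$ and lower limit $n^{-1/(d+1)}$ in place of $0$, which corresponds to $s=\gamma_1 r(y)^{(d-1)/2}n^{-1/2}$) shows that the inner integral is at most
$$
\frac{2}{d+1}\,\gamma_1^{-\frac2{d+1}}\,r(y)^{-\frac{d-1}{d+1}}\,n^{-\frac2{d+1}}\int_{\gamma_1 r(y)^{(d-1)/2}\sqrt n}^{\infty}s^{\frac2{d+1}-1}e^{-s}\,ds .
$$
For $\Hde$-almost every $y\in\partial K$ one has $r(y)>0$, so the last integral tends to $0$ as $n\to\infty$; it is dominated by $\Gamma\!\left(\frac2{d+1}\right)$, and $r(y)^{-\frac{d-1}{d+1}}$ is $\Hde$-integrable over $\partial K$ by \eqref{inball}, because $-\frac{d-1}{d+1}>-1$. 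Lebesgue's dominated convergence theorem then gives that this summand is $o(n^{-2/(d+1)})$.

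\smallskip
\noindent\emph{The inner body.} Here the point is the uniform estimate $\sup_{x\in K_{\delta'}}\PP_\varrho(x\notin K_{(n)})\le\gamma_6\,\gamma_7^{\,n}$ for suitable $\gamma_6>0$ and $\gamma_7\in(0,1)$; granting it, the second summand is at most $\gamma_6\gamma_7^{\,n}\int_K\lambda(x)\,dx=O(\gamma_7^{\,n})=o(n^{-2/(d+1)})$, since $\lambda$ is integrable. To prove the uniform estimate, observe first that if $x\notin K_{(n)}$, then by strict separation all $n$ random points lie in the cap $A_u:=\{z\in K:\langle u,z\rangle\le\langle u,x\rangle\}$ for some $u\in S^{d-1}$; hence $\PP_\varrho(x\notin K_{(n)})\le\PP_\varrho(\exists\,u\in S^{d-1}:\text{all }x_i\in A_u)$. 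A compactness argument reduces the union over $u$ to a finite $\eta$-net $u_1,\dots,u_N$ of $S^{d-1}$ (with $N$ depending only on $K$ and $\delta'$): if $\eta$ is small enough, every $A_u$ is contained in a slightly enlarged cap $A_{u_j}^{(\tau)}:=\{z\in K:\langle u_j,z\rangle\le\langle u_j,x\rangle+\tau\}$, where $\tau>0$ may be taken as small as needed (at the price of enlarging $N$). Finally, for $x\in K_{\delta'}$, $\tau$ small, and any $j$, the complementary cap $K\setminus A_{u_j}^{(\tau)}$ is a cap on the far side of a point still lying in ${\rm int}(K)$ at distance $\gg\delta'$ from $\partial K$; it therefore contains the convex hull of a boundary point $z^*\in\partial K$ with $\langle u_j,z^*\rangle=h(K,u_j)$ and of a fixed ball $r_0 B^d\subseteq K$ (which exists since $o\in{\rm int}(K)$), and an elementary computation locates inside this convex hull a Euclidean ball of radius $\gg\delta'$ contained in the collar $K\setminus{\rm int}(K_\delta)$, on which $\varrho$ is continuous and positive, hence bounded below by a positive constant. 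Consequently $\PP_\varrho(A_{u_j}^{(\tau)})\le\gamma_7<1$ uniformly in $j$ and in $x\in K_{\delta'}$, and the union bound over $u_1,\dots,u_N$ yields $\PP_\varrho(x\notin K_{(n)})\le N\gamma_7^{\,n}$.

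\smallskip
\noindent\emph{The main obstacle} is precisely the uniform positive lower bound for $\PP_\varrho$ of the far-side cap, over all directions $u$ and all $x\in K_{\delta'}$: since $\varrho$ is only a probability density, it may vanish in the interior of $K$, so one must locate inside every such cap a ball of controlled size lying in the boundary collar where $\varrho$ is guaranteed positive; moreover the net in the preceding step must be chosen fine enough (in terms of $\delta$, $r_0$ and ${\rm diam}\,K$) so that the enlargement of the caps does not destroy this property. Granting this, the outer-collar estimate is a routine combination of Lemmas~\ref{genup} and \ref{integration} with \eqref{Gamma} and \eqref{inball}.
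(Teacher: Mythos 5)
Your proof is correct in substance, but it takes a different route from the paper in both halves, so a comparison is worthwhile. For the outer collar, the paper proves the pointwise estimate $\int_{n^{-1/(d+1)}}^{\delta}\PP_\varrho(y_t\notin K_{(n)})\,dt\ll r(y)^{-d/(d+1)}n^{-2/(d+1)-\varepsilon}$ with an explicit $\varepsilon=(2(d^2-1))^{-1}$, by distinguishing the cases $r(y)\le n^{-(d+1)\varepsilon}$ and $r(y)\ge n^{-(d+1)\varepsilon}$, and then integrates over $\partial K$ using \eqref{inball} with exponent $-d/(d+1)$; this buys a quantitative rate $O(n^{-2/(d+1)-\varepsilon})$. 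You instead keep the lower integration limit in the substitution of \eqref{Gamma} and conclude by dominated convergence (domination by $r(y)^{-(d-1)/(d+1)}\Gamma(\frac{2}{d+1})$, integrable by \eqref{inball}, and pointwise convergence to $0$ since $r(y)>0$ a.e.); this is cleaner and gives exactly the $o(n^{-2/(d+1)})$ the lemma asserts, but no rate. For the inner body, the paper covers the collar $K\setminus{\rm int}(K_\delta)$ by finitely many balls $z_j+\frac{\omega}{4}B^d$ and shows that if $x\in K_\delta$ lies outside $K_{(n)}$, then a supporting hyperplane of $K_\delta$ parallel to the separating hyperplane leaves one entire ball on the far side, so all $n$ points miss a set of probability at least $p_0>0$; your argument discretizes the directions by an $\eta$-net of $S^{d-1}$, enlarges the caps by $\tau$, and bounds each enlarged cap's probability away from $1$ by exhibiting positive $\varrho$-mass in the complementary cap. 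Both are elementary compactness arguments yielding an exponential bound; the paper's ball covering avoids the cap-enlargement and net-fineness bookkeeping that your version requires.

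One assertion in your inner-body step is false as written: the complementary cap $K\setminus A_{u_j}^{(\tau)}$ does not contain the convex hull $[z^*,r_0B^d]$ (the ball $r_0B^d$ typically lies on the near side of the hyperplane). What is true, and what your argument needs, is that since $\langle u_j,x\rangle\le(1-\delta')h(K,u_j)$, the complementary cap contains the cap $\{z\in K:\langle u_j,z\rangle\ge h(K,u_j)-(\delta' r_0-\tau)\}$, hence the portion of the cone $[z^*,r_0B^d]$ beyond that height. Taking $\tau\le\delta' r_0/2$ (so the net must be chosen depending on $\delta'$, $r_0$ and ${\rm diam}\,K$, as you note), this truncated cone has height $\ge\delta' r_0/2$ and contains a ball of radius $\gg\delta'$ all of whose points are within distance $\delta' r_0<\delta r_0\le{\rm dist}(\partial K,K_\delta)$ of $\partial K$, hence lie in the collar where $\varrho$ is bounded below by compactness. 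With this repair your uniform bound $\PP_\varrho(A_{u_j}^{(\tau)})\le\gamma_7<1$ and the union bound go through, and the proof is complete.
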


\proof Let $\delta>0$ be chosen as in Lemma \ref{genup} and the 
subsequent remark. First, 
we consider a point $x$ in $K_\delta$. Let $\omega$ be the minimal distance
between the points of $\partial K$ and $K_\delta$,
and let $z_1,\ldots,z_k$ be a maximal family of points in
$K\setminus{\rm int}(K_\delta)$ such that $\|z_i-z_j\|\geq \frac{\omega}4$
for $i\neq j$. We define $p_0>0$ by
$$
p_0:=\min\left\{\PP_\varrho\left(z_i+\tfrac{\omega}4\,B^d\right):i=1,\ldots,k\right\}.
$$
Let $x\in K_\delta$. If $x\not\in K_{(n)}$,  
then there exists a hyperplane $H(u,t)$ such that $x\in \text{int}(H^+(u,t))$ 
and $K_{(n)}\subset H^-(u,t)$. Since $x\in K_\delta$, there exists a supporting 
hyperplane $H(u,h(K_\delta,u))$ of $K_\delta$ for which $K_{(n)}\subset 
\text{int}(H^-(u,h(K_\delta,u)))$. If $z\in H(u,h(K_\delta,u))\cap\partial K_\delta$,  
then 
$$z+\frac{\omega}{2}u+\frac{\omega}{2}B^d\subset K\cap H^+(u,h(K_\delta,u)).
$$ 
By the maximality of the set $\{z_1,\ldots,z_k\}$, we have 
$$\{z_1,\ldots,z_k\}\cap 
\left( z+\frac{\omega}{2}u+\frac{\omega}{4}B^d\right)\neq\emptyset.
$$ 
Let $z_j$ lie in the intersection. 
Then $z_j+\frac{\omega}{4}B^d\subset H^+(u,h(K_\delta,u))$, and hence $x_i\notin 
z_j+\frac{\omega}{4}B^d$ for $i=1,\ldots,n$. This implies that, for $x\in K_\delta$, 
\begin{equation}\label{small0}
\PP_\varrho\left(x\not\in K_{(n)} \right)\leq k(1-p_0)^n.
\end{equation}
Put $\varepsilon:=(2(d^2-1))^{-1}$ and let $n\ge \delta^{-(d+1)}$. For  
 $y\in \partial K$ we show that 
\begin{equation}
\label{ppry}
\int_{n^{\frac{-1}{d+1}} }^{\delta}\PP_\varrho\left(y_t\not\in K_{(n)} \right)\,dt
\ll r(y)^{-\frac{d}{d+1}}n^{\frac{-2}{d+1}-\varepsilon}.
\end{equation} 
In fact, if $r(y)\leq n^{-(d+1)\varepsilon}$,  
then Lemma~\ref{genup} and (\ref{Gamma}) yield
\begin{eqnarray*}
\int_{n^{\frac{-1}{d+1}} }^{\delta}\PP_\varrho\left(y_t\not\in K_{(n)} \right)\,dt &\le &
\int_0^\delta\left(1-\gamma_1r(y)^{\frac{d-1}{2}}t^{\frac{d+1}{2}}\right)^n\, dt\\
&\ll& r(y)^{-\frac{d-1}{d+1}}n^{-\frac{2}{d+1}}\\
&\le& 
r(y)^{-\frac{d}{d+1}}n^{-\frac{2}{d+1}-\varepsilon},
\end{eqnarray*}
where the assumption on $r(y)$ is used for the last estimate.

If $r(y)\geq n^{-(d+1)\varepsilon}$
and $n\geq n_0$, where $n_0$ depends
on $K$, $\varrho$ and $\lambda$, then
Lemma~\ref{genup} implies for all
$t\in(n^{\frac{-1}{d+1}},\delta)$ that 
$$
\PP_\varrho\left(y_t\not\in K_{(n)} \right)\ll
\left(1-\gamma_1n^{-\frac{d^2-1}2\,\varepsilon-\frac{1}2}\right)^n=(1-\gamma_1n^{-3/4})^n
\le e^{-\gamma_1n^{1/4}}\le r(K)^{-\frac{d}{d+1}}n^{\frac{-2}{d+1}-\varepsilon},
$$
which again yields (\ref{ppry}).
In particular, writing $I$ to denote
the integral in Lemma~\ref{closetoK}, we obtain from  
Lemma \ref{integration}, \eqref{small0},  \eqref{ppry} and (\ref{inball}) that
\begin{eqnarray*}
I&\ll&
\int_{K_{\delta}}
\PP_\varrho\left(x\not\in K_{(n)} \right)\lambda(x)\,dx+
\int_{\partial K}\int_{n^{\frac{-1}{d+1}}}^{\delta}
\PP_\varrho\left(y_t\not\in K_{(n)} \right)\,dt\,\Hde(dy)\\
&\ll&k(1-p)^n+\int_{\partial K}
r(y)^{-\frac{d}{d+1}}n^{\frac{-2}{d+1}-\varepsilon}\,\Hde(dy)
\ll n^{\frac{-2}{d+1}-\varepsilon},
\end{eqnarray*}
where we also used that $\lambda$ is integrable on $K$ and 
bounded on $K\setminus K_\delta$. This is the required estimate. 
\proofbox

\bigskip

\noindent
It follows from  \eqref{basic1}, Lemma~\ref{closetoK} and Lemma \ref{integration} 
 that
\begin{align*}
&\lim_{n\to\infty}n^{\frac2{d+1}}\, \E_\varrho
\int_{K\setminus K_{(n)}}\lambda(x)\,dx\\
  &\qquad = \lim_{n\to\infty}n^{\frac2{d+1}} 
\int_{K}\PP_\varrho\left(x\not\in K_{(n)} \right)\lambda(x)\,dx\\
&\qquad =\lim_{n\to\infty}\int_{\partial K}\int_0^{n^{\frac{-1}{d+1}}}
n^{\frac2{d+1}} (1-t)^{d-1}\langle y,u(y)\rangle\PP_\varrho\left(y_t\not\in K_{(n)} \right)
\lambda(y_t)\,dt\,\Hde(dy).
\end{align*}
Lemma~\ref{genup} and (\ref{Gamma}) imply that if
$y\in\partial K$ and $r(y)>0$, then
$$
\int_0^{n^{\frac{-1}{d+1}}}
n^{\frac2{d+1}} \PP_\varrho\left(y_t\not\in K_{(n)} \right)
\langle y,u(y)\rangle\lambda(y_t)\,dt\ll r(y)^{-\frac{d-1}{d+1}}.
$$
Therefore, by (\ref{inball}) and since $\lambda$ is bounded and continuous in a 
neighborhood of $\partial K$ we may apply Lebesgue's dominated convergence theorem, 
and thus we conclude
\begin{equation}
\label{limitform}
\lim_{n\to\infty}n^{\frac2{d+1}}\E_\varrho
\int_{K\setminus K_{(n)}}\lambda(x)\,dx=
\int_{\partial K}\lambda(y)J_\varrho(y)\,\Hde(dy),
\end{equation}
where
$$
J_\varrho(y):=\lim_{n\to\infty}
\int_0^{n^{\frac{-1}{d+1}}}n^{\frac2{d+1}}\langle y,u(y)
\rangle\PP_\varrho\left(y_t\not\in K_{(n)} \right)\,dt,
$$
for $\Hde$ almost all $y\in\partial K$.

\begin{lemma}
\label{zerocurv}
If $y\in\partial K$ is a normal boundary point of $K$ with $\kappa(y)=0$, 
then $J_\varrho(y)=0$.
\end{lemma}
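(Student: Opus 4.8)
The plan is to show that the quantity
$$
\int_0^{n^{-1/(d+1)}} n^{2/(d+1)}\,\langle y,u(y)\rangle\,\PP_\varrho\!\left(y_t\notin K_{(n)}\right)\,dt
$$
tends to $0$ as $n\to\infty$ when the Gaussian curvature at the normal boundary point $y$ vanishes. Since $\langle y,u(y)\rangle$ is a fixed positive constant (assuming $o\in\mathrm{int}(K)$), it suffices to bound the integral of $\PP_\varrho(y_t\notin K_{(n)})$. The idea is to exploit that $\kappa(y)=0$ forces the boundary of $K$ to be ``flatter'' than a paraboloid in at least one principal direction near $y$: by the twice-differentiability in the generalized sense \eqref{diff}, the cap $C(y,t)$ contains, for small $t$, a region whose $(d-1)$-dimensional cross-sections are much larger than $t^{1/2}$ in the degenerate direction. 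Concretely, after positioning $y=o$ with $\R^{d-1}$ the support hyperplane and $\partial K$ locally the graph of $f(z)=\frac12 Q(z)+o(\|z\|^2)$, the condition $\det Q=0$ means $Q$ has a zero eigenvalue, so along the corresponding eigendirection $f$ grows strictly slower than quadratically. This lets one show that, for every $M>0$, there is $t_M>0$ such that for $t<t_M$ the cap $C(y,t)$ contains a box-like body of volume at least $M\,t^{(d+1)/2}$.

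The key steps, in order, would be: (i) reduce to estimating $\int_0^{n^{-1/(d+1)}} n^{2/(d+1)}\PP_\varrho(y_t\notin K_{(n)})\,dt$ and recall from the proof of Lemma \ref{genup} that $\PP_\varrho(y_t\notin K_{(n)},\,o\in K_{(n)})\ll \sum_i (1-\gamma_2 V(\Theta_{i,t}))^n$, together with the exponentially small bound \eqref{except} for the event $o\notin K_{(n)}$; (ii) using \eqref{diff} and $\kappa(y)=0$, improve the volume bound \eqref{eqb} to $V(\Theta_{i,t})\ge c(t)\,t^{(d+1)/2}$ where $c(t)\to\infty$ as $t\to 0^+$ (this replaces the fixed constant $\gamma_1 r(y)^{(d-1)/2}$ that appears in the nondegenerate estimate); (iii) substitute this into the integral and apply the computation \eqref{Gamma}, which with a constant $c(t)$ growing to infinity yields $\int_0^{n^{-1/(d+1)}} n^{2/(d+1)}(1-c(t)t^{(d+1)/2})^n\,dt \to 0$; a clean way to organize (iii) is to fix $M>0$, split the $t$-integral at $t_M$, bound the inner part using $c(t)\ge M$ for $t<t_M$ (getting a contribution $\ll M^{-2/(d+1)}$ by \eqref{Gamma}), and bound the outer part $t\in[t_M,n^{-1/(d+1)}]$, which is nonempty only for finitely many further $n$ and is then handled by the exponential decay as in Lemma \ref{genup}; (iv) let $M\to\infty$ to conclude $J_\varrho(y)=0$.

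The main obstacle is step (ii): making the degeneracy of $Q$ yield a genuinely super-$t^{(d+1)/2}$ lower bound on the cap volume requires care, because the $o(\|z\|^2)$ error term in \eqref{diff} must be controlled uniformly on the relevant scale. The point is that the cap $C(y,t)$, in the local graph coordinates, contains the subgraph region $\{(z,s): 0\le s\le \langle u(y),y_t\rangle - (\text{linear terms}), f(z)\le s\}$; in the degenerate eigendirection $f$ is $o$ of the square, so the cap extends a distance $\gg t^{1/2}$ there, while in the other $d-2$ directions and the height direction it behaves as in the ball case, giving an extra factor tending to infinity. One must also retain the coordinate-corner decomposition $\Theta_{i,t}$ from Lemma \ref{genup} so that the relevant piece of the cap is convex and its volume is genuinely bounded below; this is routine once the one-dimensional degenerate-direction estimate is in place. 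The continuity and positivity of $\varrho$ near $\partial K$ (so that $\int_{\Theta_{i,t}}\varrho\,dx\ge\gamma_2 V(\Theta_{i,t})$) is available exactly as in Lemma \ref{genup}, so no new hypotheses are needed.
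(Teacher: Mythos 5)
Your proposal is correct and takes essentially the same route as the paper: there, too, the event $o\notin K_{(n)}$ is discarded via \eqref{except}, the coordinate corners are aligned with the principal curvature directions, and the vanishing principal curvature is used to upgrade the cap-volume bound to $V(\Theta_{i,t})\gg \varepsilon^{-(d+1)/2}t^{(d+1)/2}$ for $t$ below a threshold $\delta'$ depending on $y$ and $\varepsilon$, after which \eqref{Gamma} gives the bound $\ll\varepsilon$. Your formulation with $c(t)\to\infty$ and the split at $t_M$ is just a reparametrization of that same argument.
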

\proof In view of the estimate \eqref{except}, it is sufficient to prove that for 
any given $\varepsilon>0$, 
\begin{equation}
\label{closeeps}
\int_0^{n^{\frac{-1}{d+1}}}n^{\frac{2}{d+1}}
\PP_\varrho\left(y_t\not\in K_{(n)},o\in K_{(n)} \right)\,dt \ll\varepsilon, 
\end{equation}
if $n$ is sufficiently large. 
We choose the coordinate axes in $u(y)^\bot$
parallel to the principal curvature directions of $K$ at $y$, and denote by 
$\Theta'_1,\ldots,\Theta'_{2^{d-1}}$ the corresponding coordinate corners.
For $i=1,\ldots,2^{d-1}$ and $t\in(0,n^{\frac{-1}{d+1}})$, let
$$
\Theta_{i,t}:=C(y,t)\cap\left(y_t+\left[\Theta'_i,\R^+y\right]
\right),
$$
and hence, if $n$ is large enough, then
$$
\int_{\Theta_{i,t}}\varrho(x)\,dx\gg V(\Theta_{i,t}),
$$
since $\varrho$ is continuous and positive near $\partial K$. 
If $y_t\not\in K_n$ and $o\in K_{(n)}$, then there exists a 
halfspace $H^-$ which contains $K_{(n)}$ and for which $y_t\in\partial H^-$. Moreover, 
for some $i\in\{1,\ldots,2^{d-1}\}$ the interior of $H^-$ is 
disjoint from $\Theta_{i,t}$. Hence, as in the proof of Lemma \ref{genup},
\begin{equation}
\label{zerocurvetheta}
\PP_\varrho\left(y_t\not\in K_{(n)},o\in K_{(n)} \right)\ll
\sum_{i=1}^{2^{d-1}}\left(1-\gamma_6 V(\Theta_{i,t})\right)^n.  
\end{equation}
Since $\partial K$ is twice differentiable in the generalized sense
at $y$, we have $r(y)>0$. By assumption, $\kappa(y)=0$, therefore  
one principal curvature at $y$ is zero, and hence
less than $\varepsilon^{d+1}r(y)^{d-2}$. In particular, 
there exists $\delta'\in(0,\delta)$, which by (\ref{yu(y)}) depends 
only on $y$ and $\varepsilon$, such that
if $i\in\{1,\ldots,2^{d-1}\}$ and $t\in(0,\delta')$, then
$$
{\cal H}^{d-1}\left((y_t+\Theta'_i)\cap K\right)
\gg \sqrt{t\varepsilon^{-(d+1)}r(y)^{-(d-2)}}
\cdot\sqrt{tr(y)}^{d-2}.
$$
We deduce $V(\Theta_{i,t})\gg \varepsilon^{-\frac{d+1}2}t^{\frac{d+1}2}$. 
Therefore (\ref{closeeps}) follows from (\ref{Gamma})
and (\ref{zerocurvetheta}).
\proofbox

Next we consider the case of a normal boundary point $y\in\partial K$ with 
$\kappa(y)>0$. 
First, we prove that $J_\varrho(y)$ depends
only on the random points near $y$
(see Lemma~\ref{pointsneary}). In a second step, we
compare the simplified expression obtained for $J_\varrho(y)$ with 
the corresponding expression which is obtained if $K$ is a ball. 

We start by reparametrizing $y_t$ in terms of the
probability measure of the corresponding cap.
 For $t\in(0,n^{\frac{-1}{d+1}})$, where $n\ge n_0$ is sufficiently large so that 
 $\varrho$ is positive and continuous on $C(y,t)$, for all $y\in\partial K$, we put
$$
\tilde{y}_s:=y_t
$$
where for given $s>0$ (sufficiently small) the corresponding $t=t(s)$ is determined by the relation
\begin{equation}\label{defst}
s=\int_{C(y,t)}\varrho(x)\,dx. 
\end{equation}
It is easy to see that the right-hand side of \eqref{defst} is a continuous 
and strictly increasing function $s=s(t)$ of $t$, if $t>0$ is sufficiently small. This 
implies that for a given $s>0$ (sufficiently small) there is a unique $t(s)$ such that \eqref{defst} 
is satisfied. 

Moreover, observe that 
\begin{equation}\label{dustar}
\frac{ds}{dt}=\langle u(y),y\rangle\int_{H(y,t)\cap K}\varrho(x)\, \Hde(dx)
\end{equation}
for $t\in (0,n^{\frac{-1}{d+1}})$. 
We further define  
$$\widetilde{C}(y,s):=C(y,t)\qquad\text{and}\qquad 
\widetilde{H}(y,s):=\{x\in \R^d:\langle u(y),x\rangle=
\langle u(y),\tilde{y}_s\rangle\},
$$ 
where $t=t(s)$.

Let $Q$ denote the second fundamental form of $\partial K$ at $y$ (cf.\ \eqref{diff}), 
considered as a function on $u(y)^\perp$. We define 
$$
E:=\{z\in u(y)^\perp:Q(z)\le 1\}.
$$
and put $u:=u(y)$. Choosing a suitable 
orthonormal basis $v_1,\ldots,v_{d-1}$ of $u(y)^\bot$, 
we have 
$$
Q(z)=\sum_{i=1}^{d-1}k_i(y)z_i^2,
$$ where $k_i(y)$, $i=1,\ldots,d-1$, are the generalized principal curvatures 
of $K$ at $y$ and where $z=z_1v_1+\ldots +z_{d-1}v_{d-1}$. Since $y$ is a 
normal boundary point of $K$, there is a nondecreasing function $\mu:(0,\infty)\to \R$ with 
$\lim_{r\to 0^+}\mu(t)=1$ such that 
\begin{equation}\label{section}
\frac{\mu(r)^{-1}}{\sqrt{2r}}(K\cap H(u,h(K,u)-r)+ru-y)\subset E\subset 
\frac{\mu(r)}{\sqrt{2r}}(K\cap H(u,h(K,u)-r)+ru-y).
\end{equation}
In the following, $\mu_i:(0,\infty)\to \R$, $i=1,2,\ldots$, always denote nondecreasing functions 
with $\lim_{r\to 0^+}\mu(t)=1$. Applying \eqref{section} and Fubini's theorem, we get
$$
V(K\cap H^+(u,h(K,u)-r))=\mu_1(r)\frac{(2r)^{\frac{d+1}{2}}}{d+1}
\alpha_{d-1}\kappa(y)^{-\frac{1}{2}},
$$
which yields that
\begin{equation}\label{relst}
s(t)=\mu_2(t)\frac{(2t\langle y,u\rangle)^{\frac{d+1}{2}}}{d+1}
\alpha_{d-1}\kappa(y)^{-\frac{1}{2}}\varrho(y),
\end{equation}
since $\varrho$ is continuous at $y$. Moreover, defining
$$
\eta:=(d+1)^{\frac1{d+1}}\alpha_{d-1}^{-\frac{1}{d+1}}\varrho(y)^{\frac{-1}{d+1}}
\kappa(y)^{\frac1{2(d+1)}},
$$
we obtain
\begin{equation}
\label{asymparea}
\lim_{s\to 0^+}s^{\frac{-1}{d+1}}[(\widetilde{H}(y,s)\cap K)-\tilde{y}_s]
=\eta\cdot E
\end{equation}
in the sense of the Hausdorff metric on compact convex sets
(see Schneider \cite{Sch93} or  Gruber \cite{Gru07}). Here we also use that 
\begin{equation}\label{vertical}
\lim_{s\to 0^+}s^{-\frac{1}{d+1}}(\tilde{y}_s-\langle \tilde{y}_s,u\rangle u)=o. 
\end{equation}
Now it follows from \eqref{dustar} and \eqref{asymparea} that (\ref{limitform}) turns into
$$
J_\varrho(y)=
(d+1)^{-\frac{d-1}{d+1}}\alpha_{d-1}^{-\frac{2}{d+1}}\varrho(y)^{\frac{-2}{d+1}} \kappa(y)^{\frac1{d+1}}
 \lim_{n\to\infty}\int_0^{g(n,y)}n^{\frac2{d+1}}
\PP_\varrho\left(\tilde{y}_s\not\in K_{(n)} \right)s^{-\frac{d-1}{d+1}}\,ds,
$$
where
$$
\lim_{n\to\infty}n^{\frac12}g(n,y)=
(d+1)^{-1}\alpha_{d-1}{\varrho(y)(2\langle u(y),y\rangle)^{\frac{d+1}2} }
\kappa(y)^{-\frac12} .
$$

The rest of the proof is devoted to identifying the asymptotic behavior of the integral. 
First, we adjust the domain of integration and the integrand in a suitable way. In a second step, 
the resulting expression is compared to the case where $K$ is the unit ball. 
We recall that $x_1,\ldots,x_n$ are random points in $K$, 
and we put $\Xi_n:=\{x_1,\ldots,x_n\}$, and hence $K_{(n)}=[\Xi_n]$. Let
$\#X$ denote the cardinality of a finite set $X\subset\R^d$.

\begin{lemma}
\label{pointsneary}
For $\varepsilon\in(0,1)$, there exist
$\alpha,\beta>1$ and an integer $k>1$, depending only on $\varepsilon$
and $d$, with the following property.
If $y\in\partial K$ is a normal boundary point of $K$ with $\kappa(y)>0$ and if $n>n_0$, 
where $n_0$ depends on $\varepsilon,y,K,\varrho$,
then
$$
\int_0^{g(n,y)}
\PP_\varrho\left(\tilde{y}_s\not\in K_{(n)} \right)s^{-\frac{d-1}{d+1}}\,ds
=
\int_{\frac{\varepsilon^{(d+1)/2}}n}^{\frac{\alpha}n}
\varphi(K,y,\varrho,\varepsilon,s)s^{-\frac{d-1}{d+1}}\,ds
+O\left(\frac{\varepsilon}{n^{\frac2{d+1}}}\right),
$$
where
$$
\varphi(K,y,\varrho,\varepsilon,s)=
\PP_\varrho\left(\left(\tilde{y}_s\not\in 
[\widetilde{C}(y,\beta s)\cap\Xi_n]\right)
\mbox{\rm and } 
\left(\#(\widetilde{C}(y,\beta s)\cap\Xi_n)\leq k\right)\right).
$$
\end{lemma}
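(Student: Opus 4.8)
The strategy is to localize the event $\{\tilde y_s\notin K_{(n)}\}$ both spatially (only random points near $y$ matter) and combinatorially (only a bounded number of them matter), while controlling the part of the $s$-integral near $0$ and near $g(n,y)$. Throughout we work in the reparametrization $s=\int_{C(y,t)}\varrho\,dx$, so $\PP_\varrho(\widetilde C(y,s'))=s'$ for small $s'$, and we use \eqref{asymparea} to know that the rescaled cap $s^{-1/(d+1)}[(\widetilde H(y,s)\cap K)-\tilde y_s]$ converges to $\eta E$, an ellipsoid of fixed shape. First I would dispose of the contribution of $s\in(0,\varepsilon^{(d+1)/2}/n]$: on this range one trivially bounds $\PP_\varrho(\tilde y_s\notin K_{(n)})\le 1$, and $\int_0^{\varepsilon^{(d+1)/2}/n}s^{-(d-1)/(d+1)}\,ds = \tfrac{d+1}{2}(\varepsilon^{(d+1)/2}/n)^{2/(d+1)}\ll \varepsilon\, n^{-2/(d+1)}$, which is absorbed in the error term. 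For the tail $s\ge \alpha/n$ (with $\alpha>1$ to be chosen large), I would use the geometric estimate behind Lemma~\ref{genup} — more precisely the bound $\PP_\varrho(\tilde y_s\notin K_{(n)},o\in K_{(n)})\ll \sum_i(1-\gamma\,V(\Theta_{i,t}))^n$ together with $V(\Theta_{i,t})\gg s$ (which holds in the reparametrized variable since a positive fraction of the cap $\widetilde C(y,\beta s)$ lies in each coordinate corner, by \eqref{asymparea}) — to get $\PP_\varrho(\tilde y_s\notin K_{(n)})\ll e^{-\gamma n s}+2^d(1-\gamma_5)^n$, so that $\int_{\alpha/n}^{g(n,y)} e^{-\gamma ns}s^{-(d-1)/(d+1)}\,ds\ll n^{-2/(d+1)}\int_{\gamma\alpha}^\infty e^{-r}r^{-(d-1)/(d+1)}\,dr$, which can be made $<\varepsilon\,n^{-2/(d+1)}$ by taking $\alpha$ large; the exceptional term $\{o\notin K_{(n)}\}$ contributes only $O(g(n,y)(1-\gamma_5)^n)=o(n^{-2/(d+1)})$.

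**Spatial and combinatorial localization.** On the remaining window $s\in[\varepsilon^{(d+1)/2}/n,\alpha/n]$ the task is to replace $\PP_\varrho(\tilde y_s\notin K_{(n)})$ by $\varphi(K,y,\varrho,\varepsilon,s)$, i.e. to show
$$
\bigl|\PP_\varrho\bigl(\tilde y_s\notin K_{(n)}\bigr)-\varphi(K,y,\varrho,\varepsilon,s)\bigr|\ll \varepsilon
\qquad\text{uniformly for } s\in[\varepsilon^{(d+1)/2}/n,\alpha/n].
$$
There are two differences between the two events. For the \emph{spatial} one: if $\tilde y_s\notin K_{(n)}=[\Xi_n]$ then there is a hyperplane through $\tilde y_s$ with $K_{(n)}$ on one side; if in addition a point of $\Xi_n$ lies outside $\widetilde C(y,\beta s)$, one can argue (using \eqref{asymparea} so that $\widetilde C(y,\beta s)$ is, up to $(1+o(1))$ factors, a cap of the osculating paraboloid of height $\asymp s^{2/(d+1)}$ and width $\asymp s^{1/(d+1)}$, and $\beta>1$) that the separating hyperplane can be taken to cut off a sub-cap of $\widetilde C(y,\beta s)$ that still contains a set of $\PP_\varrho$-measure $\ge c\,s$ disjoint from $\Xi_n$; hence $\PP_\varrho$ of this extra part is $\le (1-cs)^n$ summed/integrated as above, contributing $\ll e^{-cns}$, again integrable to $\ll \varepsilon n^{-2/(d+1)}$ after choosing $\beta$ suitably — alternatively one simply notes $\tilde y_s\notin[\widetilde C(y,\beta s)\cap\Xi_n]$ is implied by $\tilde y_s\notin K_{(n)}$ once we restrict attention to points in $\widetilde C(y,\beta s)$, so the genuine discrepancy is the event that some point outside the small cap is ``needed'', which has the exponentially small probability just described. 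For the \emph{combinatorial} one: on the event $\tilde y_s\notin[\widetilde C(y,\beta s)\cap\Xi_n]$, the number $N:=\#(\widetilde C(y,\beta s)\cap\Xi_n)$ is $\mathrm{Binomial}(n,\PP_\varrho(\widetilde C(y,\beta s)))$ with mean $\beta s n\le \alpha\beta$, a bounded quantity; so $\PP_\varrho(N>k)\le \PP_\varrho(\mathrm{Bin}(n,\beta s)>k)$, and by a standard Poisson/Chernoff bound this is $<\varepsilon$ once $k=k(\varepsilon,d)$ is chosen large enough (depending only on the bound $\alpha\beta$ on the mean, hence only on $\varepsilon$ and $d$). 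Intersecting/removing these two events changes the probability by at most the sum of their (small) probabilities, uniformly in $s$, and integrating $s^{-(d-1)/(d+1)}$ over a window of length $\ll 1/n$ multiplies by $O(n^{-2/(d+1)})$, giving the stated $O(\varepsilon n^{-2/(d+1)})$.

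**Assembling the pieces.** Putting the three ranges together:
$$
\int_0^{g(n,y)}\PP_\varrho\bigl(\tilde y_s\notin K_{(n)}\bigr)s^{-\frac{d-1}{d+1}}\,ds
=\int_{\varepsilon^{(d+1)/2}/n}^{\alpha/n}\varphi(K,y,\varrho,\varepsilon,s)\,s^{-\frac{d-1}{d+1}}\,ds+O\!\left(\frac{\varepsilon}{n^{2/(d+1)}}\right),
$$
with $\alpha,\beta,k$ depending only on $\varepsilon$ and $d$ and with $n_0$ depending on $\varepsilon,y,K,\varrho$ (the dependence on $y,K,\varrho$ entering only through how large $n$ must be for $\widetilde C(y,\beta s)$ to lie in the region where $\varrho$ is continuous and positive and for the $o(1)$ terms in \eqref{relst}–\eqref{asymparea} to be $\le\varepsilon$). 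The \textbf{main obstacle} is the spatial localization step: one must argue that replacing $K_{(n)}$ by $[\widetilde C(y,\beta s)\cap\Xi_n]$ — equivalently, that points of $\Xi_n$ far from $y$ are irrelevant for whether $\tilde y_s$ is separated — costs only an exponentially small probability, and this requires using \eqref{asymparea} to compare the cap $\widetilde C(y,\beta s)$ with a paraboloidal cap and then producing, for any hyperplane through $\tilde y_s$ that could separate $\tilde y_s$ from a point outside $\widetilde C(y,\beta s)$, a fixed-proportion sub-cap of positive $\PP_\varrho$-measure of order $s$ on which no random point falls; the geometry here is essentially that of Lemma~\ref{genup} transported to the reparametrized scale, but keeping the constants uniform in $y$ over normal boundary points with $\kappa(y)>0$ (equivalently $r(y)>0$) is what needs care.
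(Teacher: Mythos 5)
Your decomposition is the same as the paper's (cut off $s\le\varepsilon^{(d+1)/2}/n$ trivially, kill the tail $s\ge\alpha/n$ by the coordinate-corner estimate and a large choice of $\alpha$, bound $\#(\widetilde{C}(y,\beta s)\cap\Xi_n)>k$ by a binomial/Poisson tail, and control the discrepancy event $\{\tilde{y}_s\in K_{(n)},\ \tilde{y}_s\notin[\widetilde{C}(y,\beta s)\cap\Xi_n]\}$ by exhibiting a forced empty region), but the one step that carries the real content of the lemma is exactly where your sketch is quantitatively insufficient. You claim that when a point outside $\widetilde{C}(y,\beta s)$ is ``needed'', some region of $\PP_\varrho$-measure $\ge c\,s$ must be empty, giving an error $\ll e^{-cns}$. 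On the window the product $ns$ runs down to $\varepsilon^{(d+1)/2}$, so with a constant $c$ that does not grow with $\beta$ the bound $e^{-cns}$ is of order $1$ near the lower endpoint, and $\int_{\varepsilon^{(d+1)/2}/n}^{\alpha/n}e^{-cns}s^{-\frac{d-1}{d+1}}\,ds\asymp_c n^{-\frac{2}{d+1}}$, not $\varepsilon\,n^{-\frac{2}{d+1}}$. What is needed — and what the paper's construction of the sets $\widetilde{\Omega}_{i,s}=[\tilde{y}_{\sqrt{\beta}s}+w_i,\,K\cap(\tilde{y}_s+\Theta_i')]$ delivers in \eqref{Omegavol} — is an empty region whose measure is $\ge c\,\beta^{\frac{1}{d+1}}s$, i.e.\ a constant that can be made arbitrarily large by increasing $\beta$: one compares the levels $s$, $\sqrt{\beta}s$, $\beta s$, shows via \eqref{asymparea} that the exit point $q$ of the segment $[\tilde{y}_s,b]$ at level $\sqrt{\beta}s$ stays within $O(s^{1/(d+1)})$ of the axis (so $\tilde{y}_{\sqrt{\beta}s}+w_i\in q+\Theta_i'\subset H^-$ for the right corner), and gains the factor $\beta^{1/(d+1)}$ from the height gap between the levels $s$ and $\sqrt{\beta}s$. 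Your phrase ``after choosing $\beta$ suitably'' gestures at this, but nothing in your sketch produces the $\beta$-dependence of the constant, and without it the discrepancy estimate fails at the bottom of the window; this is the missing idea rather than a routine detail.

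A secondary, repairable bookkeeping issue: your error accounting treats the window as having $s$-integral $O(n^{-2/(d+1)})$ with an absolute constant, but $\int_{\varepsilon^{(d+1)/2}/n}^{\alpha/n}s^{-\frac{d-1}{d+1}}\,ds\asymp(\alpha/n)^{2/(d+1)}$ and $\alpha=\alpha(\varepsilon)\to\infty$ as $\varepsilon\to0$. Hence a uniform-in-$s$ error of size $\varepsilon$ only yields $O(\varepsilon\,\alpha^{2/(d+1)}n^{-2/(d+1)})$, not the stated $O(\varepsilon\,n^{-2/(d+1)})$. The paper handles this by first fixing $\alpha$ and then choosing $\beta$ and $k$ so that the uniform errors in its steps (ii) and (iii) are $\le\varepsilon\,\alpha^{-2/(d+1)}$; you would need the same ordering of the choices of $\alpha$, $\beta$, $k$ (all still depending only on $\varepsilon$ and $d$) to recover the asserted error term. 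Also note that uniformity in $y$ is not required, since $n_0$ may depend on $y$; only $\alpha,\beta,k$ must be independent of $y$, $K$ and $\varrho$.
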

\proof Let $Q$ be the second fundamental form of $\partial K$ at the normal 
boundary point $y$, and let
$v_1,\ldots,v_{d-1}$ be an orthonormal basis of $u(y)^\bot$ with
 respect to $Q$, as described above. Let 
$\Theta'_1,\ldots,\Theta'_{2^{d-1}}$ be the corresponding coordinate corners,
and, for $i=1,\ldots,2^{d-1}$ and for $s\in(0,n^{-1/2})$, put 
$$
\widetilde{\Theta}_{i,s}:=
\widetilde{C}(y,s)\cap\left(\tilde{y}_s+\left[\Theta'_i,\R^+y\right]
\right).
$$
Let $A_s$, $s>0$, be the affine map of $\R^d$ with $A_s(y)=y$ for which the associated linear map 
$\widetilde{A}_s$ is determined by $\widetilde{A}_s(v)=s^{\frac{1}{d+1}}v$, for $v\in u^\perp$, and 
$\widetilde{A}_s(u)=s^{\frac{2}{d+1}}u$. Then $\det(\widetilde{A}_s)=s$ and 
${A}_{s^{-1}}(\widetilde{C}(y,s))$ converges in the Hausdorff metric as $s\to 0^+$ to 
the cap $\widetilde{C}(y)$ of the osculating paraboloid of $K$ at $y$ having volume $\varrho(y)^{-1}$. 
Here we use that $\varrho$ is continuous at $y$, $\varrho(y)>0$ and relation \eqref{defst}. Let $\lambda>0$ be 
such that $\tilde{y}:=y-\lambda u\in \partial \widetilde{C}(y)$. Then $A_{s^{-1}}(\widetilde{\Theta}_{i,s})$ 
converges in the Hausdorff metric as $s\to 0^+$ to $\widetilde{C}(y)\cap(\tilde{y}+[\Theta'_i,\R^+u])$, 
since \eqref{vertical} is satisfied. Using again that $\varrho$ is continuous and positive at $y$, 
we deduce that
\begin{eqnarray*}
\lim_{s\to 0^+}s^{-1}\int_{\widetilde{\Theta}_{i,s}}\varrho(x)\,dx&=&
\lim_{s\to 0^+}s^{-1}V(\widetilde{\Theta}_{i,s})\varrho(y)\\
&=&\lim_{s\to 0^+}V(A_{s^{-1}}(\widetilde{\Theta}_{i,s}))\varrho(y)\\
&=&V(\widetilde{C}(y)\cap(\tilde{y}+[\Theta'_i,\R^+u]))\varrho(y)\\
&=&2^{-(d-1)}V(\widetilde{C}(y))\varrho(y)\\
&=&2^{-(d-1)}\lim_{s\to 0^+}V(A_{s^{-1}}(\widetilde{C}(y,s))\varrho(y)\\
&=&2^{-(d-1)}\lim_{s\to 0^+}s^{-1}V(\widetilde{C}(y,s))\varrho(y)\\
&=&2^{-(d-1)}\lim_{s\to 0^+}s^{-1}\int_{\widetilde{C}(y,s)}\varrho(x)\,dx\\
&=&2^{-(d-1)},
\end{eqnarray*}
that is
\begin{equation}
\label{thetavol}
\lim_{s\to 0^+}s^{-1}\int_{\widetilde{\Theta}_{i,s}}\varrho(x)\,dx=2^{-(d-1)}.
\end{equation}
Let $\alpha>1$ be chosen such that
$$
2^{d-1+2d/(d+1)}\int_{2^{-d}\alpha}^\infty e^{-x}x^{\frac{2}{d+1}-1}\, dx \le \varepsilon.
$$
Then we first choose $\beta\ge (16(d-1))^{d+1}$ such that 
\begin{align*}
2^{d-1}e^{-d^{-1}2^{-(d+3)}\beta^{\frac1{d+1}}\varepsilon^{\frac{d+1}2}}& \le 
\frac{\varepsilon}{\alpha^{\frac2{d+1}} } , \\
\intertext{and then we fix an integer $k>1$ such that}
\frac{(\alpha\beta)^k}{k!}&\le \frac{\varepsilon}{\alpha^{\frac2{d+1}} }.
\end{align*}
Lemma~\ref{pointsneary} follows from the following three statements,
which we will prove assuming that $n$ is sufficiently large.
\begin{description}
\item{(i)} 
$$\int_0^{g(n,y)}\PP_\varrho\left(\tilde{y}_s\not\in K_{(n)} \right)s^{-\frac{d-1}{d+1}}\,ds
 =\int_{\frac{\varepsilon^{(d+1)/2}}n}^{\frac{\alpha}n}
\PP_\varrho\left(\tilde{y}_s\not\in K_{(n)} \right)s^{-\frac{d-1}{d+1}}\,ds
+O\left(\frac{\varepsilon}{n^{\frac2{d+1}}}\right).
$$
\item{(ii)} If $\frac{\varepsilon^{(d+1)/2}}n<s<\frac{\alpha}n$, then
$$
\PP_\varrho\left(\#\left(\widetilde{C}(y,\beta s)\cap\Xi_n\right)\geq k \right)
=O\left(\frac{\varepsilon}{\alpha^{\frac2{d+1}}}\right).
$$
\item{(iii)} If $\frac{\varepsilon^{\frac{d+1}2}}n<s<\frac{\alpha}n$, then
$$
\PP_\varrho\left(\tilde{y}_s\not\in K_{(n)} \right)=
\PP_\varrho\left(\tilde{y}_s\not\in
\left[\widetilde{C}(y,\beta s)\cap\Xi_n\right] \right)
+O\left(\frac{\varepsilon}{\alpha^{\frac2{d+1}}}\right).
$$
\end{description}

\noindent
To prove (i), we first observe that
$$
\int_0^{\frac{\varepsilon^{(d+1)/2}}n}
\PP_\varrho\left(\tilde{y}_s\not\in K_{(n)} \right)s^{-\frac{d-1}{d+1}}\,ds\leq
\int_0^{\frac{\varepsilon^{(d+1)/2}}n}s^{-\frac{d-1}{d+1}}\,ds\ll
\frac{\varepsilon}{n^{\frac2{d+1}}}.
$$
If $\frac{\alpha}n<s<g(n,y)$, $o\in K_{(n)}$, $\tilde{y}_s\not\in K_{(n)}$ and if $n$ is 
sufficiently large, then there is some $i\in\{1,\ldots,2^{d-1}\}$ such that 
$\widetilde{\Theta}_{i,s}\cap K_{(n)}=\emptyset$, 
and hence \eqref{except} and (\ref{thetavol}) yield 
\begin{equation}\label{pestimate}
\PP_\varrho\left(\tilde{y}_s\not\in K_{(n)} \right)\ll 
2^{d-1}(1-2^{-d}s)^n\le 2^{d-1}e^{-2^{-d}ns}.
\end{equation}
Therefore, by the definition of $\alpha$, we get
\begin{align*}
\int_{\frac{\alpha}{n}}^{g(n,y)}
\PP_\varrho\left(\tilde{y}_s\not\in K_{(n)} \right)s^{-\frac{d-1}{d+1}}\,ds
& \ll 2^{d-1}\int_{\frac{\alpha}{n}}^\infty e^{-2^{-d}ns}s^{\frac{2}{d+1}-1}\, ds\\
& = 2^{d-1}2^{2d/(d+1)}n^{-\frac{2}{d+1}}\int_{2^{-d}\alpha}^\infty e^{-x}
x^{\frac{2}{d+1}-1}\, dx\\
&\le \varepsilon\, n^{-\frac{2}{d+1}},
\end{align*}
which verifies (i).

Next (ii) simply follows from \eqref{defst} as if $s< \frac{\alpha}n$, then
$$
\PP_\varrho\left(\#\left(\widetilde{C}(y,\beta s)\cap
\Xi_n\right)\geq k \right)=\binom{n}{k}(\beta s)^k 
\le \binom{n}{k}\left(\frac{\alpha\beta}n\right)^k<
\frac{(\alpha\beta)^k}{k!}\le \frac{\varepsilon}{\alpha^{\frac2{d+1}}}.
$$

Now we prove (iii). To this end, for $s$ in the given range, our plan is to construct
sets $\widetilde{\Omega}_{1,s},\ldots,\widetilde{\Omega}_{2^{d-1},s}\subset K$ such that 
\begin{equation}
\label{Omegavol}
\int_{\widetilde{\Omega}_{i,s}}\varrho(x)\,dx\geq d^{-1}2^{-(d+3)}\beta^{\frac1{d+1}}s,\qquad
\mbox{  for \ }i=1,\ldots,2^{d-1},
\end{equation}
and if $\tilde{y}_s\in K_{(n)}$
but $\tilde{y}_s\not\in\left[\widetilde{C}(y,\beta s)\cap\Xi_n\right]$, 
then  $\Xi_n\cap\widetilde{\Omega}_{i,s}=\emptyset$ for some $i\in \{1,\ldots,2^{d-1}\}$. 

For $i=1,\ldots,2^{d-1}$, let $w_i\in \Theta'_i$ be the vector
whose coordinates (up to sign) in the basis $v_1,\ldots,v_{d-1}$ are
$$
w_i:=\left(\sqrt{\beta}s\right)^{\frac{1}{d+1}}\frac{\eta}{2\sqrt{d-1}}\left(\pm  
\frac{1}{\sqrt{k_1(y)}},\ldots, \pm\frac{1}{\sqrt{k_{d-1}(y)}}\right).
$$
Further, for $i=1,\ldots,2^{d-1}$ we define
$$
\widetilde{\Omega}_{i,s}=
[\tilde{y}_{\sqrt{\beta}\, s}+w_i,K\cap(\tilde{y}_s+\Theta'_i)].
$$
Then, if $s>0$ is small enough,  $\tilde{y}_{\sqrt{\beta}\, s}+w_i\in K$, and 
hence $\widetilde{\Omega}_{i,s}\subset K$. Here we use that 
$$
w_i\in(\sqrt{\beta}s)^{\frac{1}{d+1}}\frac{1}{2}\eta E
$$
and therefore by \eqref{asymparea}
$$
\tilde{y}_{\sqrt{\beta}s}+w_i\in\widetilde{H}(y,\sqrt{\beta}s)\cap K\subset K.
$$
Using that $\tilde{y}_s=(1-t)y$, where $s$ and $t$ are related by \eqref{relst}, 
and if $s,t>0$ are sufficiently small, we obtain
\begin{equation}\label{A}
\langle u(y),\tilde{y}_s-\tilde{y}_{\sqrt{\beta}\, s}\rangle
>\frac{\beta^{\frac1{d+1}}-1}2\,\langle u(y),y-\tilde{y}_s\rangle
>\frac{\beta^{\frac1{d+1}} }4\langle u(y),y-\tilde{y}_s\rangle,
\end{equation}
since $\beta\ge 2^{d+1}$. Moreover, we have
\begin{equation}\label{B}
\langle u(y),y-\tilde{y}_s\rangle
\cdot {\cal H}^{d-1}\left(K\cap(\tilde{y}_s+\Theta'_i)\right)\ge V(\widetilde{\Theta}_{i,s}).
\end{equation}
Combining \eqref{A}, \eqref{B},  (\ref{thetavol}) and the continuity of
$\varrho$ at $y$ with $\varrho(y)>0$, we deduce \eqref{Omegavol}, that is
\begin{eqnarray*}
\int_{\widetilde{\Omega}_{i,s}}\varrho(x)\,dx &\ge&
\frac{1}{\sqrt{2}}\frac{1}{d}\varrho(y)\langle u(y),\tilde{y}_s-\tilde{y}_{\sqrt{\beta}\, s}\rangle
{\cal H}^{d-1}\left(K\cap(\tilde{y}_s+\Theta'_i)\right)\\
&\ge&\frac{\beta^{\frac1{d+1}} }{4}\frac{1}{\sqrt{2}d}V(\widetilde{\Theta}_{i,s})\\
&\ge&\frac{\beta^{\frac1{d+1}} }{4}\frac{1}{{2}d}
\int_{\widetilde{\Theta}_{i,s}}\varrho(x)\,dx\\
&\ge &\frac{\beta^{\frac1{d+1}}s}{8d\,2^d}.
\end{eqnarray*}

It is still left to prove that
if $\tilde{y}_s\in K_{(n)}$
but $\tilde{y}_s\not\in \left[\widetilde{C}(y,\beta s)\cap\Xi_n\right]$, 
then  $\Xi_n\cap\widetilde{\Omega}_{i,s}=\emptyset$ for some $i\in\{1,\ldots,2^{d-1}\}$.
So we assume that $\tilde{y}_s\in K_{(n)}$
but $\tilde{y}_s\not\in \left[\widetilde{C}(y,\beta s)\cap\Xi_n\right]$. Then there exist 
$a\in \left[\widetilde{C}(y,\beta s)\cap\Xi_n\right]$
and $b\in K_{(n)}\setminus \widetilde{C}(y,\beta s)$ such that
$\tilde{y}_s\in [a,b]$, and hence 
there exists a hyperplane $H$ containing $\tilde{y}_s$ 
bounding the halfspaces $H^+$ and $H^-$ such that
$\widetilde{C}(y,\beta s)\cap\Xi_n\subset {\rm int}(H^+)$ and
$b\in \text{int}(H^-)$.

Next we show that there exists $q\in [\tilde{y}_s,b]$ such that 
\begin{equation}\label{stern2}
q\in H^-\cap \left(\tilde{y}_{\sqrt{\beta}\, s}+
\frac{\eta}{2\sqrt{d-1}}(\sqrt{\beta}s)^{\frac{1}{d+1}} E\right).
\end{equation} 
In fact, define $q:=[\tilde{y}_s,b]\cap \widetilde{H}(y,\sqrt{\beta} s)$ and 
$q':=[\tilde{y}_s,b]\cap \widetilde{H}(y,{\beta} s)$. Since $a\in H^+$ and $\tilde{y}_s\in H$, 
it follows that $q\in H^-$. 
From (\ref{asymparea}) we get 
\begin{equation}\label{include}
\widetilde{H}(y,\beta s)\cap K\subset 
\tilde{y}_{\beta s}+2\beta^{\frac1{d+1}}s^{\frac{1}{d+1}}\eta E.
\end{equation}
Applying \eqref{relst}, we deduce
\begin{eqnarray}
\langle u(y),\tilde{y}_s-\tilde{y}_{\beta\, s}\rangle
&<&  \frac{\beta^{\frac{2}{d+1}}}{\beta^{\frac{2}{d+1}}-1} 
 \cdot\frac{\beta^{\frac2{d+1}}-1}{\beta^{\frac2{d+1}}-\beta^{\frac{1}{d+1}}}
\langle u(y),\tilde{y}_{\sqrt{\beta}\, s}-\tilde{y}_{\beta s}\rangle\nonumber\\
&<&\frac{\beta^{\frac{1}{d+1}}}{\beta^{\frac{1}{d+1}}-1} 
\langle u(y),\tilde{y}_{\sqrt{\beta}\, s}-\tilde{y}_{\beta s}\rangle.\label{estimbeta}
\end{eqnarray}
Furthermore, elementary geometry yields
$$
\frac{\|q-\tilde{y}_{\sqrt{\beta}s}\|}{\|q'-\tilde{y}_{\beta s}\|}=\frac{\langle u,\tilde{y}_s-
\tilde{y}_{\sqrt{\beta}s}\rangle}{\langle u,\tilde{y}_s-\tilde{y}_{\beta s}\rangle}.
$$
Then \eqref{include} and \eqref{estimbeta} imply that
\begin{eqnarray*}
q&\in&\tilde{y}_{\sqrt{\beta}s}+\frac{\langle u,\tilde{y}_s-
\tilde{y}_{\sqrt{\beta}s}\rangle}{\langle u,\tilde{y}_s-\tilde{y}_{\beta s}\rangle}
\cdot 2(\beta s)^{\frac{1}{d+1}}\eta E\\
&\subset&\tilde{y}_{\sqrt{\beta}s}+\left(1-\frac{\langle u,\tilde{y}_{\sqrt{\beta}s}-
\tilde{y}_{{\beta}s}\rangle}{\langle u,\tilde{y}_s-\tilde{y}_{\beta s}\rangle}\right)
\cdot 2\beta^{\frac{1}{d+1}}s^{\frac{1}{d+1}}\eta E\\
&\subset&\tilde{y}_{\sqrt{\beta}s}+2s^{\frac{1}{d+1}}\eta E\\
&\subset& \tilde{y}_{\sqrt{\beta}s}+\frac{1}{2\sqrt{d-1}}(\sqrt{\beta}s)^{\frac{1}{d+1}}\eta E,
\end{eqnarray*}
where $\beta\ge (16(d-1))^{d+1}$ is used for the last inclusion. 
 Now there exists some $i\in\{1,\ldots,2^{d-1}\}$ such that $\tilde{y}_s+\Theta_i'\subset H^-$,
and hence  $q+\Theta_i'\subset H^-$. By \eqref{stern2} this finally yields 
$$\tilde{y}_{\sqrt{\beta}s}+w_i \subset q+\Theta'_i\subset H^-.
$$
Therefore we obtain $\widetilde{\Omega}_{i,s}\cap \Xi_n=\emptyset$.

Finally, (iii) follows as
if $\frac{\varepsilon^{\frac{d+1}2}}n<s<\frac{\alpha}n$, then
\begin{eqnarray*}
&&0\le \PP_\varrho\left(\tilde{y}_s\not\in\left[\widetilde{C}(y,\beta s)\cap
\Xi_n\right] \right)
-\PP_\varrho\left(\tilde{y}_s\not\in K_{(n)} \right)\\
& &\qquad \qquad\leq
\sum_{i=1}^{2^{d-1}}(1-\int_{\widetilde{\Omega}_{i,s}}\varrho(x)\,dx)^n\\
&&\qquad \qquad\le \sum_{i=1}^{2^{d-1}}e^{-n\int_{\widetilde{\Omega}_{i,s}}\varrho(x)\,dx}\\
&&\qquad \qquad\le 2^{d-1}e^{-d^{-1}2^{-(d+3)}{\beta}^{\frac{1}{d+1}}\,\varepsilon^{\frac{d+1}2}}\\
&&\qquad \qquad\le {\varepsilon}\,{\alpha^{-\frac2{d+1}} },
\end{eqnarray*}
by the choice of $\beta$.
\mbox{ }\proofbox

\bigskip

\noindent
{\bf Remark} As a consequence of the proof of Lemma \ref{pointsneary}, it follows that 
\begin{equation}
\label{limitform0}
J_\varrho(y)=
(d+1)^{-\frac{d-1}{d+1}}\alpha_{d-1}^{-\frac{2}{d+1}}\varrho(y)^{\frac{-2}{d+1}}
 \kappa(y)^{\frac1{d+1}} 
\lim_{n\to\infty}\int_0^{n^{-1/2}}n^{\frac2{d+1}}
\PP_\varrho\left(\tilde{y}_s\not\in K_{(n)} \right)s^{-\frac{d-1}{d+1}}\,ds.
\end{equation}
In fact, since $g(n,y)\ll n^{-1/2}$, it is sufficient to show that
$$
\lim_{n\to\infty}n^{\frac{2}{d+1}}\int_{c_1n^{-1/2}}^{c_2n^{-1/2}}
\PP_\varrho\left(\tilde{y}_s\not\in K_{(n)} \right)s^{-\frac{d-1}{d+1}}\,ds=0
$$
for any two constants $0<c_1\le c_2<\infty$. Since the estimate \eqref{pestimate} 
can be applied, we get
\begin{eqnarray*}
n^{\frac{2}{d+1}}\int_{c_1n^{-1/2}}^{c_2n^{-1/2}}
\PP_\varrho\left(\tilde{y}_s\not\in K_{(n)} \right)s^{-\frac{d-1}{d+1}}\,ds
&\ll& n^{\frac{2}{d+1}}\int_{c_1n^{-1/2}}^{c_2n^{-1/2}}
e^{-2^{-d}ns}s^{\frac{2}{d+1}-1}\,ds \\
&\ll& \int_{2^{-d}c_1n^{1/2}}^{2^{-d}c_2n^{1/2}}
e^{-r}r^{\frac{2}{d+1}-1}\,dr ,
\end{eqnarray*}
from which the conclusion follows.

\bigskip

Subsequently, we write ${\bf 1}$ to denote the constant
one function on $\R^d$.
For the unit ball $B^d$, we recall that $B^d_{(n)}$
denotes the convex hull of $n$ random points 
distributed uniformly and independently 
in $B^d$. We fix a point $w\in\partial B^d$, and for $s\in (0,\frac12)$,
define $\tilde{w}_s:=t\cdot w$, where $t\in (0,1)$ is chosen such that
$$
s=\alpha_d^{-1}\cdot V(\{x\in B^d:\,\langle x,w\rangle\geq \langle \tilde{w}_s,w\rangle\}).
$$
A classical result due to  J.A. Wieacker \cite{Wie78}
is that 
$$
\lim_{n\to\infty}n^{\frac2{d+1}}\E_{{\bf 1},B^d}
V(B^d\setminus B^d_{(n)})=c_d\,\omega_d\,\alpha_d^{\frac{2}{d+1}},
$$
where the constant $c_d$ is given in (\ref{wieackercons}).
It follows from (\ref{limitform}), (\ref{limitform0}) and the preceding remark that
\begin{equation}
\label{ball}
\lim_{n\to\infty}\int_0^{n^{-1/2}}n^{\frac2{d+1}} 
\PP_{{\bf 1},B^d}\left(\tilde{w}_s\not\in B^d_{(n)} \right)
s^{-\frac{d-1}{d+1}}\,ds 
=c_d\,(d+1)^{\frac{d-1}{d+1}}\alpha_{d-1}^{\frac{2}{d+1}}.
\end{equation}
We are now going to show that the same limit is obtained if $B^d$ is 
replaced by the convex body $K$ and if a normal boundary point $y$ of $K$ 
with positive Gauss curvature is considered instead of $w\in \partial B^d$.

\begin{lemma}
\label{compareball}
If $y\in\partial K$ is a normal boundary point of $K$ satisfying $\kappa(y)>0$, then
$$
\lim_{n\to\infty}\int_0^{n^{-1/2}}n^{\frac2{d+1}}
\PP_\varrho\left(\tilde{y}_s\not\in K_{(n)} \right)s^{-\frac{d-1}{d+1}}\,ds
=c_d(d+1)^{\frac{d-1}{d+1}}\alpha_{d-1}^{\frac{2}{d+1}}.
$$
\end{lemma}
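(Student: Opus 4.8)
The plan is to transfer the problem, via the affine maps $A_s$ introduced in the proof of Lemma~\ref{pointsneary}, from the convex body $K$ near the normal boundary point $y$ to the osculating paraboloid of $K$ at $y$, and then to observe that the same procedure applied to $B^d$ near $w$ leads, after the same affine normalization, to \emph{the same} paraboloid cap problem; comparison with \eqref{ball} then finishes the argument. Concretely, I would first invoke Lemma~\ref{pointsneary}: for a prescribed $\varepsilon\in(0,1)$ it produces $\alpha,\beta>1$ and $k\in\N$ so that the integral $\int_0^{n^{-1/2}}n^{2/(d+1)}\PP_\varrho(\tilde y_s\notin K_{(n)})s^{-(d-1)/(d+1)}\,ds$ agrees, up to $O(\varepsilon)$, with $\int_{\varepsilon^{(d+1)/2}/n}^{\alpha/n} n^{2/(d+1)}\varphi(K,y,\varrho,\varepsilon,s)s^{-(d-1)/(d+1)}\,ds$, where $\varphi$ only involves the cap $\widetilde C(y,\beta s)$, the event $\tilde y_s\notin[\widetilde C(y,\beta s)\cap\Xi_n]$, and the event $\#(\widetilde C(y,\beta s)\cap\Xi_n)\le k$. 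The point of this reduction is that $\varphi$ depends on at most $k$ random points lying in a cap of probability content $\beta s=O(1/n)$; conditioning on $j\le k$ of the $n$ points falling in $\widetilde C(y,\beta s)$ and using that these $j$ points are i.i.d.\ with density proportional to $\varrho$ restricted to that cap, one rewrites $n^{2/(d+1)}\varphi$ as a sum over $j\le k$ of binomial weights $\binom{n}{j}(\beta s)^j(1-\beta s)^{n-j}$ times a geometric probability $p_j(K,y,\varrho,s)$ that $\tilde y_s$ is not in the convex hull of $j$ points distributed in $\widetilde C(y,\beta s)$.

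Next I would rescale. Substituting $s=\sigma/n$ turns the integral into $\int_{\varepsilon^{(d+1)/2}}^{\alpha}\sigma^{-(d-1)/(d+1)}\big(\sum_{j\le k}\binom{n}{j}(\beta\sigma/n)^j(1-\beta\sigma/n)^{n-j}p_j(K,y,\varrho,\sigma/n)\big)\,d\sigma$, and the binomial weights converge to the Poisson weights $e^{-\beta\sigma}(\beta\sigma)^j/j!$. For the geometric probabilities, apply $A_{(\sigma/n)^{-1}}$: by the properties recorded in the proof of Lemma~\ref{pointsneary}, $A_{(\sigma/n)^{-1}}$ maps $\widetilde C(y,\beta s)$ to a set converging (as $n\to\infty$, hence $s\to0^+$) in the Hausdorff metric to the cap of volume $\beta\varrho(y)^{-1}$ of the osculating paraboloid $\mathcal P_y$ of $K$ at $y$, maps $\tilde y_s$ to a point converging to the corresponding boundary point of the unit-content cap (using \eqref{vertical}), and is volume-preserving up to the factor $s$; since $\varrho$ is continuous and positive at $y$, the normalized density converges to the constant $\varrho(y)$ times Lebesgue measure on the paraboloid cap, i.e.\ to the uniform density. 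Hence $p_j(K,y,\varrho,\sigma/n)\to q_j(\sigma)$, where $q_j(\sigma)$ is a purely affine-invariant quantity attached to the paraboloid $\{x_d=\tfrac12\sum x_i^2\}$ (the curvature $\kappa(y)$ and the density value $\varrho(y)$ are absorbed by the affine map $A_s$, so $q_j(\sigma)$ does not depend on $K$, $y$ or $\varrho$ at all). Performing the identical reduction for the ball $B^d$ at $w$ — whose osculating paraboloid at $w$ is an ellipsoid of revolution, hence affinely a paraboloid, so after the analogous normalization one lands on the same model — yields the same Poisson weights and the same limits $q_j(\sigma)$. Thus $\lim_n \int_0^{n^{-1/2}}n^{2/(d+1)}\PP_\varrho(\tilde y_s\notin K_{(n)})s^{-(d-1)/(d+1)}\,ds$ and $\lim_n\int_0^{n^{-1/2}}n^{2/(d+1)}\PP_{\mathbf 1,B^d}(\tilde w_s\notin B^d_{(n)})s^{-(d-1)/(d+1)}\,ds$ differ by at most $C\varepsilon$; letting $\varepsilon\to0$ and citing \eqref{ball} gives the asserted value $c_d(d+1)^{(d-1)/(d+1)}\alpha_{d-1}^{2/(d+1)}$.

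To make the limit interchange rigorous I would dominate: on $[\varepsilon^{(d+1)/2}/n,\alpha/n]$ the estimate \eqref{pestimate} (with a $\beta s$ in place of $s$ it still reads $\PP_\varrho(\tilde y_s\notin[\widetilde C(y,\beta s)\cap\Xi_n])\ll 2^{d-1}e^{-2^{-d}n\beta s}$ once $n$ is large) bounds $\varphi$ by $2^{d-1}e^{-2^{-d}\beta\sigma}$ after rescaling, which is integrable against $\sigma^{-(d-1)/(d+1)}$ on $(0,\alpha)$ uniformly in $n$; hence dominated convergence applies to the $\sigma$-integral, and the finite sum over $j\le k$ is harmless. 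The main obstacle is precisely this comparison step — establishing that after the affine normalization the conditional configuration of the (at most $k$) points near $y$ converges in distribution to the corresponding configuration in the osculating paraboloid cap, uniformly enough in $\sigma\in[\varepsilon^{(d+1)/2},\alpha]$ that the geometric probabilities $p_j(K,y,\varrho,\sigma/n)$ converge to the body-independent limits $q_j(\sigma)$ and that this limit is continuous in $\sigma$. Here one uses crucially, as the excerpt already anticipates, that $\partial K$ near the normal boundary point $y$ is squeezed between slightly scaled osculating paraboloids (cf.\ \eqref{section}, \eqref{asymparea}), so that $A_{s^{-1}}(\widetilde C(y,\beta s))$ is sandwiched between fixed neighborhoods of the paraboloid cap; the event ``$\tilde y_s\notin$ convex hull'' is, up to the negligible boundary of the paraboloid cap, a continuity set for the limiting point process, which makes the convergence of probabilities go through. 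Everything else — the Efron-type bookkeeping of the binomial/Poisson weights, the substitution $s=\sigma/n$, and the final $\varepsilon\to0$ — is routine.
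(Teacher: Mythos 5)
Your proposal is correct and follows essentially the same route as the paper's proof: the reduction via Lemma~\ref{pointsneary}, the binomial conditioning on the at most $k$ points falling in $\widetilde{C}(y,\beta s)$ (the paper's \eqref{finapprox}), the affine renormalization showing that the conditional geometric probabilities converge to body-independent probabilities for a reference paraboloid cap (the paper's \eqref{limit2}), and the comparison with the ball via \eqref{ball}. The only difference is cosmetic: instead of substituting $s=\sigma/n$ and passing to Poisson weights with a dominated convergence step, the paper simply observes that after replacing the conditional probabilities by their limits the resulting expression is literally identical for $K$ and for $B^d$, so the two integrals already agree up to $O\left(\varepsilon n^{-2/(d+1)}\right)$ and one lets $\varepsilon\to 0$.
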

{\it Proof: } Let $\varepsilon\in(0,1)$ be arbitrarily chosen. 
According to Lemma~\ref{pointsneary} and its notation and by the preceding remark, 
if $n$ is sufficiently large, we have
\begin{eqnarray}
\int_0^{n^{-1/2}}
\PP_\varrho\left(\tilde{y}_s\not\in K_{(n)} \right)s^{-\frac{d-1}{d+1}}\,ds&=&
O\left(\frac{\varepsilon}{n^{\frac2{d+1}}}\right)+\sum_{i=0}^k
\binom{n }{ i}\int_{\frac{\varepsilon^{(d+1)/2}}n}^{\frac{\alpha}n}
(\beta s)^i(1-\beta s)^{n-i}\nonumber\\
&&\qquad \times\,
\PP_{\varrho,\widetilde{C}(y,\beta s)}
\left(\tilde{y}_s\not\in \widetilde{C}(y,\beta s)_{(i)} \right)
s^{-\frac{d-1}{d+1}}\,ds.\label{finapprox}
\end{eqnarray}

We fix a unit vector $p$, 
and consider the reference paraboloid $\Psi$ which is the graph
of $z\mapsto \|z\|^2$ on $p^\bot$. For $\tau>0$, define
$$
C(\tau):=\left\{z+tp:z\in p^{\bot}\mbox{ \ and \ }
\|z\|^2\leq t\leq \tau^{\frac{2}{d+1}}\right\},
$$
that is a cap of $\Psi$  of height $\tau^{\frac{2}{d+1}}$. It is easy to check that 
$V(C(\tau))=\tau V(C(1))$. We define 
$$
\tilde{s}(\beta,s):=\frac{V(\widetilde{C}(y,\beta s))}{V(C(\beta))}.
$$
Then \eqref{defst} implies that
$$
\tilde{s}(\beta,s)=\frac{\beta s}{\mu(\beta,s)\varrho(y)\beta V(C(1))}=
\frac{s}{\mu(\beta,s)\varrho(y) V(C(1))},
$$
where $\mu(\beta,s)\to 1$ as $s\to 0^+$. Let $A_s$, $s>0$, denote the affinity of $\R^d$ with $A_s(y)=y$ 
for which the associated linear map $\tilde{A}_s$ satisfies $\tilde{A}_s(v)=
s^{\frac{1}{d+1}}v$ for $v\in u^\bot$ and $\tilde{A}_s(u)=s^{\frac{2}{d+1}}u$. Then the image 
under $A_{s^{-1}}$ of 
a cap von $K$ at $y$ converges in the Hausdorff metric as $s\to 0^+$ to a 
cap of the osculating paraboloid of $K$ at $y$. For a more explicit statement, let $A$ be a 
volume preserving affinity of $\R^d$ such that $A(y)=o$ and $A(y-u)=p$, which maps the osculating paraboloid 
of $K$ at $y$ to $\Psi$. Then $\Phi_{s,\beta}:=A\circ A_{\tilde{s}(\beta,s)^{-1}}$ is an affinity satisfying 
$$
\Phi_{s,\beta}(y)=o,\quad \det(\Phi_{s,\beta})=\tilde{s}(\beta,s)^{-1}=
\frac{V(C(\beta))}{V(\widetilde{C}(y,\beta s))},
$$
and, consequently, $\Phi_{s,\beta}(\widetilde{C}(y,\beta s))\to C(\beta)$ in the Hausdorff metric 
as $s\to 0^+$. Moreover, we have
$$
\lim_{s\to 0^+}\Phi_{s,\beta}(\tilde{y}_s)=\lim_{s\to 0^+}\Phi_{s,1}(\tilde{y}_s)=p,
$$
since $\mu(\beta,s)\to 1$ and $\mu(1,s)\to 1$ as $s\to 0^+$, $\tilde{y}_s\in \partial \widetilde{C}(y,s)$ 
and $\Phi_{s,1}(\tilde{y}_s)\in\partial C(1)$, and by \eqref{vertical}. 
Since $\varrho$ is continuous at $y$, the properties of $\Phi_{s,\beta}$ imply that, 
for $i=0,\ldots,k$,
\begin{equation}\label{limit2}
\lim_{s\to 0^+}\PP_{\varrho,\widetilde{C}(y,\beta s)}
\left(\tilde{y}_s\not\in \widetilde{C}(y,\beta s)_{(i)} \right)=
\PP_{{\bf 1},C(\beta)}\left(p\not\in C(\beta)_{(i)} \right).
\end{equation}
We conclude from \eqref{finapprox} and \eqref{limit2}  that
\begin{eqnarray*}
\int_0^{n^{-1/2}}
\PP_\varrho\left(\tilde{y}_s\not\in K_{(n)} \right)s^{-\frac{d-1}{d+1}}\,ds&=&
O\left(\frac{\varepsilon}{n^{\frac2{d+1}}}\right)+\sum_{i=0}^k
\binom{n}{ i}\int_{\frac{\varepsilon^{(d+1)/2}}n}^{\frac{\alpha}n}(\beta s)^i(1-\beta s)^{n-i}\\
&&\qquad \times\,
\PP_{{\bf 1},C(\beta)}\left(p\not\in C(\beta)_{(i)} \right)s^{-\frac{d-1}{d+1}}\,ds.
\end{eqnarray*}
The same formula is obtained for 
$$\int_0^{n^{-1/2}}
\PP_{{\bf 1},B^d}\left(\tilde{w}_s\not\in B^d_{(n)} \right)s^{-\frac{d-1}{d+1}}\,ds,
$$
since $C(\beta)$ is independent of $K$. 
Since $\varepsilon\in(0,1)$ was arbitrary, we conclude
$$
\lim_{n\to\infty}\int_0^{n^{-1/2}}n^{\frac2{d+1}}
\PP_\varrho\left(\tilde{y}_s\not\in K_{(n)} \right)s^{-\frac{d-1}{d+1}}\,ds=
\lim_{n\to\infty}\int_0^{n^{-1/2}}n^{\frac2{d+1}}
\PP_{{\bf 1},B^d}\left(\tilde{w}_s\not\in B^d_{(n)} \right)s^{-\frac{d-1}{d+1}}\,ds.
$$
Now \eqref{ball} yields Lemma~\ref{compareball}.
\proofbox

\noindent{\it Proof of Theorem~\ref{weighted}: } Let  $y\in\partial K$ be a 
normal boundary point of $K$.  
Combining Lemma~\ref{zerocurv}, Lemma~\ref{compareball} 
and (\ref{limitform0}), we obtain 
$$
J_\varrho(y)=c_d\,\varrho(y)^{\frac{-2}{d+1}} \kappa(y)^{\frac1{d+1}}.
$$
Therefore Theorem~\ref{weighted} is implied by (\ref{limitform}).
\proofbox

\section{Polarity and the proof of Theorem~\ref{mainmean}}
\label{secpolar}

In this section, we deduce Theorem \ref{mainmean}  and Theorem \ref{extfacets}  
from Theorem \ref{weighted} and Corollary \ref{weightedcor}, respectively. In 
order to obtain more general results, for not necessarily 
homogeneous or isotropic hyperplane distributions, we 
start with a description of the basic setting.

Let $K\subset\R^d$ be a convex body with $o\in{\rm int}(K)$, 
as usual let $K^*:=\{z\in\R^d:\langle x,z\rangle\le 1\text{ for all }x\in K\}$ 
denote the polar body of $K$, and put $K_1:=K+B^d$. 
Let $\mathcal{H}_K$ denote the set of all hyperplanes $H$ in $\R^d$ for which 
$H\cap \text{int}(K)=\emptyset$ and $H\cap K_1\neq\emptyset$. The 
motion invariant locally finite measure $\mu$ on the space $A(d,d-1)$ of hyperplanes,  
which satisfies $\mu(\mathcal{H}_K)=2$, is explicitly given by 
$$
\mu=2\int_{S^{d-1}}\int_0^\infty\mathbf{1}\{H(u,t)\in\cdot\}\, dt\, \sigma(du),
$$
where $\sigma$ is the rotation invariant probability measure 
on the unit sphere $S^{d-1}$. The model of a random polytope (random polyhedral set) 
described in 
the introduction is based on random hyperplanes with distribution ${\mu}_K
:={2}^{-1}(\mu\llcorner \mathcal{H}_K)$. More generally, we now consider random 
hyperplanes with distribution 
\begin{equation}\label{Defmuq}
{\mu}_q:=\int_{S^{d-1}}\int_0^\infty\mathbf{1}\{H(u,t)\in\cdot\}q(t,u)\, dt\, \sigma(du),
\end{equation}
where  $q:[0,\infty)\times S^{d-1}\to[0,\infty)$ is a measurable function which is 
\begin{enumerate}
\item[(q1)] concentrated on $D_K:=\{(t,u)\in [0,\infty)\times S^{d-1}: h(K,u)\le t\le h(K_1,u)\}$,
\item[(q2)] positive and continuous in a neighborhood of $ \{(t,u)\in [0,\infty)\times S^{d-1}: t=h(K,u)\}$ 
with respect to $D_K$,
\item[(q3)]  and satisfies $\mq(\mathcal{H}_K)=1$.
\end{enumerate}
The intersection of $n$ halfspaces $H_i^-$ 
containing the origin $o$ and bounded by $n$ independent random hyperplanes $H_i$ 
with distribution ${\mu}_q$ is denoted by $K^{(n)}:=\bigcap_{i=1}^nH_i^-$. Probabilities 
and expectations with respect to $\mq$ are denoted by $\PP_{\mq}$ and $\E_{\mq}$, respectively. 
The special example $q\equiv \mathbf{1}_{D_K}$ ($q$ is the characteristic function of $D_K$) covers the situation discussed in the introduction.

In the following, beside the support function, we will also need the radial function  
$\rho(L,\cdot)$ of a convex body $L$ with $o\in{\rm int}(L)$. 
Let $F$ be a nonnegative measurable functional on convex polyhedral sets in $\R^d$. Using 
\eqref{Defmuq} and Fubini's theorem, we get
\begin{align*}
\E_{\mq}(F(K^{(n)}))&=\int_{A(d,d-1)^n}F\left(\bigcap_{i=1}^nH_i^-\right)
\, \mq^{\otimes n}(d(H_1,\ldots,H_n))\\
&=\int_{(S^{d-1})^n}\int_{h(K,u_1)}^{h(K_1,u_1)}\ldots \int_{h(K,u_n)}^{h(K_1,u_n)}
F\left(\bigcap_{i=1}^nH_i^-(u_i,t_i)\right)\prod_{i=1}^nq(t_i,u_i)\,\\
&\qquad\qquad \times dt_n\ldots dt_1\,
\sigma^{\otimes n}(d(u_1,\ldots,u_n)).
\end{align*}
For $t_1,\ldots,t_n>0$, we have
$$
\bigcap_{i=1}^nH_i^-(u_i,t_i)=\left[{t_1}^{-1}u_1,\ldots, {t_n}^{-1}u_n\right]^*.
$$
Using the substitution $s_i=1/t_i$, $\rho(L^*,u_i)=h(L,u_i)^{-1}$ 
for $L\in\mathcal{K}^n$ with $o\in\text{int}(L)$, and polar coordinates, we obtain 
$$
\E_{\mq}(F(K^{(n)}))=\frac{1}{\omega_d^n}\int_{\left(K^*\setminus K_1^*\right)^n}
F([x_1,\ldots,x_n]^*)\prod_{i=1}^n\left(\tilde{q}(x_i)\|x_i\|^{-(d+1)}
\right)\, d(x_1,\ldots,x_n)
$$
with $K^*_1:=(K_1)^*$ and 
$$
\tilde{q}(x):=q\left(\frac{1}{\|x\|},\frac{x}{\|x\|}\right),\qquad x\in K^*\setminus\{o\}.
$$
The case $n=1$ and $F\equiv 1$ yields
$$
\frac{1}{\omega_d}\int_{K^*\setminus K_1^*}
\tilde{q}(x)\|x\|^{-(d+1)}\, dx =1,
$$
hence 
$$
\varrho(x):=\begin{cases}
{\omega_d}^{-1}\tilde{q}(x)\|x\|^{-(d+1)},&
 x\in K^*\setminus K_1^*,\\
0,&x\in K_1^*,
\end{cases}
$$ 
is a probability density with respect to $\mathcal{H}^d\llcorner K^*$ 
which is positive and continuous in a neighborhood of $\partial K^*$ with respect to $K^*$. 
Thus we conclude that
\begin{align*}
\E_{\mq}(F(K^{(n)}))&=\int_{\left(K^*\right)^n}
F([x_1,\ldots,x_n]^*)\prod_{i=1}^n \varrho(x_i)\, d(x_1,\ldots,x_n)\\
&=\E_{\varrho,K^*}\left(F((K_{(n)}^*)^*)\right),
\end{align*}
where $K^*_{(n)}:=(K^*)_{(n)}$. 

\medskip

\begin{prop}\label{equaldistr}
Let $K\subset\R^d$ be a convex body with $o\in{\rm int}(K)$, and let $q$ and $\varrho$ be 
defined as above. Then the random polyhedral sets $K^{(n)}$ and $(K^*_{(n)})^*$ are equal 
in distribution.
\end{prop}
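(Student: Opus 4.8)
The plan is to obtain the assertion directly from the integral identity established immediately before the statement. Indeed, the chain of substitutions carried out above (the change of variables $s_i=1/t_i$, the passage to polar coordinates, and the use of $\rho(L^*,u)=h(L,u)^{-1}$) shows that, for every nonnegative measurable functional $F$ on the space $\mathcal{P}$ of convex polyhedral sets in $\R^d$,
$$
\E_{\mq}(F(K^{(n)}))=\E_{\varrho,K^*}\left(F((K^*_{(n)})^*)\right).
$$
First I would fix on $\mathcal{P}$ (or, more comfortably, on the larger space of closed convex subsets of $\R^d$) the Borel $\sigma$-algebra induced by the Fell topology, and record that $K^{(n)}=\bigcap_{i=1}^nH_i^-$ and $(K^*_{(n)})^*$ are genuine random elements of this space: the maps $(u,t)\mapsto H^-(u,t)$, finite intersections of halfspaces, the convex-hull map $(x_1,\ldots,x_n)\mapsto[x_1,\ldots,x_n]$, and the polarity map $L\mapsto L^*$ are all Borel measurable with respect to this structure. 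Note that $(K^*_{(n)})^*$ may be unbounded, exactly as $K^{(n)}$ may be, but polarity remains a well-defined measurable operation on closed convex sets, so nothing is lost here.

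The core step is then to specialize the displayed identity. Applying it with $F=\mathbf{1}_{\mathcal{A}}$, where $\mathcal{A}$ ranges over the Borel subsets of $\mathcal{P}$, yields
$$
\PP_{\mq}(K^{(n)}\in\mathcal{A})=\PP_{\varrho,K^*}\left((K^*_{(n)})^*\in\mathcal{A}\right)
$$
for every such $\mathcal{A}$, which is precisely the statement that $K^{(n)}$ and $(K^*_{(n)})^*$ have the same distribution. Equivalently, one may restrict $F$ to bounded continuous functionals and invoke the fact that the space of closed subsets of $\R^d$ with the Fell topology is compact metrizable, so that agreement of all such integrals forces the two push-forward measures to coincide.

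The only point that requires genuine care — and hence the main obstacle — is the measurability bookkeeping: one must pin down once and for all a Polish (or at least suitably nice measurable) structure on the space where $K^{(n)}$ lives, verify that each geometric operation occurring in the substitution chain is Borel measurable in that structure, and confirm that the identity above, which is transparent for continuous $F$, extends to all nonnegative measurable $F$ by a routine monotone class argument. Once this is in place, the proposition follows at once, and in fact it is exactly the tool that will let us transfer Theorem~\ref{weighted} and Corollary~\ref{weightedcor} to statements about $K^{(n)}$.
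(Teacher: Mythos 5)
Your argument is exactly the paper's: the identity $\E_{\mq}(F(K^{(n)}))=\E_{\varrho,K^*}(F((K^*_{(n)})^*))$ obtained from the substitution $s_i=1/t_i$, polar coordinates and $\rho(L^*,u)=h(L,u)^{-1}$, applied to indicator functionals $F$, is precisely how the proposition is established there. Your additional remarks on the Fell-topology Borel structure and measurability are sound housekeeping but add nothing beyond what the paper's computation for arbitrary nonnegative measurable $F$ already delivers.
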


\medskip

For a first application, let
$$
F(P):=\mathbf{1}\{P\subset K_1\}\left(W(P)-W(K)\right),
$$
for a polyhedral set $P\subset\R^d$, with the convention $0\cdot \infty:=0$. 
For $x_1,\ldots,x_n\in K^*\setminus 
K_1^*$, we have $K\subset [x_1,\ldots,x_n]^*$ and, arguing as before,  
\begin{align*}
F([x_1,\ldots,x_n]^*)&=\mathbf{1}\{[x_1,\ldots,x_n]^*\subset K_1\}
\left(W([x_1,\ldots,x_n]^*)-W(K)\right)\\
&=2\cdot \mathbf{1}\{[x_1,\ldots,x_n]^*\subset K_1\}\int_{K^*\setminus[x_1,\ldots,x_n]}\lambda(x)\, dx ,
\end{align*}
where 
$$
\lambda(x):=\begin{cases}
{\omega_d}^{-1}{\|x\|^{-(d+1)}},& x\in K^*\setminus K^*_1,\\
0,&x\in K^*_1.
\end{cases}
$$
Note that if $[x_1,\ldots,x_n]^*\subset K_1$, then 
the set $[x_1,\ldots,x_n]^*$ is bounded, hence $o\in\text{int}([x_1,\ldots,x_n])$, and 
therefore $K_1^*\subset [x_1,\ldots,x_n]^{**}=[x_1,\ldots,x_n]$.

As in \cite{BS08}, it can be shown that $\PP_{\mq}(K^{(n)}\not\subset K_1)\ll \alpha^n$,  
for some $\alpha\in (0,1)$ depending on $K$ and $q$. By Proposition \ref{equaldistr}, we also get
$$
\PP_{\varrho,K^*}\left((K^*_{(n)})^*\not\subset K_1\right)=
\PP_{\mq}\left(K^{(n)}\not\subset K_1\right)\ll\alpha^n.
$$
Hence
\begin{align*}
&\E_{\mq}\left(W(K^{(n)}\cap K_1)-W(K)\right)\\
&\qquad= \E_{\mq}\left(\mathbf{1}\{K^{(n)}\subset K_1\}
\left(W(K^{(n)})-W(K)\right)\right)+O(\alpha^n)\\
&\qquad=2\cdot \E_{\varrho,K^*}\left(\mathbf{1}\{(K^*_{(n)})^*\subset K_1\}
\int_{K^*\setminus K^*_{(n)}}\lambda(x)\, dx\right)
+O(\alpha^n)\\
&\qquad=2\cdot \E_{\varrho,K^*}\left(
\int_{K^*\setminus K^*_{(n)}}\lambda(x)\, dx\right)+O(\alpha^n),
\end{align*}
where we used that $\lambda$ is integrable. 
Therefore, Theorem \ref{weighted} implies
\begin{eqnarray*}
&&\lim_{n\to\infty}n^{\frac2{d+1}}\, \E_{\mq}(W(K^{(n)}\cap K_1)-W(K))\\&&\qquad=
2\cdot \lim_{n\to\infty}
n^{\frac2{d+1}}\, 
\E_{\varrho,K^*}\int_{K^*\setminus K^*_{(n)}}\lambda(x)\,dx\\
&&\qquad=  {2\,{c}_d}
\int_{\partial K^*}\varrho(x)^{-\frac{2}{d+1}}\lambda(x)
\kappa^{*}(x)^{\frac1{d+1}}\,\Hde(dx)\\
&&\qquad=  {2\,{c}_d}\,{\omega_d}^{-\frac{d-1}{d+1}}\, 
\int_{\partial K^*}\tilde{q}(x)^{-\frac{2}{d+1}}\|x\|^{-d+1}\kappa^{*}(x)^{\frac1{d+1}}\,\Hde(dx),
\end{eqnarray*}
where $\kappa^*$ denotes the generalized Gauss curvature of $K^*$. In the following, 
for $x\in \partial K$, 
let $\sigma_K(x)$ denote an exterior unit normal vector of $K$ at $x$. It is unique 
for $\mathcal{H}^{d-1}$ almost all $x\in\partial K$.

\begin{theo}\label{gener1}
Let  $K\subset\R^d$ be a convex body with $o\in{\rm int}(K)$, and let 
$q:[0,\infty)\times S^{d-1}\to[0,\infty)$ be a measurable function 
satisfying (q1)--(q3). Then 
\begin{align}\label{rhs}
&\lim_{n\to\infty}n^{\frac2{d+1}}\, \E_{\mq}(W(K^{(n)}\cap K_1)-W(K))\nonumber\\
&\qquad=
{2\,{c}_d}\,{\omega_d}^{-\frac{d-1}{d+1}}\, 
\int_{\partial K}q(h(K,\sigma_K(x)),\sigma_K(x))^{-\frac{2}{d+1}}
\kappa(x)^{\frac{d}{d+1}}\,\Hde(dx).
\end{align}
\end{theo}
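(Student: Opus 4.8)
The analytic heart of the matter is already contained in the chain of equalities preceding the statement: it reduces the limit on the left-hand side of \eqref{rhs} to
\[
2c_d\,\omega_d^{-\frac{d-1}{d+1}}\int_{\partial K^*}\tilde q(z)^{-\frac{2}{d+1}}\,\|z\|^{-d+1}\,\kappa^*(z)^{\frac1{d+1}}\,\Hde(dz).
\]
Hence the only task left is to show that this integral over $\partial K^*$ equals the integral over $\partial K$ on the right-hand side of \eqref{rhs}; this is a pure change-of-variables identity under polarity, and the plan is to verify it by passing through a radial parametrization over $S^{d-1}$.

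First I would recall the boundary correspondence induced by polarity. For $\Hde$-almost every normal boundary point $x\in\partial K$ with $\kappa(x)>0$, writing $u:=\sigma_K(x)$ so that $\langle x,u\rangle=h(K,u)$, the point $x^*:=u/\langle x,u\rangle$ lies on $\partial K^*$, is a normal boundary point of $K^*$ with $\kappa^*(x^*)>0$, and has exterior unit normal $x/\|x\|$; moreover $x\mapsto x^*$ is a bijection, modulo $\Hde$-null sets, between the normal boundary points of $K$ with positive curvature and the corresponding set for $K^*$, and these are the only parts of the two boundaries on which the respective integrands do not vanish. Since $\|u\|=1$ we have $\|x^*\|=1/\langle x,u\rangle$, and from the definition of $\tilde q$ (with $1/\|x^*\|=\langle x,u\rangle=h(K,u)$ and $x^*/\|x^*\|=u$) we get $\tilde q(x^*)=q(h(K,\sigma_K(x)),\sigma_K(x))$, so the density factors in the two integrals already match. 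It then remains to identify $\|x^*\|^{-d+1}\kappa^*(x^*)^{1/(d+1)}\,\Hde(dx^*)$ with $\kappa(x)^{d/(d+1)}\,\Hde(dx)$. To this end I would parametrize $\partial K^*$ radially by $u\mapsto z(u)=\rho_{K^*}(u)\,u=u/h(K,u)$ over $S^{d-1}$, whose Jacobian equals $\rho_{K^*}(u)^{d-1}/\langle u,x/\|x\|\rangle$ with $x=x_K(u)$ the point of $\partial K$ with outer normal $u$, then combine this with the fact that the Gauss map $x\mapsto\sigma_K(x)$ of $\partial K$ has Jacobian $\kappa(x)$ and with the classical polarity relation for the generalized Gauss curvature, $\kappa^*(x^*)=\langle x,u\rangle^{d+1}\,\|x\|^{-(d+1)}\,\kappa(x)^{-1}$. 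Substituting everything into the $\partial K^*$-integral, the powers of $\|x\|$ and of $\langle x,u\rangle=h(K,u)$ cancel completely and the power of $\kappa(x)$ becomes $1-\tfrac1{d+1}=\tfrac{d}{d+1}$, so the integral collapses to $\int_{\partial K}q(h(K,\sigma_K(x)),\sigma_K(x))^{-2/(d+1)}\,\kappa(x)^{d/(d+1)}\,\Hde(dx)$, as claimed; the special case $q\equiv\mathbf{1}_{D_K}$ then recovers Theorem \ref{mainmean}.

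The algebra above is short precisely because the exponents in Theorem \ref{weighted} and in the constant $c_d$ are tuned so that these polarity factors disappear, so I expect the main obstacle to lie in justifying the facts quoted in the previous paragraph for a \emph{general} convex body, where $\kappa$ and $\kappa^*$ denote only the generalized (Alexandrov) Gauss curvatures, defined $\Hde$-almost everywhere. One must check that the polar map carries the full-measure set of normal boundary points of $K$ with $\kappa>0$ onto the corresponding set for $K^*$ up to $\Hde$-null sets --- so that the ``bad'' points (non-normal boundary points, points with $\kappa=0$, and their polar counterparts, which are typically non-smooth points of $K^*$) either map to null sets or do not contribute --- and that the curvature- and surface-measure-transformation formulas are valid in this generalized, almost-everywhere sense. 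These are by now standard facts in the theory of the affine surface area of general convex bodies and their polars (cf.\ the work of Hug), and granting them, the substitution above completes the proof.
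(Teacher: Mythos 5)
Your reduction to the integral identity over $\partial K^*$ is exactly how the paper proceeds (the chain of equalities before the theorem plus Proposition \ref{equaldistr} and Theorem \ref{weighted}), so the substance of your proposal is the change-of-variables step, and there your route differs from the paper's in an interesting way. The paper proves the identity as Lemma \ref{trafo2}: it first establishes Lemma \ref{trafo1} by working on the generalized normal bundle $\mathcal{N}(K^*)$ with the generalized curvatures $k_i(x,u)\in[0,\infty]$ and the coarea formula for the two projections (this is where the degenerate cases $k_i=0,\infty$ are disposed of), then invokes Theorem 2.2 of \cite{Hug96b} to convert $D_{d-1}h_{K^*}(u)$ into $\kappa(\rho(K,u)u)\langle u,\sigma_K(\rho(K,u)u)\rangle^{-(d+1)}$, and finally uses the bilipschitz radial map $S^{d-1}\to\partial K$. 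You instead go $\partial K^*\to S^{d-1}$ by the radial map of $K^*$ and $S^{d-1}\to\partial K$ by the inverse Gauss map of $K$, using the pointwise relation $\kappa(x)\kappa^*(x^*)=\bigl(\langle x,\sigma_K(x)\rangle/\|x\|\bigr)^{d+1}$; your algebra is right and this relation is correct (it is equivalent to the identity from \cite{Hug96b} that the paper uses). The one step you should not leave at the level of a slogan is ``the Gauss map of $\partial K$ has Jacobian $\kappa$'': as an unqualified change of variables between $\mathcal{H}^{d-1}\llcorner S^{d-1}$ and $\kappa\,\mathcal{H}^{d-1}\llcorner\partial K$ this is false for general bodies (for a polytope the right side vanishes identically while the left does not), and the proof must argue that the offending directions/points, i.e.\ normal cones of singular boundary points of $K$ and flat parts of $\partial K$, contribute nothing because the factor $\kappa^*(z(u))^{1/(d+1)}$, respectively $\kappa(x)^{d/(d+1)}$, vanishes there almost everywhere. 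That bookkeeping is precisely what the paper's Lemma \ref{trafo1} accomplishes via the normal bundle, and what your appeal to ``standard facts (cf.\ the work of Hug)'' must be expanded into; the facts are indeed available (they underlie $\Omega_{d^2}(K)=\Omega_1(K^*)$ in \cite{Hug96b} and the statements in \cite{Hug98}, \cite{Hug2002}), so your argument is sound, but as written it cites the crux that the paper actually proves. What your route buys is a more transparent geometric picture (a direct a.e.\ bijection of the two boundaries with all polarity factors cancelling); what the paper's route buys is a self-contained and cleanly quantified treatment of the exceptional sets.
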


The proof is completed in Section \ref{Polinttrafo} by providing Lemma 
\ref{trafo2}. 

Observe that if $q:\{(h(K,u),u)\in (0,\infty)\times S^{d-1}:u\in S^{d-1}\} 
\to [0,\infty)$ is positive and continuous, then $q$ can be extended to $[0,\infty)\times S^{d-1}$ 
such that (q1)--(q3) are satisfied. For any such extension, the right-hand side of \eqref{rhs} 
remains unchanged. As an example, we may choose $q_1$ such that $q_1(t,u)=t^{(d^2-1)/2}$ for $t=h(K,u)$ and 
$u\in S^{d-1}$. Then the integral in \eqref{rhs} turns into
$$
\int_{\partial K}\frac{\kappa(x)^{\frac{d}{d+1}}}{\langle x,
\sigma_K(x)\rangle^{d-1}}\, \Hde(dx)=\Omega_{d^2}(K),
$$
where
$$
\Omega_p(K):=\int_{\partial K}\frac{\kappa(x)^{\frac{p}{d+p}}}
{\langle x,\sigma_K(x)\rangle^{\frac{(p-1)d}{d+p}}}\, \Hde(dx)
$$
is the $p$-affine surface area of $K$ (see \cite{Lutwak}, \cite{Hug96a}, 
\cite{Hug96b}, \cite{Leichtweiss}, \cite{Werner1}, \cite{Werner2}, \cite{Ludwig0}, 
\cite{Ludwig}). It has been shown that 
$\Omega_{d^2}(K)=\Omega_1(K^*)$; see \cite{Hug96b}. Moreover, for a convex body $L\subset\R^d$, the 
equiaffine isoperimetric inequality states that
$$
\Omega_1(L)\le d\alpha_d^{\frac{2}{d+1}}V(L)^{\frac{d-1}{d+1}}
$$
with equality if and only if $L$ is an ellipsoid (cf.\ \cite{Petty}, \cite{Lutwak0}, \cite{Lutwak}, \cite{Hug96a}, \cite{Boe}). Thus we get 
$$
\lim_{n\to\infty}n^{\frac2{d+1}}\, \E_{\mu_{q_1}}(W(K^{(n)}\cap K_1)-W(K))\le 2dc_d\omega_d^{-\frac{d-1}{d+1}}
\alpha_d^{\frac{2}{d+1}}V(K^*)^{\frac{d-1}{d+1}}
$$
with equality if and only if $K^*$ is an ellipsoid, that is, if and only if $K$ is an ellipsoid.

\bigskip

For another application, we define
$$
F(P):= f_{d-1}(P),
$$
for a convex polyhedral set $P\subset\R^d$. It is well known that 
$f_0(P)=f_{d-1}(P^*)$ for a convex polytope $P\subset\R^d$ with $o\in\text{int}(P)$. 
Thus, from Proposition \ref{equaldistr} we get
\begin{align*}
\E_{\mu_q}\left(f_{d-1}(K^{(n)})\right)
&=\E_{\varrho,K^*}\left(f_{d-1}((K^*_{(n)})^*)\right)\\
&=\E_{\varrho,K^*}\left(\mathbf{1}\{(K^*_{(n)})^*\subset K_1\}
f_{d-1}((K^*_{(n)})^*)\right)\\&\qquad\qquad+\E_{\varrho,K^*}\left(\mathbf{1}
\{(K^*_{(n)})^*\not\subset K_1\}f_{d-1}((K^*_{(n)})^*)\right)\\
&=\E_{\varrho,K^*}\left(\mathbf{1}\{(K^*_{(n)})^*\subset K_1\}
f_{0}(K^*_{(n)})\right)+O(n\cdot \alpha^n)\\
&=\E_{\varrho,K^*}\left(
f_{0}(K^*_{(n)})\right)+O(n\cdot \alpha^n),
\end{align*}
where $\alpha\in(0,1)$ is a suitable constant. 

The following Theorem \ref{gener2} generalizes Theorem \ref{mainmean} in the same way as
Theorem \ref{gener1} extends Theorem \ref{extfacets}.

\begin{theo}\label{gener2}
Let  $K\subset\R^d$ be a convex body with $o\in{\rm int}(K)$, and let 
$q:[0,\infty)\times S^{d-1}\to[0,\infty)$ be a measurable function 
satisfying (q1)--(q3). Then 
$$
\lim_{n\to\infty}n^{-\frac{d-1}{d+1}}\, \E_{\mq}(f_{d-1}(K^{(n)}))
={{c}_d}\,{\omega_d}^{-\frac{d-1}{d+1}}\, 
\int_{\partial K}q(h(K,\sigma_K(x)),\sigma_K(x))^{\frac{d-1}{d+1}}
\kappa(x)^{\frac{d}{d+1}}\,\Hde(dx).
$$
\end{theo}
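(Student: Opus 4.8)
The plan is to derive Theorem~\ref{gener2} from Corollary~\ref{weightedcor} by exactly the same polarity mechanism that was used to prove Theorem~\ref{gener1}. The groundwork has already been laid in the text preceding the statement: using Proposition~\ref{equaldistr} together with the identity $f_0(P)=f_{d-1}(P^*)$ for a polytope $P$ with $o\in\mathrm{int}(P)$, and the exponential bound $\PP_{\mq}(K^{(n)}\not\subset K_1)\ll\alpha^n$ together with the crude deterministic estimate $f_{d-1}((K^*_{(n)})^*)\le\binom{n}{d-1}\ll n^{d-1}$ on the exceptional event, we already have
$$
\E_{\mq}\left(f_{d-1}(K^{(n)})\right)=\E_{\varrho,K^*}\left(f_0(K^*_{(n)})\right)+O(n\,\alpha^n).
$$
So the first step is simply to invoke Corollary~\ref{weightedcor} applied to the convex body $K^*$ with the probability density $\varrho(x)={\omega_d}^{-1}\tilde q(x)\|x\|^{-(d+1)}$ on $K^*\setminus K_1^*$ (which, as noted, is continuous and positive near $\partial K^*$), giving
$$
\lim_{n\to\infty}n^{-\frac{d-1}{d+1}}\,\E_{\mq}(f_{d-1}(K^{(n)}))
={c}_d\int_{\partial K^*}\varrho(x)^{\frac{d-1}{d+1}}\kappa^*(x)^{\frac1{d+1}}\,\Hde(dx).
$$
Since the error term $O(n\,\alpha^n)$ is negligible after multiplication by $n^{-\frac{d-1}{d+1}}$, this identity is immediate.

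The second step is to substitute the explicit form of $\varrho$ and simplify. Writing $\varrho(x)^{\frac{d-1}{d+1}}={\omega_d}^{-\frac{d-1}{d+1}}\tilde q(x)^{\frac{d-1}{d+1}}\|x\|^{-(d+1)\cdot\frac{d-1}{d+1}}={\omega_d}^{-\frac{d-1}{d+1}}\tilde q(x)^{\frac{d-1}{d+1}}\|x\|^{-(d-1)}$, we arrive at
$$
\lim_{n\to\infty}n^{-\frac{d-1}{d+1}}\,\E_{\mq}(f_{d-1}(K^{(n)}))
={c}_d\,{\omega_d}^{-\frac{d-1}{d+1}}\int_{\partial K^*}\tilde q(x)^{\frac{d-1}{d+1}}\|x\|^{-(d-1)}\kappa^*(x)^{\frac1{d+1}}\,\Hde(dx).
$$
Note that this is structurally the very same boundary integral over $\partial K^*$ that appeared in the derivation of Theorem~\ref{gener1}, except that the exponent of $\tilde q(x)$ and of $\|x\|$ have changed from $-\frac{2}{d+1}$, $-d+1$ to $+\frac{d-1}{d+1}$, $-(d-1)$.

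The third and final step is to transform this integral over $\partial K^*$ into the claimed integral over $\partial K$. This is precisely what Lemma~\ref{trafo2} (announced in the text as ``The proof is completed in Section~\ref{Polinttrafo} by providing Lemma~\ref{trafo2}'') is designed to do: it is a polarity change-of-variables formula relating $\Hde$-integrals over $\partial K^*$ involving $\kappa^*$, $\|x\|$ and a density, to $\Hde$-integrals over $\partial K$ involving $\kappa$, $\langle x,\sigma_K(x)\rangle$, and the pulled-back density. Under the correspondence $x\leftrightarrow \sigma_K(x)/h(K,\sigma_K(x))$ between $\partial K$ and $\partial K^*$, one has $\tilde q(x)=q(h(K,\sigma_K(\bar x)),\sigma_K(\bar x))$ at the corresponding point $\bar x\in\partial K$, and the Jacobian factors together with the classical identities for $\kappa^*$ in terms of $\kappa$ and the support function produce exactly the factor $\kappa(x)^{\frac{d}{d+1}}$ and absorb all powers of $\|x\|$ and $\langle x,\sigma_K(x)\rangle$. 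I expect the bookkeeping of exponents in this last step to be the main point requiring care — one must check that with the exponents $\frac{d-1}{d+1}$ on $q$ and $-(d-1)$ on $\|x\|$, Lemma~\ref{trafo2} yields no surviving power of $\langle x,\sigma_K(x)\rangle$, so that the right-hand side is the clean expression
$$
{c}_d\,{\omega_d}^{-\frac{d-1}{d+1}}\int_{\partial K}q(h(K,\sigma_K(x)),\sigma_K(x))^{\frac{d-1}{d+1}}\kappa(x)^{\frac{d}{d+1}}\,\Hde(dx),
$$
as stated. Once Lemma~\ref{trafo2} is available in the form already used for Theorem~\ref{gener1}, no genuinely new difficulty arises; the proof is a one-line application of the corollary followed by the same integral transform.
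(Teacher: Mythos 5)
Your proposal is correct and follows exactly the paper's route: reduce $\E_{\mq}(f_{d-1}(K^{(n)}))$ to $\E_{\varrho,K^*}(f_0(K^*_{(n)}))$ via Proposition~\ref{equaldistr} and the polarity identity $f_0(P)=f_{d-1}(P^*)$ with an exponentially small error, apply Corollary~\ref{weightedcor} to $K^*$ with the density $\varrho$, and convert the resulting boundary integral by Lemma~\ref{trafo2}; the exponent bookkeeping indeed closes, since $\varrho(x)^{\frac{d-1}{d+1}}$ produces precisely the factor $\|x\|^{-d+1}$ that Lemma~\ref{trafo2} requires with $f=q^{\frac{d-1}{d+1}}$, leaving no surviving power of $\langle x,\sigma_K(x)\rangle$.
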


The proof follows by applying Corollary \ref{weightedcor} and  Lemma 
\ref{trafo2}. 

\section{Polarity and an integral transformation}\label{Polinttrafo}

In this section, we establish the required integral transformation involving the Gauss curvatures 
of a convex body and its polar body.

Let $L\subset \R^d$ be a convex body.  If the support function $h_L$ of $L$ 
is differentiable at $u\neq o$,  then the gradient $\nabla h_L(u)$ 
of $h_L$  at $u$ is equal to the unique boundary point of $L$ having $u$ as an exterior normal 
vector. In particular, the gradient of $h_L$ is a function which is homogeneous of degree zero. 
Note that $h_L$ is differentiable at $\Hde$ almost all unit vectors. 
We write $D_{d-1}h_L(u)$ for the product of the principal radii of curvature 
of $L$ in direction $u\in S^{d-1}$, whenever the support function $h_L$ is 
twice differentiable in the generalized sense at $u\in S^{d-1}$. Note that this 
is the case for $\mathcal{H}^{d-1}$ almost all $u\in S^{d-1}$. The Gauss map 
$\sigma_L$ is defined $\mathcal{H}^{d-1}$ almost everywhere on $\partial L$. If $\sigma_L$ is 
differentiable in the generalized sense at $x\in\partial L$, which is the case 
for $\mathcal{H}^{d-1}$ almost all $x\in\partial L$, then the product of the 
eigenvalues of the differential is the Gauss curvature $\kappa_L(x)$. The 
connection to curvatures defined on the generalized normal bundle $\NL$ of $L$ 
will be used in the following proof (cf.\ \cite{Hug98}).

\begin{lemma}\label{trafo1}
Let $L\subset\R^d$ be a convex body containing the origin in its interior. If  
$g:\partial L\to[0,\infty]$ is measurable, then
$$
\int_{\partial L}g(x)\kappa_L(x)^{\frac{1}{d+1}}\, \Hde(dx)
=\int_{S^{d-1}}g(\nabla h_L(u))
D_{d-1}h_L(u)^{\frac{d}{d+1}}\,\Hde(du).
$$
\end{lemma}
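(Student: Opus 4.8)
\textbf{Proof plan for Lemma \ref{trafo1}.}

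The plan is to use the Gauss map $\sigma_L:\partial L\to S^{d-1}$ as a change of variables, exploiting the fact that $\nabla h_L$ is (a.e.) its inverse. Concretely, I would argue as follows. By the regularity facts recalled before the statement, $\sigma_L$ is differentiable in the generalized sense $\Hde$-a.e.\ on $\partial L$, $h_L$ is twice differentiable in the generalized sense $\Hde$-a.e.\ on $S^{d-1}$, and on a full-measure subset of $\partial L$ the point $x$ and the direction $u=\sigma_L(x)$ are paired bijectively with $x=\nabla h_L(u)$. The Jacobian of $\sigma_L$ at such $x$ is $\kappa_L(x)$ (the product of the eigenvalues of $d\sigma_L$, which is the Gauss curvature), while the Jacobian of the inverse map $u\mapsto\nabla h_L(u)$ is $D_{d-1}h_L(u)$ (the product of the principal radii of curvature). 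So by Federer's area theorem applied to $\sigma_L$,
$$
\int_{\partial L}g(x)\kappa_L(x)^{\frac{1}{d+1}}\,\Hde(dx)
=\int_{\partial L}g(x)\kappa_L(x)^{\frac{1}{d+1}-1}\kappa_L(x)\,\Hde(dx)
=\int_{S^{d-1}}g(\nabla h_L(u))\kappa_L(\nabla h_L(u))^{-\frac{d}{d+1}}\,\Hde(du),
$$
and then one uses the pointwise reciprocity $\kappa_L(\nabla h_L(u))=D_{d-1}h_L(u)^{-1}$, valid $\Hde$-a.e.\ on $S^{d-1}$, to convert $\kappa_L(\nabla h_L(u))^{-\frac{d}{d+1}}$ into $D_{d-1}h_L(u)^{\frac{d}{d+1}}$. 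This yields the claimed identity.

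To make this rigorous one cannot simply differentiate $\sigma_L$ and $\nabla h_L$ and invoke the inverse function theorem, since these maps are only a.e.\ differentiable in a generalized (Alexandrov-type) sense and need not be smooth. The cleanest route, as hinted in the paragraph before the lemma, is to pass through the generalized normal bundle $\NL=\{(x,u)\in\partial L\times S^{d-1}:u\text{ is an outer normal to }L\text{ at }x\}$, which is a $(d-1)$-rectifiable subset of $\R^d\times\R^d$. On $\NL$ one has well-defined generalized curvatures, and both projections $\pi_1:\NL\to\partial L$ and $\pi_2:\NL\to S^{d-1}$ are a.e.\ bijective with computable Jacobians expressed through these curvatures; composing the coarea/area formulas for $\pi_1$ and $\pi_2$ (this is exactly the machinery of \cite{Hug98}) gives the stated transformation. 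I would cite \cite{Hug98} for the precise Jacobian formulas and the $\NL$-a.e.\ reciprocity of the principal curvatures and principal radii of curvature, and then just assemble the exponents: the factor $\kappa_L^{\frac{1}{d+1}}$ on the left becomes $\kappa_L^{\frac{1}{d+1}}$ times the Jacobian factor relating $\Hde$ on $\partial L$ to $\Hde$ on $S^{d-1}$, and a short bookkeeping check shows the total exponent on $D_{d-1}h_L$ is $\frac{d}{d+1}$.

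The main obstacle is precisely this measure-theoretic care: ensuring that the sets where $\sigma_L$ or $h_L$ fail to be (generalized) twice differentiable, where $\kappa_L=0$, or where $D_{d-1}h_L=\infty$, are negligible under the relevant Hausdorff measures and that the change of variables is justified on their complements. The case $\kappa_L(x)=0$ needs a remark: there $x=\nabla h_L(u)$ corresponds to $D_{d-1}h_L(u)=\infty$; but the set of such $u$ has $\Hde$-measure zero (equivalently, the image under $\sigma_L$ of $\{\kappa_L=0\}$ is $\Hde$-null), so it does not contribute to either side, and both integrals are over the same full-measure set after removing it. Once these null-set issues are dispatched via the $\NL$-formalism, the identity is a one-line consequence of the area theorem plus the reciprocity of curvatures.
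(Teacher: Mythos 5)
Your plan coincides with the paper's actual proof: the paper also evaluates a single integral over the generalized normal bundle $\NL$ in two ways, via the projections $\pi_1$ and $\pi_2$ with Jacobians expressed through the generalized curvatures $k_i(x,u)$ (using Lemmas 3.1 and 3.4 of \cite{Hug98} and the conventions for $k_i=0$ or $k_i=\infty$), and compares the two resulting expressions. Your opening sketch via the area formula for $\sigma_L$ and a pointwise reciprocity $\kappa_L(\nabla h_L(u))=D_{d-1}h_L(u)^{-1}$ would indeed not survive the degenerate sets (this is exactly what the $\NL$-formalism is for), but you recognize this and your rigorous route is essentially the paper's argument.
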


\begin{proof} 
In the following proof, we use results and methods from \cite{Hug98}, to which 
we refer for additional references and detailed definitions. Let $\NL$ denote the 
generalized normal bundle of $L$, and let $k_i(x,u)\in[0,\infty]$, $i=1,\ldots,d-1$, be 
the generalized curvatures of $L$, which are defined for $\mathcal{H}^{d-1}$ almost 
all $(x,u)\in \NL$. Expressions such as
$$
\frac{k_i(x,u)^{\frac{1}{d+1}}}{\sqrt{1+k_i(x,u)^2}}\qquad\text{or}\qquad 
\frac{k_i(x,u)}{\sqrt{1+k_i(x,u)^2}}
$$
with $k_i(x,u)=\infty$ are understood as limits as $k_i(x,u)\to\infty$, and yield $0$ or 
$1$, respectively in the two given examples. As is common in measure theory, the product 
$0\cdot\infty$ is defined as $0$. 

Our starting point is the expression
\begin{equation}\label{starter}
I:=\int_{\NL}g(x)\prod_{i=1}^{d-1}
\frac{k_i(x,u)^{\frac{1}{d+1}}}{\sqrt{1+k_i(x,u)^2}}\, \mathcal{H}^{d-1}(d(x,u)),
\end{equation}
which will be evaluated in two different ways. A comparison of the resulting 
expressions yields the assertion of the lemma. 

First, we rewrite $I$ in the form
\begin{equation}\label{I1}
I=\int_{\NL}g(x)
\left(\prod_{i=1}^{d-1}k_i(x,u)\right)^{-\frac{d}{d+1}}J_{d-1}\pi_2(x,u)\, 
\mathcal{H}^{d-1}(d(x,u)),
\end{equation}
where
$$
J_{d-1}\pi_2(x,u)=\prod_{i=1}^{d-1}\frac{k_i(x,u)}{\sqrt{1+k_i(x,u)^2}},
$$
for $\mathcal{H}^{d-1}$ almost all $(x,u)\in\NL$, 
is the (approximate) Jacobian of the map $\pi_2:\NL\to S^{d-1}$, $(x,u)\mapsto u$. To check \eqref{I1}, 
we distinguish the following cases. If $k_i(x,u)=0$ for some $i$, then the integrands on the 
right-hand sides of \eqref{starter} and   of \eqref{I1} are zero, since 
$0\cdot\infty=0$ and  $J_{d-1}\pi_2(x,u)=0$. If $k_i(x,u)\neq 0$ for all $i$ and $k_j(x,u)=\infty$ for some 
$j$, then again both integrands are zero. In all other cases the assertion is clear. 

For $\mathcal{H}^{d-1}$ almost all $u\in S^{d-1}$,  $\nabla h_L(u)\in \partial L$ is the unique boundary 
point of $L$ which has $u$ as an exterior unit normal vector. Then the coarea formula yields
$$
I=\int_{S^{d-1}}g(\nabla h_L(u))\left(\prod_{i=1}^{d-1}k_i(\nabla h_L(u),u)\right)^{-\frac{d}{d+1}}\, 
\mathcal{H}^{d-1}(du).
$$
Using Lemma 3.4 in \cite{Hug98}, we get
\begin{equation}\label{rep1}
I=\int_{S^{d-1}}g(\nabla h_L(u))D_{d-1}h_L(u)^{\frac{d}{d+1}}\,
\mathcal{H}^{d-1}(du).
\end{equation}
Now we consider also the projection $\pi_1:\NL\to\partial L$, $(x,u)\mapsto x$, which has the 
(approximate) Jacobian 
$$
J_{d-1}\pi_1(x,u)=\prod_{i=1}^{d-1}\frac{1}{\sqrt{1+k_i(x,u)^2}},
$$
for $\mathcal{H}^{d-1}$ almost all $(x,u)\in\NL$. A similar argument as before yields
\begin{align*}
I&=\int_{\NL}g(x)
\left(\prod_{i=1}^{d-1}k_i(x,u)\right)^{\frac{1}{d+1}}J_{d-1}\pi_1(x,u)\, 
\mathcal{H}^{d-1}(d(x,u))\\
&=\int_{\partial L}g(x)\left(\prod_{i=1}^{d-1}k_i(x,\sigma_L(x))
\right)^{\frac{1}{d+1}}\, \mathcal{H}^{d-1}(dx).
\end{align*}
By Lemma 3.1 in \cite{Hug98}, we finally also get
\begin{equation}\label{rep2}
I=\int_{\partial L}g(x)\kappa_L(x)^{\frac{1}{d+1}}\, \mathcal{H}^{d-1}(dx).
\end{equation}
A comparison of equations \eqref{rep1} and \eqref{rep2} gives the required equality.
$\Box$
\end{proof}

\bigskip

\noindent
{\bf Remark}  An alternative argument can be based on 
arguments similar to those used in \cite{Hug96a} for the proof of 
the equality of two representations of the affine surface area 
of a convex body.

\bigskip

\begin{lemma}\label{trafo2}
Let $K\subset\R^d$ be a convex body with $o\in\text{\rm int}(K)$. If  
$f:[0,\infty)\times S^{d-1}\to[0,\infty)$ is a measurable function and 
$\tilde{f}(x):=f\left(\|x\|^{-1},\|x\|^{-1}x\right)$, $x\in \partial K^*$, then 
$$
\int_{\partial K^*}\tilde{f}(x)\|x\|^{-d+1}\kappa^*(x)^{\frac{1}{d+1}}\, \Hde(dx)
=\int_{\partial K}f(h(K,\sigma_K(x)),\sigma_K(x))\kappa(x)^{\frac{d}{d+1}}\, \Hde(dx).
$$
\end{lemma}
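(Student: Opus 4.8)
The plan is to deduce Lemma~\ref{trafo2} from Lemma~\ref{trafo1} by choosing the measurable function $g$ on $\partial K^*$ appropriately and by relating the quantities attached to $K^*$ on the left-hand side of Lemma~\ref{trafo1} to quantities attached to $K$. First I would apply Lemma~\ref{trafo1} with $L:=K^*$ and with the measurable function
$$
g(x):=\tilde f(x)\,\|x\|^{-d+1},\qquad x\in\partial K^*,
$$
where $\tilde f(x)=f(\|x\|^{-1},\|x\|^{-1}x)$. This turns the left-hand side of the claimed identity into
$$
\int_{S^{d-1}}g(\nabla h_{K^*}(u))\,D_{d-1}h_{K^*}(u)^{\frac{d}{d+1}}\,\Hde(du).
$$
The core of the proof is then to rewrite this spherical integral as an integral over $\partial K$ using the classical duality between $K$ and $K^*$.

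The key duality facts I would invoke are the following. For $u\in S^{d-1}$ at which $h_{K^*}$ is differentiable, $\nabla h_{K^*}(u)$ is the boundary point of $K^*$ with exterior normal $u$; writing $x_0:=\nabla h_{K^*}(u)\in\partial K^*$, one has the standard polarity relations $\langle x_0,u\rangle = h_{K^*}(u)$ and, since $x_0\in\partial K^*$, the point $u/h_{K^*}(u)=u/\langle x_0,u\rangle$ lies on $\partial K$ and has $x_0/\|x_0\|$ as an exterior unit normal there (this is the symmetry of the normal bundle of $K$ and $K^*$ under the polarity map $(x,u)\mapsto(u/\langle x,u\rangle,\,x/\|x\|)$, cf.\ \cite{Hug98}). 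Hence $\|x_0\|=\rho(K^*,u)$, $h(K,x_0/\|x_0\|)=\langle u/\langle x_0,u\rangle, x_0/\|x_0\|\rangle^{-1}\cdot 1$ works out to $\|x_0\|^{-1}$... more precisely $h(K,\sigma_K(y))=\langle y,\sigma_K(y)\rangle$ with $y=u/h_{K^*}(u)$ and $\sigma_K(y)=x_0/\|x_0\|$, which gives $g(\nabla h_{K^*}(u))=f(\|x_0\|^{-1},\|x_0\|^{-1}x_0)\|x_0\|^{-d+1}=f(h(K,\sigma_K(y)),\sigma_K(y))\,\|x_0\|^{-d+1}$ after identifying $\|x_0\|^{-1}x_0$ with $\sigma_K(y)$ and $\|x_0\|^{-1}$ with $h(K,\sigma_K(y))$. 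Next, the curvature dictionary: the product of principal radii of curvature $D_{d-1}h_{K^*}(u)$ of $K^*$ in direction $u$ is related to the Gauss curvature $\kappa(y)$ of $K$ at the dual point $y=u/h_{K^*}(u)$ by a known formula of the shape
$$
D_{d-1}h_{K^*}(u)=\|x_0\|^{d+1}\,\langle u,x_0\rangle^{?}\,\kappa(y)^{-1}\cdots,
$$
and the clean way to obtain the exact exponents without bookkeeping errors is to run the whole computation on the common normal bundle, exactly as in the proof of Lemma~\ref{trafo1}: write both sides of the asserted identity as one integral over $\mathcal N(K)\cong\mathcal N(K^*)$ of generalized curvatures $k_i(x,u)$, using that the polarity map sends $(x,u)\in\mathcal N(K)$ to a point of $\mathcal N(K^*)$ whose generalized curvatures are explicit functions of $k_i(x,u)$, $\langle x,u\rangle$ and $\|x\|$, together with the Jacobians $J_{d-1}\pi_1$, $J_{d-1}\pi_2$ for $K$ and for $K^*$.

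Concretely, I would introduce the integral over $\mathcal N(K)$
$$
I:=\int_{\mathcal N(K)}f(\langle x,u\rangle,u)\prod_{i=1}^{d-1}\frac{k_i(x,u)^{\frac{1}{d+1}}}{\sqrt{1+k_i(x,u)^2}}\,\mathcal H^{d-1}(d(x,u)),
$$
evaluate it by projecting to $\partial K$ (this produces, via Lemma~3.1 in \cite{Hug98}, the right-hand side of Lemma~\ref{trafo2} up to the as-yet-unverified powers of $\langle x,u\rangle$ and $\|x\|$ that must be absorbed into the choice of integrand), and then transport the integral to $\mathcal N(K^*)$ via the polarity diffeomorphism, whose Jacobian and whose effect on the $k_i$ are classical; projecting the resulting integral to $\partial K^*$ and applying Lemma~3.1 in \cite{Hug98} again, i.e.\ really just applying Lemma~\ref{trafo1} to $K^*$, yields the left-hand side. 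Matching the two evaluations forces the identity, provided the homogeneity exponents of $\|x\|$ and $\langle x,u\rangle$ agree — which is exactly the claimed $\|x\|^{-d+1}$ weight on $\partial K^*$ against the weight $f(h(K,\sigma_K),\sigma_K)$ with $h(K,\sigma_K(x))=\langle x,\sigma_K(x)\rangle$ on $\partial K$.

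The main obstacle is purely the bookkeeping of homogeneity degrees: one must track carefully how $\kappa^*$ at $x\in\partial K^*$ relates to $\kappa$ at the dual point of $\partial K$ (a factor involving $\|x\|$ and $\langle x,\sigma\rangle$ to appropriate powers), how the surface measures $\Hde$ on $\partial K^*$ and on $\partial K$ transform under polarity, and how the exterior normal directions correspond, so that all the $\|x\|$- and $\langle x,u\rangle$-powers cancel to leave precisely $\|x\|^{-d+1}$ on one side and $\kappa^{d/(d+1)}$ versus $\kappa^{1/(d+1)}$ on the two sides. Since all these transformation rules are available in \cite{Hug98} (indeed the normal-bundle computation in the proof of Lemma~\ref{trafo1} already isolates the relevant Jacobians), the argument is a careful substitution; alternatively, as in the Remark following Lemma~\ref{trafo1}, one can mirror the computation in \cite{Hug96a} comparing two representations of the affine surface area, where the same duality between $\Omega_1(K^*)$ and $\Omega_{d^2}(K)$ is established and the exponent bookkeeping has been carried out. $\Box$
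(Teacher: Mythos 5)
Your opening move is the same as the paper's: apply Lemma \ref{trafo1} with $L=K^*$ and $g(x)=\tilde f(x)\|x\|^{-d+1}$, and your point--normal--support dictionary under polarity (with $x_0=\nabla h_{K^*}(u)$, $y=u/h_{K^*}(u)\in\partial K$, $\sigma_K(y)=x_0/\|x_0\|$, $h(K,\sigma_K(y))=\|x_0\|^{-1}$) is correct. But after that the decisive quantitative content of the lemma is missing from your argument. The paper finishes in two concrete steps: it invokes the polarity--curvature relation $D_{d-1}h_{K^*}(u)^{\frac{d}{d+1}}=\kappa(x)^{\frac{d}{d+1}}\langle u,\sigma_K(x)\rangle^{-d}$ for $\Hde$ almost all $u\in S^{d-1}$, where $x=\rho(K,u)u\in\partial K$ (Theorem 2.2 in \cite{Hug96b}, or Corollary 5.1 in \cite{Hug2002}), and then changes variables via the radial map $T\colon S^{d-1}\to\partial K$, $u\mapsto\rho(K,u)u$, whose Jacobian is $JT(u)=\|\nabla h_{K^*}(u)\|\,h_{K^*}(u)^{-d}$; the remaining factors cancel using $h_{K^*}\equiv 1$ on $\partial K$, $\|\nabla h_{K^*}(x)\|^{-1}=\langle x,\sigma_K(x)\rangle=h(K,\sigma_K(x))$ and $\nabla h_{K^*}(x)/\|\nabla h_{K^*}(x)\|=\sigma_K(x)$.

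In your proposal exactly these ingredients are left open: your ``curvature dictionary'' contains an undetermined exponent, you explicitly postpone the homogeneity bookkeeping, and the normal-bundle computation that is supposed to settle it is only a plan --- you never state how the generalized curvatures $k_i(x,u)$ and the measure $\mathcal{H}^{d-1}$ on $\mathcal{N}(K)$ transform under the polarity bijection $(x,u)\mapsto\bigl(u/\langle x,u\rangle,\,x/\|x\|\bigr)$, nor which weights in $\|x\|$ and $\langle x,u\rangle$ must be inserted into your integral $I$ so that its two projections yield precisely the two sides of Lemma \ref{trafo2}. Since the assertion of the lemma is exactly that the exponents come out as $\|x\|^{-d+1}$ on $\partial K^*$ against $\kappa^{\frac{d}{d+1}}$ versus $\kappa^{\frac{1}{d+1}}$, concluding that ``matching the two evaluations forces the identity, provided the homogeneity exponents agree'' assumes what is to be proved. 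Note also that the transformation rules you appeal to are not in \cite{Hug98}, which only supplies the normal-bundle identities already used in Lemma \ref{trafo1}; the curvature relation between a body and its polar is taken from \cite{Hug96b} (or \cite{Hug2002}). Your normal-bundle strategy could in principle be carried through (it is essentially how the cited curvature relation is proved), but as written the proof has a genuine gap at its core.
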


\begin{proof} We apply Lemma \ref{trafo1} with $L=K^*$ and 
$g(x)=\tilde{f}(x)\|x\|^{-d+1}$, $x\in \partial K^*$, 
and thus we get
\begin{align*}
&\int_{\partial K^*}\tilde{f}(x)\|x\|^{-d+1}\kappa^*(x)^{\frac{1}{d+1}}\, \Hde(dx)\\
&\qquad =
\int_{S^{d-1}}\tilde{f}(\nabla h_{K^*}(u))\|\nabla h_{K^*}(u)\|^{-d+1}
D_{d-1}h_{K^*}(u)^{\frac{d}{d+1}}\, \Hde(du).
\end{align*}
Next we apply Theorem 2.2 in \cite{Hug96b} (or the second part of 
Corollary 5.1 in \cite{Hug2002}). Thus, using the fact that, for $\Hde$ almost all $u\in S^{d-1}$, 
$h_{K^*}$ is differentiable in the generalized sense at $u$ and $\rho(K,u)u$ is a normal 
boundary point of $K$, we have
$$
D_{d-1}h_{K^*}(u)^{\frac{d}{d+1}}=\kappa(x)^{\frac{d}{d+1}}\langle u,\sigma_K(x)\rangle^{-d},
$$
where $x=\rho(K,u)u\in\partial K$ and $u=\|x\|^{-1}x\in S^{d-1}$. Thus we obtain
\begin{align*}
&\int_{\partial K^*}\tilde{f}(x)\|x\|^{-d+1}\kappa^*(x)^{\frac{1}{d+1}}\, \Hde(dx)\\
&\qquad =
\int_{S^{d-1}}\tilde{f}(\nabla h_{K^*}(u))\frac{\|\nabla h_{K^*}(u)\|^{-d+1}}{\langle u,\sigma_K(\rho(K,u)u)\rangle^d}\,\kappa(\rho(K,u)u)^{\frac{d}{d+1}}\, \Hde(dx).
\end{align*}
The bijective and bilipschitz transformation $T:S^{d-1}\to \partial K$, $u\mapsto \rho(K,u)u$, 
has the Jacobian
$$
JT(u)=\frac{\|\nabla h_{K^*}(u)\|}{h_{K^*}(u)^d}
$$
for $\Hde$ almost all $u\in S^{d-1}$ (see the proof of Lemma 2.4 in \cite{Hug96b}). Thus
\begin{align*}
&\int_{\partial K^*}\tilde{f}(x)\|x\|^{-d+1}\kappa^*(x)^{\frac{1}{d+1}}\, \Hde(dx)\\
&\qquad =
\int_{\partial K}\tilde{f}\left(\nabla h_{K^*}\left(\frac{x}{\|x\|}\right)\right)
\frac{\|\nabla h_{K^*}\left(\frac{x}{\|x\|}\right)\|^{-d}}{\langle \frac{x}{\|x\|},\sigma_K(x)\rangle^d}h_{K^*}\left(\frac{x}{\|x\|}\right)^d
\kappa(x)^{\frac{d}{d+1}}\, \Hde(dx)\\
&\qquad =\int_{\partial K}\tilde{f}\left(\nabla h_{K^*}\left({x}\right)\right)
\frac{\|\nabla h_{K^*}\left(x\right)\|^{-d}}{\langle x,\sigma_K(x)
\rangle^d}h_{K^*}\left(x\right)^d
\kappa(x)^{\frac{d}{d+1}}\, \Hde(dx)\\
&\qquad =\int_{\partial K}{f}(\|\nabla h_{K^*}(x)\|^{-1}, \nabla h_{K^*}(x)/\|\nabla h_{K^*}(x)\| )
\kappa(x)^{\frac{d}{d+1}}\, \Hde(dx),\\
&\qquad =\int_{\partial K}{f}(h_K(\sigma_K(x)),\sigma_K(x))
\kappa(x)^{\frac{d}{d+1}}\,\Hde(dx),\\
\end{align*}
since $h_{K^*}(x)=1$ for $x\in\partial K$ and $x^*:=\nabla h_{K^*}(x)$ satisfies 
$\|x^*\|^{-1}=\langle x,\sigma_K(x)\rangle$ and $x^*/\|x^*\|=\sigma_K(x)$, for $\Hde$ 
almost all $x\in \partial K$. $\Box$
\end{proof}

\vspace*{1cm}

\noindent{\bf Acknowledgement } We are grateful for 
stimulating discussions with Rolf Schneider.

\vspace{3mm}

\small

\noindent K\'aroly J. B\"or\"oczky,\newline 
Alfr\'ed R\'enyi Institute of Mathematics
Hungarian Academy of Sciences,\newline 
 PO Box 127,
H--1364 Budapest, Hungary\newline
{\it E-mail:}
carlos@renyi.hu\newline
and\newline
Department of Geometry, Roland E\"otv\"os University\newline
P\'azm\'any P\'eter s\'et\'any 1/C,
H-1117 Budapest, Hungary\newline

\noindent Ferenc Fodor\newline
Department of Geometry, University of Szeged,\newline
Aradi v\'ertan\'uk tere 1, H-6720 Szeged, Hungary\newline
{\it E-mail:} fodorf@math.u-szeged.hu\newline
and\newline
Department of Mathematics and Statistics\newline
University of Calgary\newline
2500 University Dr. N.W.\newline
Calgary, Alberta, Canada\newline
T2N 1N4\newline
{\it Email:} fodor@math.ucalgary.ca\newline

\noindent Daniel Hug,\newline 
Institut f\"ur Algebra und Geometrie, 
Universit\"at Karlsruhe (TH), KIT, \newline 
D-76133 Karlsruhe, 
Germany \newline
{\it E-mail:} daniel.hug@kit.edu

\end{document}